\documentclass[11pt]{amsart}

\usepackage{amsthm}
\usepackage{amssymb}
\usepackage{amsfonts}
\usepackage{thmtools}
\usepackage{booktabs}
\usepackage{fullpage}
\usepackage{color}
\usepackage{enumitem}
\usepackage{tikz}
\usepackage[english]{babel}
\usepackage{microtype}

\definecolor{nicegreen}{RGB}{0,158,115}
\definecolor{nicered}{RGB}{213,94,0}
\definecolor{niceblue}{RGB}{0,114,178}
\definecolor{niceorange}{RGB}{230,159,0}
\definecolor{nicepink}{RGB}{204,121,167}
\definecolor{nicesky}{RGB}{86,180,233}
\definecolor{niceyellow}{RGB}{230,218,56}

\usetikzlibrary{decorations.pathmorphing,decorations.pathreplacing,calligraphy}

\usepackage[
defernumbers=true,
maxbibnames=6,
backend=biber,
style=alphabetic,
giveninits=true,
sortcites=false,
maxalphanames=4,
minalphanames=3,
doi=true,
isbn=false,
url=false,
safeinputenc,
natbib,
sorting=nyt,
]{biblatex}
\usepackage[colorlinks,allcolors=niceblue]{hyperref}
\usepackage[capitalise,english,noabbrev]{cleveref}

\newtheorem{thm}{Theorem}
\newtheorem{lmm}[thm]{Lemma}
\newtheorem{crl}[thm]{Corollary}
\newtheorem{prp}[thm]{Proposition}
\newtheorem{fct}[thm]{Fact}

\theoremstyle{definition}
\newtheorem{dfn}[thm]{Definition}

\newtheorem{rmk}[thm]{Remark}
\newtheorem{qst}[thm]{Question}
\crefname{thm}{Theorem}{Theorems}

\newcommand{\ts}{\textstyle}

\let\phi\varphi
\let\epsilon\varepsilon
\let\emptyset\varnothing
\let\subset\subseteq
\let\supset\supseteq
\let\nsubset\nsubseteq

\newcommand{\cal}{\mathcal}
\newcommand{\bb}{\mathbb} 
\renewcommand{\bf}{\mathbf}
\renewcommand{\sf}{\mathsf}
\renewcommand{\rm}{\mathrm}
\newcommand{\fr}{\mathfrak}

\newcommand{\emp}{\emptyset}
\renewcommand{\Cap}{\bigcap}
\renewcommand{\Cup}{\bigcup}

\newcommand{\pto}{\rightharpoonup}
\newcommand{\md}{\vDash} 
\newcommand{\fc}{\Vdash} 
\newcommand{\imp}{\Rightarrow}
\newcommand{\pmi}{\Leftarrow}

\newcommand{\leftrightdelim}[4][]{\def\leftrightdelimparamunused{#1}\ifx\leftrightdelimparamunused\empty\mathopen{}\left#2#4\right#3\mathclose{}\else\mathopen{}\big#2{#4}\big#3\mathclose{}\fi}
\newcommand{\leftrightseparator}[1][]{\def\leftrightseparatorparamunused{#1}\ifx\leftrightseparatorparamunused\empty~\middle|~\else~\big|~\fi}

\newcommand{\ac}[2][]{\leftrightdelim[#1]{\{}{\}}{#2}}
\newcommand{\sqb}[2][]{\leftrightdelim[#1]{[}{]}{#2}}
\newcommand{\card}[2][]{\leftrightdelim[#1]{|}{|}{#2}}
\newcommand{\ab}[2][]{\leftrightdelim[#1]{\langle}{\rangle}{#2}}

\newcommand{\set}[3][]{\ac[#1]{#2\leftrightseparator[#1]#3}}
\newcommand{\abset}[3][]{\ab[#1]{#2\leftrightseparator[#1]#3}}

\newcommand{\iv}[3][]{\leftrightdelim[#1]{[}{)}{#2,#3}}
\newcommand{\ap}[1]{\text{\normalfont``}#1\text{\normalfont''}} 

\newcommand{\dom}{\mathrm{dom}}
\newcommand{\ran}{\mathrm{ran}}
\newcommand{\ot}{\mathrm{ot}}
\newcommand{\cf}{\mathrm{cf}}
\newcommand{\cov}{\mathrm{cov}}
\newcommand{\non}{\mathrm{non}}
\newcommand{\cof}{\mathrm{cof}}
\newcommand{\add}{\mathrm{add}}
\newcommand{\Ord}{\bf{Ord}}

\newcommand{\omcs}{{}^{\omega}2}
\newcommand{\omom}{{}^{\omega}\omega}
\newcommand{\mucs}{{}^{\mu}2}
\newcommand{\kacs}{{}^{\kappa}2}
\newcommand{\mumu}{{}^{\mu}\mu}
\newcommand{\kamu}{{}^{\kappa}\mu}
\newcommand{\muka}{{}^{\mu}\kappa}
\newcommand{\kaka}{{}^{\kappa}\kappa}
\newcommand{\lamu}{{}^{\lambda}\mu}
\newcommand{\laka}{{}^{\lambda}\kappa}
\newcommand{\lala}{{}^{\lambda}\lambda}
\newcommand{\lanu}{{}^{\lambda}\nu}
\newcommand{\lacs}{{}^{\lambda}2}
\newcommand{\nuka}{{}^{\nu}\kappa}
\newcommand{\nucs}{{}^{\nu}2}
\newcommand{\derh}{{}^{\delta}\rho}

\newcommand{\fmumu}{{}^{<\mu}\mu}
\newcommand{\flanu}{{}^{<\lambda}\nu}
\newcommand{\fkamu}{{}^{<\kappa}\mu}
\newcommand{\fderh}{{}^{<\delta}\rho}

\newcommand{\spshh}[3]{{#1}_{(#2,#3)}}

\newcommand{\Mkamuka}{\spshh{\cal M}{\kamu}{\kappa}}
\newcommand{\Mmumubd}{\spshh{\cal M}{\mumu}{\rm{bd}}}
\newcommand{\Mmumumu}{\spshh{\cal M}{\mumu}{\mu}}
\newcommand{\Mmucsmu}{\spshh{\cal M}{\mucs}{\mu}}
\newcommand{\Mmumuka}{\spshh{\cal M}{\mumu}{\kappa}}
\newcommand{\Mmukaka}{\spshh{\cal M}{\muka}{\kappa}}
\newcommand{\Mmucska}{\spshh{\cal M}{\mucs}{\kappa}}
\newcommand{\Mkacska}{\spshh{\cal M}{\kacs}{\kappa}}
\newcommand{\Mkakaka}{\spshh{\cal M}{\kaka}{\kappa}}
\newcommand{\MXtau}{\spshh{\cal M}{X}{\tau}}
\newcommand{\MYsigma}{\spshh{\cal M}{Y}{\sigma}}
\newcommand{\MXbd}{\spshh{\cal M}{X}{\rm{bd}}}

\newcommand{\Cmukaka}{\spshh{\bb C}{\muka}{\kappa}}
\newcommand{\Cmucska}{\spshh{\bb C}{\mucs}{\kappa}}
\newcommand{\Ckacska}{\spshh{\bb C}{\kacs}{\kappa}}
\newcommand{\Cnucsnu}{\spshh{\bb C}{\nucs}{\nu}}
\newcommand{\CXtau}{\spshh{\bb C}{X}{\tau}}

\newcommand{\dmukabd}{\spshh{\fr d}{\muka}{\rm{bd}}}
\newcommand{\dkakabd}{\spshh{\fr d}{\kaka}{\rm{bd}}}
\newcommand{\dmukamu}{\spshh{\fr d}{\muka}{\mu}}
\newcommand{\dkakaka}{\spshh{\fr d}{\kaka}{\kappa}}
\newcommand{\dmukaka}{\spshh{\fr d}{\muka}{\kappa}}
\newcommand{\dkakaal}{\spshh{\fr d}{\kaka}{\rm{all}}}
\newcommand{\dmukaal}{\spshh{\fr d}{\muka}{\rm{all}}}
\newcommand{\dlakaal}{\spshh{\fr d}{\laka}{\rm{all}}}
\newcommand{\dnukaal}{\spshh{\fr d}{\nuka}{\rm{all}}}
\newcommand{\dlakala}{\spshh{\fr d}{\laka}{\lambda}}

\newcommand{\bmukabd}{\spshh{\fr b}{\muka}{\rm{bd}}}
\newcommand{\bkakabd}{\spshh{\fr b}{\kaka}{\rm{bd}}}
\newcommand{\bmukamu}{\spshh{\fr b}{\muka}{\mu}}
\newcommand{\bmukaka}{\spshh{\fr b}{\muka}{\kappa}}
\newcommand{\bkakaka}{\spshh{\fr b}{\kaka}{\kappa}}
\newcommand{\bmukaal}{\spshh{\fr b}{\muka}{\rm{all}}}
\newcommand{\bkakaal}{\spshh{\fr b}{\kaka}{\rm{all}}}

\newcommand{\relplaceholder}{\bullet}
\newcommand{\dXph}{\spshh{\fr d}{X}{\relplaceholder}}
\newcommand{\bXph}{\spshh{\fr b}{X}{\relplaceholder}}
\newcommand{\dXka}{\spshh{\fr d}{X}{\kappa}}
\newcommand{\bXka}{\spshh{\fr b}{X}{\kappa}}
\newcommand{\dXmu}{\spshh{\fr d}{X}{\mu}}
\newcommand{\bXmu}{\spshh{\fr b}{X}{\mu}}
\newcommand{\dXbd}{\spshh{\fr d}{X}{\rm{bd}}}
\newcommand{\bXbd}{\spshh{\fr b}{X}{\rm{bd}}}
\newcommand{\dXal}{\spshh{\fr d}{X}{\rm{all}}}
\newcommand{\bXal}{\spshh{\fr b}{X}{\rm{all}}}
\newcommand{\bXnu}{\spshh{\fr b}{X}{\nu}}

\newcommand{\code}{\sf{code}}
\newcommand{\Code}{\sf{Code}}
\newcommand{\decode}{\sf{decode}}
\newcommand{\Decode}{\sf{Decode}}
\newcommand{\dcset}{\sf{set}}
\newcommand{\Set}{\sf{Set}}

\newcommand{\restr}{\mathord{\restriction}}

\newcommand{\append}{{}^\frown}
\newcommand{\altappend}{\kern1pt{}^\land\kern1pt}

\title{Topology and Category for Singular Product Spaces}
\author{Yusuke Hayashi}
\address{
	Graduate School of System Informatics,
	Kobe University,
	1-1, Rokkodai-cho, Nada-ku, 657-8501, Kobe, 
	Japan
}
\email{219x504x@stu.kobe-u.ac.jp}
\author{Tristan van der Vlugt}
\address{%
	Institut für Diskrete Mathematik und Geometrie, 
	Technische Universität Wien,
	Wiedner Hauptstrasse 8-10/104,
	1040 Wien,
	Austria
}
\email{tristan@tvdvlugt.nl}

\thanks{The second author was funded by the Austrian Science Fund (FWF) [grants PAT8484324 and P33895] during the writing of this article.}

\begin{document}

	\begin{abstract}
		For $\kappa$ a regular uncountable cardinal, the higher Baire and Cantor spaces $\kaka$ and $\kacs$ (endowed with the ${<}\kappa$-box topology) have been relatively well-studied, but less is known about the case where $\kappa$ is singular. We will consider several spaces of functions and box topologies that could serve as higher Baire and Cantor spaces for singular cardinals. The ultimate focus of the article lies in studying cardinal characteristics of the ideal of $\kappa$-meagre subsets of these spaces.
	\end{abstract}
	
	\maketitle
	
	\section{Introduction}
	
	The study of topological and combinatorial properties of the continuum forms one of the classical cornerstones of both set theory and general topology. For combinatorial and set theoretic purposes, it is often simpler to work with zero-dimensional spaces; rather than working with the real line directly, it is a common practice to work with the Cantor space ($\omcs$, or the set of functions from the natural numbers $\omega$ to the two-element set $2=\ac{0,1}$) and the Baire space ($\omom$, of functions on the natural numbers), to such extent that both of these spaces are frequently referred to as ``the reals''. 
	
	One may generalise these two product spaces by replacing the natural numbers $\omega$ with an uncountable cardinal $\kappa$, for instance to obtain the space $\kacs$ with the ${<}\kappa$-box topology (which is the least $\kappa$-additive topology extending the product topology), known as the \emph{higher Cantor space}, or \emph{generalised Cantor space}. The higher Cantor space and related spaces have seen a significant amount of interest in recent years, but the subject goes back the better part of a century. In 1950, \citet{Sikorski50} laid the topological foundation for $\kappa$-additive spaces of weight $\kappa$, among which the higher Cantor space $\kacs$ under the assumption that $\kappa=\kappa^{<\kappa}$. Many familiar concepts of the classical reals, such as Borel sets, metrics and Baire category, were generalised to regular uncountable $\kappa$ by Sikorski, producing the higher analogues of $\kappa$-Borel sets, $\omega_\mu$-metrics and $\kappa$-Baire category. Moreover, Sikorski remarks the surprising fact that the higher Cantor space $\kacs$ and higher Baire space $\kaka$ are homeomorphic when $\kappa$ is not strongly inaccessible; this stands in contrast against our classical intuition, since $\omcs$ is compact, whereas $\omom$ is not. Subsequently, \citet{HungNegrepontis73} improved Sikorski's result and showed that, for regular uncountable $\kappa$, $\kacs$ and $\kaka$ are homeomorphic if and only if $\kappa$ is not a weakly compact cardinal (a large cardinal notion that is strictly stronger than inaccessibility).
	
	In many cases, the knowledge and intuition we have about set theory of the classical Baire space transfers to the higher context. In this article we will be particularly interested in generalising the classical Cicho\'n diagram (see \cite[Chapter 2]{BartoszynskiJudah95}). This diagram consists of ten cardinal characteristics of the continuum, that is, cardinalities that lie between $\aleph_1$ and the size of the continuum $2^{\aleph_0}$. We will not define these at this point, but we will show the diagram and its higher generalisation:

	\begin{center}
		\begin{tikzpicture}[xscale=1.9, yscale=1]
			\node (cN) at (1,2) {$\rm{cov}(\cal N)$};
			\node (nM) at (2,2) {$\rm{non}(\cal M)$};
			\node (b) at (2,1) {$\fr b$};
			\node (d) at (3,1) {$\fr d$};
			\node (cM) at (3,0) {$\rm{cov}(\cal M)$};
			\node (nN) at (4,0) {$\rm{non}(\cal N)$};
			\node (aM) at (2,0) {$\rm{add}(\cal M)$};
			\node (fM) at (3,2) {$\rm{cof}(\cal M)$};
			\node (aN) at (1,0) {$\rm{add}(\cal N)$};
			\node (fN) at (4,2) {$\rm{cof}(\cal N)$};
			
			\draw (aN) edge[->] (cN);
			\draw (cN) edge[->] (nM);
			\draw (b) edge[->] (nM);
			\draw (b) edge[->] (d);
			\draw (cM) edge[->] (d);
			\draw (cM) edge[->] (nN);
			\draw (nN) edge[->] (fN);
			\draw (aN) edge[->] (aM);
			\draw (aM) edge[->] (b);
			\draw (aM) edge[->] (cM);
			\draw (nM) edge[->] (fM);
			\draw (d) edge[->] (fM);
			\draw (fM) edge[->] (fN);
			
			\node (hnM) at (6,2) {$\rm{non}(\cal M_\kappa)$};
			\node (hb) at (6,1) {$\fr b_\kappa$};
			\node (hd) at (7.25,1) {$\fr d_\kappa$};
			\node (hcM) at (7.25,0) {$\rm{cov}(\cal M_\kappa)$};
			\node (haM) at (6,0) {$\rm{add}(\cal M_\kappa)$};
			\node (hfM) at (7.25,2) {$\rm{cof}(\cal M_\kappa)$};
			
			\draw (hb) edge[->] (hnM);
			\draw (hb) edge[->] (hd);
			\draw (hcM) edge[->] (hd);
			\draw (haM) edge[->] (hb);
			\draw (haM) edge[->] (hcM);
			\draw (hnM) edge[->] (hfM);
			\draw (hd) edge[->] (hfM);

			\node at (2.5,-0.8) {Classical Cicho\'n diagram};
			\node at (6.625,-.8) {Higher Cicho\'n diagram for $\kappa=\kappa^{<\kappa}$};
			
		\end{tikzpicture}
	\end{center}
	
	The Cicho\'n diagram provides an ordering between cardinal characteristics, where an arrow $\fr x\to\fr y$ implies that $\mathsf{ZFC}$ proves the inequality $\fr x\leq\fr y$. The diagram is complete, because a strict inequality between any two of the cardinal characteristics that does not contradict the diagram is consistent relative to the consistency of $\mathsf{ZFC}$.
	
	\citet{CummingsShelah95} gave an early example of higher analogues of $\fr b$ and $\fr d$ to the higher Baire space $\kaka$ for regular uncountable $\kappa$. More recently, \citet{BrendleBrookeTaylorFriedmanMontoya18} gave a comprehensive overview of the cardinal invariants of the $\kappa$-meagre ideal $\cal M_\kappa$ for a regular uncountable cardinal $\kappa$ satisfying $\kappa=\kappa^{<\kappa}$, which is drawn above. \citet{Brendle22} also studied the Cicho\'n diagram under the assumption that $\kappa$ is regular uncountable with $\kappa<2^{<\kappa}$.

	Much less studied is what happens if we relax the requirement that $\kappa$ is regular, and work with the higher Baire space $\mumu$ where we assume $\mu$ is a singular cardinal. In the literature, the topology and combinatorial set theory of singular higher Baire spaces have been sporadically addressed under very specific circumstances. Early work by \citet{Stone62} established that, for singular cardinals $\mu$ with countable cofinality, the product space ${}^\omega\mu$ is homeomorphic to the product space $\prod_{n\in\omega}\mu_n$, where $\abset{\mu_n}{n\in\omega}$ is an increasing sequence of cardinals cofinal in~$\mu$. 
	The study of the cardinals of the Cicho\'n diagram to singular higher product spaces has also seen some coverage. \citet{Miller82} and \citet{Landver92,Landver93} studied the covering number of the nowhere dense ideal (and thus of Baire category) for higher product spaces $\kacs$, including for singular $\kappa$ (cf.\ \cref{thm:LandverMiller}), and the dominating number $\fr d_\mu$ on the space $\mumu$ for singular $\mu$ has been studied by \citet{Shelah19}, \citet{GartiShelah20}, \citet{LambieHanson23} and \citet{Hayashi26}.
	
	One particularly active area of research concerned with the higher Baire space for singular cardinals, is generalised descriptive set theory. Although descriptive set theory has been studied for regular cardinals, e.g.\ in \cite{MeklerVaananen93,FriedmanHyttinenKulikov14,AgostiniMottoRosSchlicht23}, the situation is especially interesting for singular cardinals with countable cofinality: the product space ${}^\omega\mu$ that was introduced by \citeauthor{Stone62} has a particularly rich $\mu$-Borel structure and is metrisable. This space has been studied intensely in the last decade by \citet{DimonteMottoRos25} and their co-authors. Although our article does not treat definability and regularity properties, we would like to mention the thematically related study of the Baire property in ${}^\omega\mu$ and its relation to rank-into-rank cardinals, cf.\ \cite{DimonteIannellaLucke25,DimontePovedaThei24}.
	
	Although for descriptive set theoretic purposes, the space ${}^{\omega}\mu$ for singular cardinals $\mu$ with countable cofinality undeniably seems to produce the most interesting results (see \cite{DimonteMottoRos25}), it is \emph{a~priori} not obvious whether other choices of product space or topology can be interesting from the perspective of cardinal characteristics, or indeed of the Cicho\'n diagram. We show that other spaces than ${}^\omega\mu$ are indeed interesting. We will define a collection of twelve potential spaces, each of which could be claimed to be a generalisation of the classical Cantor or Baire space. Our first contribution to the topic is topological in nature, as we will provide a complete overview of the conditions under which these twelve spaces are homeomorphic to one another. Our second contribution is to study the cardinal characteristics of the (higher) meagre ideal associated to each of these spaces. Indeed, the choice of space is significant, and we will show that it leads to distinct cardinal characteristics.

	\subsection{Overview of this paper}\label{sec:Structure} 
	
	In \cref{sec:Topology} we will consider several spaces that consist of functions on ordinals equipped with different generalisations of the product topology. We will give a full overview of the conditions under which these spaces are homeomorphic to each other. In order to do this, we compute some topological invariants of these spaces, such as their weight and character. The remainder of this article will study certain cardinal characteristics on each of the spaces defined in \cref{sec:Topology}. 
	
	In \cref{sec:Covering} we recall the notion of meagreness, we generalise it to the higher context and treat its additivity and covering numbers. In most of our spaces the additivity and covering numbers equal $\kappa^+$ (that is, the values are absolute). In \cref{sec:Uniformity} we look at the uniformity numbers. We show that these are highly dependent on the choice of space. We give several consistency results using the method of forcing to prove that several of the uniformity numbers are independent of each other and of their respective lower and upper bounds. In \cref{sec:Dominating} we briefly review the dominating and unbounding numbers for singular cardinals. Finally, in \cref{sec:Cofinality} we discuss a lower bound for the cofinality numbers in some of our spaces, modifying a result by \citet{Brendle22} to give a connection between the cofinality numbers and certain dominating numbers. As a consequence we obtain the consistency of $2^\mu$ being strictly smaller than some of the cofinality numbers.

	\subsection{Preliminaries}\label{sec:Preliminaries}
	Throughout this article, we will always assume that $\kappa$ is a regular cardinal and that $\mu$ is a singular cardinal with $\cf(\mu)=\kappa$. Note bene: this assumption will not be stated in theorems or lemmas.
	
	Most of our notation agrees with the notation in standard references of set theory, such as \cite{Jech03,Kunen11}. We will use greek letters both to denote cardinal numbers and ordinal numbers, where $\kappa,\lambda,\mu,\nu,\theta$ depict cardinal numbers (and as mentioned $\kappa$ and $\mu$ have fixed roles), whereas $\alpha,\beta,\gamma,\delta,\eta,\zeta,\xi$ usually depict ordinal numbers. As is usual in set-theoretic writing, we may interpret cardinal numbers as ordinal numbers, and we do not make a semantical distinction between $\alpha<\beta$ and $\alpha\in\beta$. There exists some ambiguity between cardinal and ordinal arithmetic, but we assume the reader understands which type is used based on context and what makes sense.

	For two cardinal numbers $\lambda,\nu$, we distinguish between $\lanu$ and $\nu^\lambda$, the former being the set of functions from $\lambda$ to $\nu$ and the latter being the cardinality of such a set of functions. We also write $\flanu$ for the set of functions $\Cup_{\alpha\in\lambda}{}^\alpha\nu$. A function $f$ is seen as a family of pairs containing exactly one pair $(x,y)$ for each $x$ in the domain $\dom(f)$. Given $f\colon X\to Y$ and $A\subset X$, we write $f\restr A$ for the restriction of $f$ to the domain $A$. Such functions $f\restr A$ are also called \emph{partial functions} from $X$ to $Y$, and we write $f\colon X\pto Y$ if $f$ is a partial function (that is, if there is $A\subset X$ such that $f\colon A\to Y$). By  $\rm{Fn}_{\lambda}(X,Y)$ we denote the family of partial functions from $X$ to $Y$ with a domain of size ${<}\lambda$. If $s\in{}^\alpha\nu$ and $t\in{}^\beta\nu$ for some ordinals $\alpha,\beta$, then $s\append t\in{}^{\alpha+\beta}\nu$ denotes the concatenation of $s$ with $t$.
	We write $[\nu]^\lambda$ for the family of subsets of $\nu$ of size $\lambda$, and similarly for $[\nu]^{<\lambda}$.
	
	At several points in this article, we will need the following notion from cardinal arithmetic.
	\begin{dfn}\label{dfn:cf covering numbers}
		For $\theta\leq\nu$ infinite cardinalities, we define $\cf(\sqb\theta^{<\nu})$ as the least size of a $\subset$-cofinal subset of $\sqb\theta^{<\nu}$, that is, the least size of a family $\cal F\subset\sqb\theta^{<\nu}$ such that for every $X\in\sqb\theta^{<\nu}$ there exists some $Y\in\cal F$ with $X\subset Y$. We write $\cf(\sqb\theta^\nu)$ for $\cf(\sqb\theta^{<\nu^+})$.
	\end{dfn}
	These cardinal characteristics are special cases of \emph{covering numbers} from cardinal arithmetic, and in the notation of \cite{Shelah94} we have $\cf(\sqb\theta^{<\nu})=\cov(\theta,\nu,\nu,2)$. We refer to \cite[\S\,5]{Shelah94} for an extensive overview of these cardinal characteristics, but remark the following basic facts.
	\begin{fct}Let $\mu$ be singular and $\cf(\mu)=\kappa$.
		\begin{enumerate}[label=(\arabic*)]
			\item\label{lem:cf mu kappa facts:1} $\mu\leq\cf(\sqb\mu^{<\kappa})\leq \mu^{<\kappa}$ and $\mu^+\leq\cf(\sqb\mu^{\kappa})\leq \mu^\kappa$ and $\mu^+\leq\cf(\sqb\mu^{<\mu})\leq \mu^{<\mu}$.
			\item $\mu^\kappa=2^\kappa\cdot\cf(\sqb\mu^\kappa)$.
			\item $\cf(\sqb\kappa^{<\kappa})=\kappa$.
		\end{enumerate}
	\end{fct}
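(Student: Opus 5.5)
We prove the three items separately, each by elementary cardinal arithmetic together with the definition of $\cf(\sqb\theta^{<\nu})$ from \cref{dfn:cf covering numbers}.

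\textbf{Item (1).} The upper bounds are trivial: $\sqb\mu^{<\kappa}$ is cofinal in itself and has size $\mu^{<\kappa}$, and likewise for the other two families. For the lower bounds the plan is as follows.

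First, $\cf(\sqb\mu^{<\kappa})\geq\mu$. Any cofinal family must cover every singleton, so its union is all of $\mu$. Each member has size ${<}\kappa\leq\mu$, so a family of size ${<}\mu$ would have union of size ${<}\mu$ when $\kappa<\mu$. This is a contradiction, and $\kappa<\mu$ holds because $\mu$ is singular.

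Second, $\cf(\sqb\mu^\kappa)\geq\mu^+$, by a diagonal argument. Suppose $\ab{Y_\xi\mid\xi<\mu}$ is a family of sets of size ${\leq}\kappa$. Fix cardinals $\mu_i$ for $i<\kappa$, increasing and cofinal in $\mu$. For each $i$, the sets $Y_\xi$ with $\xi<\mu_i$ together have union of size ${\leq}\mu_i\cdot\kappa<\mu$. So we can choose $x_i\in\mu$ outside this union, and also outside $\{x_j\mid j<i\}$. Let $X=\{x_i\mid i<\kappa\}$, which has size $\kappa$. If $X\subset Y_\xi$, then $\xi<\mu_i$ for some $i$, and then $x_i\notin Y_\xi$, a contradiction. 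Hence no family of size $\mu$ is cofinal.

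Third, $\cf(\sqb\mu^{<\mu})\geq\cf(\sqb\mu^\kappa)\geq\mu^+$. Take a family $\cal F$ cofinal in $\sqb\mu^{<\mu}$; this is the step needing most care. For each $Y\in\cal F$, the trace $[Y]^{\leq\kappa}$ can be large, so we avoid reducing to the previous bound. Instead we rerun the diagonalisation directly. Against $\ab{Y_\xi\mid\xi<\mu}$ with $|Y_\xi|<\mu$, at stage $i$ the union over $\xi<\mu_i$ still has size ${<}\mu$? Not necessarily, since sets of size ${<}\mu$ can have union of size $\mu$.

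So instead we diagonalise against each $Y_\xi$ individually, using a set of size $\kappa$ chosen on disjoint blocks. Partition $\mu$ into $\kappa$ blocks $B_i=[\mu_i,\mu_{i+1})$. Given $\mu$ sets $Y_\xi$, enumerate them in type $\mu$ and assign to each $i$ the $\xi<\mu_i$ in a way that every $\xi$ is handled. Every $Y_\xi$ has size ${<}\mu$, hence size ${<}|B_i|$ for all large $i$. We then build $X$ of size ${<}\mu$ (size $\mu_i$ pieces are not allowed, so we take $X$ of size $\kappa$) meeting, for each $\xi$, some block $B_i$ with $i$ large, at a point outside $Y_\xi$.

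Concretely, fix a bijection $\kappa\to\kappa\times\kappa$. At stage $i$ we handle pairs $(\xi,i)$ with $\xi<\mu_i$. There are $\mu_i$ such $\xi$, which is too many to handle with one point each. Therefore we choose $X_i\subset B_i$ with $|X_i|=\mu_i^+<\mu$ avoiding nothing special. Each $Y_\xi$ with $|Y_\xi|\leq\mu_i$ misses a point of $X_i$. Put $X=\bigcup_i X_i$, which has size $\mu$. That is too big, so this variant fails.

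The fallback is monotonicity of the definitions. A family cofinal in $\sqb\mu^{<\mu}$ yields a family cofinal in $\sqb\mu^{\kappa}$ after intersecting with cofinal subfamilies of each $[Y]^{\kappa}$. This only gives the bound up to multiplication and is the main obstacle. I would instead expect the intended argument to be the direct one: a family of size $\mu$ of sets of size ${<}\mu$, diagonalised by choosing for each $i<\kappa$ a point $x_i$ not in $\bigcup_{\xi<\mu_i}(Y_\xi\cap B_i')$ for suitable large blocks, using that $|Y_\xi|<\mu$ forces $Y_\xi\cap[\mu_i,\mu)$ to miss points in some block of size larger than $|Y_\xi|$. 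I would refine the block choice until this works.

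\textbf{Item (2).} For $\geq$: $\mu^\kappa\geq 2^\kappa$ trivially. Also $\mu^\kappa\geq\cf(\sqb\mu^\kappa)$, since $\cf(\sqb\mu^\kappa)\leq\mu^\kappa$ by the upper bound in item (1).

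For $\leq$: every $X\in[\mu]^{\leq\kappa}$ lies in some $Y$ from a cofinal family $\cal F$ of size $\cf(\sqb\mu^\kappa)$. Each such $Y$ has only $|Y|^\kappa\leq\kappa^\kappa=2^\kappa$ subsets of size ${\leq}\kappa$. Finally, ${}^\kappa\mu$ injects into $[\mu\times\kappa]^{\kappa}$, giving $\mu^\kappa\leq 2^\kappa\cdot\cf(\sqb\mu^\kappa)$.

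\textbf{Item (3).} By regularity of $\kappa$, every $X\in[\kappa]^{<\kappa}$ is bounded in $\kappa$, so $X\subset\alpha$ for some $\alpha<\kappa$. Thus $\{\alpha\mid\alpha<\kappa\}$ is cofinal and has size $\kappa$, so $\cf(\sqb\kappa^{<\kappa})\leq\kappa$. For the reverse, fewer than $\kappa$ sets of size ${<}\kappa$ have union of size ${<}\kappa$ by regularity, so they cannot cover all singletons. Hence $\cf(\sqb\kappa^{<\kappa})=\kappa$.
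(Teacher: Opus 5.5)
\textbf{Where the proposal falls short.} Your proofs of item (3), item (2), and the first two lower bounds in item (1) are correct. The paper gives no proof of this Fact; it only cites Shelah's \emph{Cardinal Arithmetic}. The genuine gap is the third lower bound, $\mu^+\leq\cf(\sqb\mu^{<\mu})$, which you never actually prove:
\begin{itemize}
\item Your headline reduction $\cf(\sqb\mu^{<\mu})\geq\cf(\sqb\mu^\kappa)$ is left unproved. It is also not evident: knowing that some $X\in\sqb\mu^{\kappa}$ lies in a set $Y$ of size ${<}\mu$ gives you no cover of $X$ of size ${\leq}\kappa$.
\item The block construction produces a set of size $\mu$.
\item The closing ``refine the block choice until this works'' is a hope, not an argument.
\end{itemize}
You correctly diagnosed why the naive diagonalisation breaks: $\mu_i$-many sets of size ${<}\mu$ can have union $\mu$. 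You also made the right observation that each $Y_\xi$ is eventually small relative to $\mu_i$. However, you tried to exploit it inside blocks rather than in deciding which sets to diagonalise against.

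\textbf{The missing idea.} At stage $i$, diagonalise only against the sets that are small both in index and in size. Let $\abset{Y_\xi}{\xi\in\mu}$ be given in $\sqb\mu^{<\mu}$, and let $\mu_i$ ($i<\kappa$) be cardinals increasing and cofinal in $\mu$. Put
\[
Z_i=\Cup\set{Y_\xi}{\xi<\mu_i\text{ and }\card{Y_\xi}\leq\mu_i}.
\]
Then $\card{Z_i}\leq\mu_i\cdot\mu_i=\mu_i<\mu$, so you can choose $x_i\in\mu\setminus\big(Z_i\cup\set{x_j}{j<i}\big)$. The set $X=\set{x_i}{i<\kappa}$ has size $\kappa<\mu$, so $X\in\sqb\mu^{<\mu}$.

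Now fix any $\xi$. Both $\xi<\mu$ and $\card{Y_\xi}<\mu$ hold, so there is some $i$ with $\xi<\mu_i$ and $\card{Y_\xi}\leq\mu_i$. Then $Y_\xi\subset Z_i$, so $x_i\notin Y_\xi$, and hence $X\nsubset Y_\xi$. Therefore no family of size ${\leq}\mu$ is cofinal in $\sqb\mu^{<\mu}$.

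This is exactly your argument for $\sqb\mu^\kappa$ with one extra filter on $\card{Y_\xi}$. It settles the third bound with no comparison to $\cf(\sqb\mu^\kappa)$.
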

	
	For several results in \cref{sec:Covering,sec:Uniformity}, we assume the reader is familiar with the basics of forcing, e.g., such as treated in \cite{Jech03} or \cite{Kunen11}. We force downwards, thus $q\leq p$ implies that $q$ is a stronger condition than $p$.

	\section{Topological considerations}\label{sec:Topology}

	Before we can study the higher Baire and Cantor spaces for singular cardinals, we have to properly define what these spaces are. When we generalise the classical context of $\omega$ to the higher context of a singular cardinal $\mu$, we first have to consider what property of $\omega$ we are trying to generalise: should we replace $\omega$ by $\mu$ or by its cofinality $\kappa$? The choice depends on whether we want to interpret $\omega$ as a cardinality or as a cofinality. Exhausting all such choices in the generalisation of $\omom$ and $\omcs$, we end up with four sets of functions $\mumu$, $\muka$, $\mucs$ and $\kamu$ as potential underlying sets for the higher Baire and Cantor spaces. 
	
	The ubiquitous choice for a topology on $\omom$ (and $\omcs$) is the product topology. Remember that a subbase for the product topology is given by cylinder sets of the form $C_{n,m}=\set{f\in\omom}{f(n)=m}$. To obtain a base, we close this subbase under finite intersections. The topologies we will consider in the higher context will have bases given by intersections of  cylinder sets as well, but we have to choose how to generalise the notion of a \emph{finite} intersection. The concept of {``finiteness''}---the natural choice of smallness with respect to $\omega$---can be understood in three ways: in terms of cardinality (finite cardinalities are strictly smaller than $\omega$), cofinality (finite sets in $\omega$ are bounded),  or a combination of both (the cardinality of a finite set is too small to be cofinal). For a regular cardinal, these three interpretations coincide, but for a singular cardinal they do not, and hence we will consider three topologies.

	\subsection{Definition of the topological spaces}\label{sec:Topology definition}
	
	Let us, for the purpose of giving the definition of the topologies, fix our set of functions as the set $\derh$. That is, $\delta$ is the domain and $\rho$ is the range. Let $s\colon \delta\pto \rho$ be a partial functions from $\delta$ to $\rho$, then we define the \emph{cone} of $s$ (relative to $\derh$) as:
	\[
	\sqb s=\set{f\in\derh}{s\subset f}.
	\]
	Clearly, each cone $[s]$ is an intersection of cylinder sets in $\derh$; precisely, $[s]=\Cap_{\alpha\in\dom(s)}C_{\alpha,s(\alpha)}$. Each of the topologies we consider has a base consisting of cones of a specific family of partial functions.

	The \emph{${<}\mu$-box topology} on $\derh$ is generated by sets that are the intersection of ${<}\mu$-many cylinder sets. Equivalently, a base for the ${<}\mu$-box topology is given by $\set{\sqb s}{s\in\rm{Fn}_\mu(\delta,\rho)}$. In much the same way, we define the \emph{${<}\kappa$-box topology} as being generated by intersections of ${<}\kappa$-many cylinder sets; equivalently, the base is given by $\set{[s]}{s\in\rm{Fn}_\kappa(\delta,\rho)}$. Finally, the \emph{bounded topology} is generated by cones $[s]$ for $s\colon \delta\pto\rho$ partial functions with a domain bounded in $\delta$; equivalently, the family of basic open sets is given by $\set{\sqb s}{s\in\fderh}$.
	
	Assuming that $\cf(\delta)=\kappa$, we can easily observe that these three topologies are directly comparable to each other, and that the ${<}\kappa$-box topology is the coarsest of the three, whereas the ${<}\mu$-box topology is the finest of the three. Moreover, $\delta<\mu$ implies that the ${<}\mu$-box topology is discrete, and $\delta=\kappa$ (which is regular) implies that the ${<}\kappa$-box and bounded topologies coincide. We will write $(\derh,\kappa)$, $(\derh,\rm{bd})$ and $(\derh,\mu)$ for the space $\derh$ equipped with, respectively, the ${<}\kappa$-box, bounded, and ${<}\mu$-box topologies. Similarly, we will abbreviate these topologies by ``$\kappa$'', ``$\rm{bd}$'' and ``$\mu$'' in various places, e.g., in subscripts.
	
	To summarise, we have four sets of functions---$\mumu$, $\muka$, $\mucs$ and $\kamu$---and three topologies---bounded, ${<}\kappa$-box and ${<}\mu$-box---thus a total of twelve spaces to consider, each space $(X,\tau)$ consisting of one of the sets of functions $X$ and one of the topologies $\tau$. It is easy to see that each of these spaces is a zero-dimensional completely regular Hausdorff space. Moreover, apart from $(\kamu,\mu)$ (which is discrete), each of these spaces is strongly homogeneous (i.e., every nonempty clopen set is homeomorphic to the whole space).\footnote{This follows for instance from theorems by \citet[Theorem~2.4]{Terada93} and \citet[Corollary~14]{Medini11}.}

	Some of these twelve spaces happen to be homeomorphic to each other, whereas others are not homeomorphic to each other. In many cases, however, the existence of a homeomorphism is independent of $\mathsf{ZFC}$. The remainder of this section will be used to provide a complete description of the conditions under which the existence of a homeomorphism between any pair of our twelve spaces is provable. A schematic overview of these results is given in \cref{fig:homeomorphisms}. The reader who is purely interested in cardinal characteristics of the meagre ideal may safely skip forward to \cref{rmk:spaces}.

	\begin{figure}[t]
		
		\begin{tikzpicture}[xscale = 2.5, yscale=1.5]
			\node (kamubd) at (0,2) {$(\kamu,\rm{bd})$};
			\node (kamuka) at (0,1) {$(\kamu,\kappa)$};
			\node (kamumu) at (0,0) {$(\kamu,\mu)$};
			
			\node (mucsbd) at (1,2) {$(\mucs,\rm{bd})$};
			\node (mucska) at (1,1) {$(\mucs,\kappa)$};
			\node (mucsmu) at (1,0) {$(\mucs,\mu)$};
			
			\node (mukabd) at (2,2) {$(\muka,\rm{bd})$};
			\node (mukaka) at (2,1) {$(\muka,\kappa)$};
			\node (mukamu) at (2,0) {$(\muka,\mu)$};
			
			\node (mumubd) at (3,2) {$(\mumu,\rm{bd})$};
			\node (mumuka) at (3,1) {$(\mumu,\kappa)$};
			\node (mumumu) at (3,0) {$(\mumu,\mu)$};

			\draw[line width=1.5pt, shorten <=1mm, shorten >=1mm] (kamubd) -- (kamuka);
			
			\draw[line width=1.5pt, shorten <=1mm, shorten >=1mm] (mucsbd) -- (mukabd);
			
			\draw[line width=1.5pt, shorten <=1mm, shorten >=1mm] (mucsmu) -- (mukamu);
			
			\draw[] (kamubd) edge[line width=1.5pt, dash pattern=on 2.2mm off 1mm, edge label={{\footnotesize (A)}}, shorten <=1mm, shorten >=1mm] (mucsbd);
			\draw[] (mukabd) edge[line width=1.5pt, dash pattern=on 2.2mm off 1mm, edge label={\footnotesize (A)}, shorten <=1mm, shorten >=1mm] (mumubd);
			
			\draw[] (mukaka) edge[line width=1.5pt, dash pattern=on 2.2mm off 1mm, edge label={\footnotesize (C)}, shorten <=1mm, shorten >=1mm] (mumuka);
			
			\draw[] (mucska) edge[line width=1.5pt, dash pattern=on 2.2mm off 1mm, edge label={{\footnotesize (B)}}, shorten <=1mm, shorten >=1mm] (mukaka);
			
			\draw[] (mukamu) edge[line width=1.5pt, dash pattern=on 2.2mm off 1mm, edge label={\footnotesize (D)}, shorten <=1mm, shorten >=1mm] (mumumu);
			
			\draw (3.6,0) -- (3.6,2.3);
			
			\node [right] at (3.65,2.15) {\scshape Legend.};
			\node [right] at (3.65,1.55) {Homeomorphism exists iff ...};
			\node [right] at (3.65,1.2) {(A)\quad spaces have equal weight};
			\node [right] at (3.65,.85) {(B)\quad $\kappa$ is not weakly compact\footnotemark{}};
			\node [right] at (3.65,.5) {(C)\quad $\mu\leq 2^{<\kappa}$};
			\node [right] at (3.65,.15) {(D)\quad $\mu$ is not a strong limit};
		\end{tikzpicture}
		\caption{Overview of spaces}
		\label{fig:homeomorphisms}
	\end{figure}

	\subsection{Topological invariants}

	It is often possible to show that two spaces cannot be homeomorphic by showing that the spaces have distinct topological invariants. The topological invariants that serve most useful to us, are the weight, character and pseudocharacter of the spaces. We recall their definitions. Let $(X,\tau)$ be a topological space. 
	
	\begin{dfn}
		The \emph{weight} $w(X,\tau)$ is the least cardinality of a base for $\tau$. The \emph{character} $\chi(X,\tau)$ is the least cardinality $\nu$ such that each point $x\in X$ has a local base of cardinality $\nu$ around $x$. The \emph{pseudocharacter} $\psi(X,\tau)$ is the least cardinality $\nu$ such that each singleton $\ac x\subset X$ is the intersection of a family of open sets of size $\nu$. 
	\end{dfn}
	A summary of the values of the weight, character and pseudocharacter for each of the twelve spaces is given in \cref{fig:invariants}.
	
	\footnotetext{Under our definition, we regard $\omega$ as a weakly compact cardinal, cf.\ \cref{dfn:weakly compact}.}
	
	\begin{table}[t]
		
		\caption[Overview of topological invariants]{Overview of topological invariants\footnotemark{}}\label{fig:invariants}
		
		\begin{tabular}{rccccccccc}
			\toprule
			&
			\multicolumn{3}{c}{bounded} & 
			\multicolumn{3}{c}{${<}\kappa$-box} &
			\multicolumn{3}{c}{${<}\mu$-box} 
			\\\cmidrule(lr){2-4}\cmidrule(lr){5-7}\cmidrule(lr){8-10}
			& 
			$\kamu$ & $\mucs$ & $\mumu$ &
			$\mucs$ & $\muka$ & $\mumu$ &
			$\kamu$ & $\mucs$ & $\mumu$ 
			\\
			\midrule
			$w(X,\tau)=$ & 
			$\mu^{<\kappa}$ & $2^{<\mu}$ & $\mu^{<\mu}$ &
			$\mu^{<\kappa}$ 
			& $\mu^{<\kappa}$ 
			& $\mu^{<\kappa}$ &
			$\mu^\kappa$ & 
			$\mu^{<\mu}$  
			& $\mu^{<\mu}$
			\\
			$\chi(X,\tau)=$ & 
			$\kappa$ & $\kappa$ & $\kappa$ &
			$\cf(\sqb\mu^{<\kappa})$ & $\cf(\sqb\mu^{<\kappa})$ & $\cf(\sqb\mu^{<\kappa})$ & 
			$1$ & $\rm{cf}(\sqb\mu^{<\mu})$ & $\rm{cf}(\sqb\mu^{<\mu})$
			\\
			$\psi(X,\tau)=$ &
			$\kappa$ & $\kappa$ & $\kappa$ &
			$\mu$ & $\mu$ & $\mu$ &
			$1$ & $\kappa$ & $\kappa$ 
			\\
			\bottomrule
		\end{tabular}
		
	\end{table}
	\footnotetext{The character and pseudocharacter of each space is given in \cref{thm:top invariants}, and the weights for the bounded spaces are computed in \cref{lem:weight sized open partition}. The other spaces are left to the reader: for $(\mucs,\kappa)$ and $(\muka,\kappa)$, note that $\mu^{<\kappa}=2^{<\kappa}\cdot \cf([\mu]^{<\kappa})$, and for $(\mucs,\mu)$ similarly note that $\mu^{<\mu}=2^{<\mu}\cdot\cf(\sqb\mu^{<\mu})$}
	
	\begin{prp}\label{thm:top invariants}
		\leavevmode
		\begin{enumerate}[label=(\arabic*)]
			\item \label{thm:top invariants:1} If $X\in\ac{\kamu,\mucs,\muka,\mumu}$, then $\chi(X,\rm{bd})=\psi(X,\rm{bd})=\kappa$. Also, $\chi(\kamu,\kappa)=\psi(\kamu,\kappa)=\kappa$.
			\item \label{thm:top invariants:2}
			If $X\in\ac{\mucs,\muka,\mumu}$, then $\chi(X,\kappa)=\cf(\sqb\mu^{<\kappa})$ and $\psi(X,\kappa)=\mu$.
			\item \label{thm:top invariants:3} If $X\in\ac{\mucs,\muka,\mumu}$, then $\chi(X,\mu)=\cf(\sqb\mu^{<\mu})$ and $\psi(X,\mu)=\kappa$.
			\item \label{thm:top invariants:4} $(\kamu,\mu)$ is discrete, thus $\chi(\kamu,\mu)=\psi(\kamu,\mu)=1$.
		\end{enumerate}
	\end{prp}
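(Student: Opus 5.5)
For each space we bound $\psi\le\chi$ from above by exhibiting explicit local bases, and bound $\psi$ from below by a counting or diagonal argument; where needed, we bound $\chi$ from below separately. Homogeneity lets us work at one point $f$, since the spaces are homogeneous via coordinatewise permutations of the range.

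\emph{Item (1).} For the bounded topology, fix a cofinal increasing sequence $\abset{\delta_\xi}{\xi<\kappa}$ in the domain; it is $\mu$ or $\kappa$, both of cofinality $\kappa$. The cones $[f\restr\delta_\xi]$ form a local base at $f$: any basic open $[s]\ni f$ has bounded $\dom(s)\subset\delta_\xi$ for some $\xi$, so $[f\restr\delta_\xi]\subset[s]$. Hence $\chi\le\kappa$. For $\psi\ge\kappa$, suppose $\ac f=\Cap_{i<\lambda}U_i$ with $\lambda<\kappa$. Choose basic $[f\restr\delta_{\xi_i}]\subset U_i$. By regularity of $\kappa$ the supremum $\xi$ of the $\xi_i$ is ${<}\kappa$, so $[f\restr\delta_\xi]\subset\Cap U_i$. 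This cone contains points other than $f$, since the range has at least $2$ elements; contradiction. For $(\kamu,\kappa)$ this is the same topology, since $\delta=\kappa$.

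\emph{Item (2).} Take a $\subset$-cofinal family $\cal F\subset[\mu]^{<\kappa}$ of size $\cf([\mu]^{<\kappa})$. Then $\set{[f\restr A]}{A\in\cal F}$ is a local base at $f$, giving $\chi\le\cf([\mu]^{<\kappa})$. Also $\ac f=\Cap_{\alpha<\mu}[f\restr\ac\alpha]$, giving $\psi\le\mu$.

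For $\psi\ge\mu$, if $\ac f$ is the intersection of $\lambda<\mu$ open sets, pick basic cones $[f\restr A_i]\subset U_i$ with $\card{A_i}<\kappa$. Then $\Cup A_i$ has size ${<}\mu$ (as $\lambda\cdot\kappa<\mu$), so some coordinate outside it can be changed, a contradiction.

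For $\chi\ge\cf([\mu]^{<\kappa})$, a local base $\cal B$ may be assumed to consist of basic cones $[f\restr A]$, after shrinking each member to a basic cone inside it, which does not increase size. Given any $X\in[\mu]^{<\kappa}$, the set $[f\restr X]$ is open and contains $f$, so some $[f\restr A]\subset[f\restr X]$. This forces $X\subset A$: otherwise change $f$ at a point of $X\setminus A$, using that the range has size $\ge2$. So the sets $A$ form a cofinal family. This last step is the main point of the whole proposition, though it is routine.

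\emph{Item (3)} is identical with $[\mu]^{<\mu}$ in place of $[\mu]^{<\kappa}$. The only difference is $\psi$. We have $\psi\le\kappa$ because $\ac f=\Cap_{\xi<\kappa}[f\restr\mu_\xi]$ for $\mu_\xi$ cofinal in $\mu$, each $\mu_\xi<\mu$. We have $\psi\ge\kappa$ because fewer than $\kappa$ cones with domains of size ${<}\mu$ have union of size ${<}\mu$, by regularity of $\kappa=\cf(\mu)$, so again some coordinate is free.

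\emph{Item (4)} is immediate: $\card\kappa<\mu$, so $[f]$ itself is a basic open set of the ${<}\mu$-box topology, the space is discrete, and both invariants are $1$.
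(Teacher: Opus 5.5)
Your proposal is correct and takes the same approach as the paper. The paper's brief proof also restricts local bases to cones $[x\restr A]$ and notes that a family $F$ is cofinal in $[\mu]^{<\kappa}$ (resp.\ $[\mu]^{<\mu}$) exactly when $\{[x\restr A] : A\in F\}$ is a local base; your counting arguments for $\psi$ fill in the details it leaves to the reader. (A cosmetic point: in item (3), the bound $|\bigcup_{i<\lambda}A_i|<\mu$ needs only $\lambda<\cf(\mu)$, not the regularity of $\kappa$.)
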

	\begin{proof}
		The calculations are easy. For example, to compute the character, note that, without loss of generality, any local base around a point $x\in\derh$ consists of cones of the form $\sqb{x\restr A}$, where $A\subset \delta$. If we take $(\mucs,\kappa)$ as our space, then $\chi(\mucs,\kappa)=\cf(\sqb\mu^{<\kappa})$ holds because $F\subset\sqb\mu^{<\kappa}$ is cofinal if and only if $\set{\sqb{x\restr A}}{A\in F}$ is a local base.
	\end{proof}
	The points \ref{thm:top invariants:1}--\ref{thm:top invariants:4} categorise our twelve spaces into four kinds, and any two spaces of different kind cannot be homeomorphic, as they have incompatible topological invariants.
	
	\subsection{The bounded topology spaces}
	For the spaces with the bounded topology, we will show that any of them is consistently homeomorphic to any other, and this consistency depends on whether the spaces have equal weight. We also repeat that the ${<}\kappa$-box and bounded topologies coincide for the set of functions $\kamu$, and thus $(\kamu,\kappa)=(\kamu,\rm{bd})$. 
	
	The existence of homeomorphisms between the spaces with bounded topologies can be shown by constructing a base $\cal B$ in each space such that the partial orders $(\cal B,\subset)$ of the two bases are isomorphic. We will use the following notion: 
	
	\begin{dfn}
		let $(X,\tau)$ be a space, then an \emph{open partition} of $X$ is a family $P$ of mutually disjoint nonempty open sets such that $\Cup P=X$.
	\end{dfn}
	
	\begin{lmm}\label{lem:weight sized open partition}
		We have the following weights: 
		\[
		w(\kamu,\rm{bd})=\mu^{<\kappa},\quad 
		w(\mucs,\rm{bd})=w(\muka,\rm{bd})=2^{<\mu},\quad
		w(\mumu,\rm{bd})=\mu^{<\mu}.
		\]
		Furthermore, $(X,\rm{bd})$ has an open partition of cardinality $w(X,\rm{bd})$,  for each $X\in\ac{\kamu,\mucs,\muka,\mumu}$.
	\end{lmm}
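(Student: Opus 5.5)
The plan is to prove, for each $X\in\ac{\kamu,\mucs,\muka,\mumu}$, an upper bound on the weight via the canonical base, a lower bound via an explicit open partition, and then to use the trivial inequality $\card P\le w(X,\rm{bd})$ for any open partition $P$. This inequality holds because every member of $P$ contains a basic open set from a minimal base, and distinct members of $P$ are disjoint, so they receive distinct basic sets. Both the weight computation and the existence of a weight-sized open partition therefore follow once we exhibit a base and an open partition of the same cardinality.

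For the upper bounds we write $X=\derh$ with $\delta\in\ac{\kappa,\mu}$ and use the base $\set{[s]}{s\in\fderh}$. For $\kamu$ this base has size $\card{\fkamu}=\mu^{<\kappa}$. For $\mumu$ it has size $\mu^{<\mu}$. For $\mucs$ it has size $\sum_{\alpha<\mu}2^{\card\alpha}=2^{<\mu}$. For $\muka$ it has size $\sum_{\alpha<\mu}\kappa^{\card\alpha}$; since $\kappa<\mu$, we have $\kappa^{\card\alpha}\le 2^{\card\alpha\cdot\kappa}\le 2^{<\mu}$, so the bound is again $2^{<\mu}$.

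For the lower bounds we fix a cofinal increasing sequence $\abset{\mu_i}{i<\kappa}$ in $\mu$, which can be taken as ordinals (and as cardinals where convenient). In each space and for each $i<\kappa$, the family $P_i=\set{[s]}{s\in{}^{\mu_i}\rho}$ is an open partition (resp.\ ${}^{i}\mu$ for $\kamu$), but a single level may be too small. We therefore glue levels together with a tree argument, building a partition into cones of \emph{different} lengths. Concretely, for $\mucs$ we partition according to the first coordinate block: we use the sets $[t]$ where $t\in{}^{\mu_{i}}2$ satisfies a suitable marker, for example $t(\mu_i)=1$ and $t\restr[\mu_j,\mu_{j+1})$ constant-zero for all earlier blocks, coding the index $i$. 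The leftover set (all markers zero) is then refined further.

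This is the step I expect to be the main obstacle: making the leftover part open, since in the bounded topology an unbounded intersection of cones is not open. The alternative I would try first avoids markers entirely. The space $(\derh,\rm{bd})$ is a $\kappa$-additive space in which every point has a decreasing local base of cones $[x\restr\mu_i]$ of length $\kappa$. Given a partition $Q_i$ refining level $i$, choose pairwise disjoint cones so that for each $i$ some $\card{{}^{\mu_i}\rho}$-many cones of length about $\mu_i$ appear. Split $\rho^{\mu_0}$-many first-level cones into $\kappa$ groups $G_i$ (possible since the number of cones is at least $\kappa$), and refine every cone in $G_i$ to level $\mu_i$. The result is an open partition of size $\sum_i\card{{}^{\mu_i}\rho}=\rho^{<\mu}$, because $\mu_i\to\mu$. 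The only care needed is for $\kamu$, where the levels are ${}^{i}\mu$ and $\mu^{<\kappa}=\sup_i\mu^{\card i}$ if $\mu^{<\kappa}$ is attained as a supremum. When $\mu^{<\kappa}=\mu^{\lambda}$ for some fixed $\lambda<\kappa$, the single level $P_\lambda$ already suffices. The same single-level observation handles the cases where $\rho^{<\mu}$ is attained at some $\mu_i$.
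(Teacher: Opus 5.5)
Your argument is correct and, once the marker detour is discarded, it is essentially the paper's argument. The upper bounds come from the canonical base of bounded cones, and the weight-sized open partition is obtained by refining the cones of one fixed level to lengths cofinal in the domain; your groups $G_i$ play the role of the paper's map $f\colon{}^\kappa2\to\mu$ with cofinal range (and of the coordinate $s(0)$ for $\kamu$). The one detail to fix is to take $\mu_0\geq\kappa$, so that for $\mucs$ the first level has at least $\kappa$ cones to distribute among the $\kappa$ groups.
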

	\begin{proof}
		Note that the size of an open partition is a lower bound to the weight. For $(\kamu,\rm{bd})$, an open partition is given by $\set{\sqb s}{s\in\fkamu\text{ and }\dom(s)\neq\emp\text{ and } s(0)=\dom(s)}$. For $(\mucs,\rm{bd})$, fix some $f\colon \kacs\to\mu$ such that $\ran(f)$ is cofinal in $\mu$, then $\set{[s\append t]}{s\in\kacs\text{ and } t\in {}^{f(s)}2}$ is an open partition. Open partitions for $(\muka,\rm{bd})$ and $(\mumu,\rm{bd})$ are found in the same way.
	\end{proof}

	We remark that for the spaces $(\mucs,\rm{bd})$ and $(\mumu,\rm{bd})$, the specific conditions under which they are homeomorphic have already been described in \cite[Proposition~6.6]{AndrettaMottoRos22}.\footnote{The characterisation given in \cite{AndrettaMottoRos22} is that $(\mucs,\rm{bd})$ and $(\mumu,\rm{bd})$ are homeomorphic if and only if $\mu$ is not a singular strong limit. This is the same condition that follows from \cref{thm:homeo for bounded}: $2^{<\mu}=\mu^{<\mu}$ implies $\mu$ is not a strong limit, since $\mu<\mu^{<\mu}$; reversely, if $\mu$ is not a strong limit and $\lambda<\mu$ is such that $2^\lambda\geq\mu$, then $2^\lambda=\mu^\lambda$.  }
	
	\begin{thm}\label{thm:homeo for bounded}
		If $X,X'\in\ac{\kamu,\mucs,\muka,\mumu}$, then $(X,\rm{bd})$ is homeomorphic to $(X',\rm{bd})$ if and only if $w(X,\rm{bd})=w(X',\rm{bd})$.
	\end{thm}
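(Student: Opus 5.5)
Weight is a topological invariant, so the forward direction is immediate. For the converse, we construct in each space $(X,\rm{bd})$ a base $\cal B_X$ such that $(\cal B_X,\subset)$ is a tree of height $\kappa$ with a specific uniform shape. Then we show that two spaces whose bases have isomorphic shapes are homeomorphic. Concretely, fix $\lambda=w(X,\rm{bd})$. We aim for $\cal B_X=\Cup_{\xi<\kappa}P_\xi$ with the following properties:
\begin{enumerate}[label=(\roman*)]
\item each $P_\xi$ is an open partition of $X$ into clopen cones;
\item $P_\eta$ refines $P_\xi$ for $\xi<\eta$, and every member of $P_\xi$ splits into exactly $\lambda$ members of $P_{\xi+1}$;
\item at limits $\eta$, the members of $P_\eta$ are exactly the nonempty intersections $\Cap_{\xi<\eta}U_\xi$ of decreasing chains $U_\xi\in P_\xi$;
\item every decreasing chain of length $\kappa$ has intersection a singleton, and every point arises this way;
\item $\cal B_X$ is a base.
\end{enumerate}

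Given two such systems with the same $\lambda$, we build an isomorphism of trees level by level, using bijections between sets of successors of size $\lambda$ and taking the induced map at limits. This yields a bijection between the branches, i.e.\ between the points. The bijection is a homeomorphism because it maps the base $\cal B_X$ onto $\cal B_{X'}$.

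To build $P_\xi$, fix an increasing cofinal sequence $\abset{\mu_\xi}{\xi<\kappa}$ in $\mu$ (or use $\mu_\xi=\xi+1$ for $\kamu$). Let $P_\xi$ consist of the cones $[x\restr\gamma_\xi(x)]$, where the height $\gamma_\xi(x)<\delta$ is chosen in a coherent, increasing and cofinal way. We then perform the splitting with the open-partition trick from \cref{lem:weight sized open partition}. At stage $\xi+1$, a cone $[s]$ is partitioned by first reading a block of $\kappa$ (or fewer) coordinates that encodes a target height $\beta$ with $\beta$ cofinal in $\mu$, and then reading $x$ up to $\beta$. For instance, for $\mucs$ we read $u\in\kacs$ after $s$ and then $t\in{}^{f(u)}2$. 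The number of pieces is then $\sum_{\beta<\mu}|{}^{\beta}\rho|$, which equals $2^{<\mu}$ or $\mu^{<\mu}$ respectively. For $\kamu$ we instead read a single coordinate, giving $\mu$ pieces at each successor step. In that case one must check that $\kappa$ levels of $\mu$-splitting give weight $\mu^{<\kappa}$, which is what branches of length ${<}\kappa$ count. The limit levels are automatically cones, since the heights stay bounded below $\delta$ when $\eta<\kappa$ (for $\mu$: $\cf\mu=\kappa>\cf\eta$ gives boundedness). At level $\kappa$ the heights are cofinal in $\delta$, so branches determine points. The base property (v) holds because every bounded cone $[x\restr\alpha]$ contains $[x\restr\gamma_\xi(x)]$ once $\gamma_\xi(x)\geq\alpha$.

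The main obstacle is that the number of pieces per successor step must be exactly $\lambda$ uniformly. This requires that every cone of the bounded topology has exactly $\lambda$ successors. For $\kamu$ it also requires that the limit levels have the right size. The real point is that both trees have every node with $\lambda$ immediate successors and full limit levels, and that such a tree with $\kappa$ levels is unique up to isomorphism. If exact uniformity fails, for example because the first splitting from the root is smaller, I would replace $\lambda$-splitting with a weaker back-and-forth condition: each node's successor set has size $\lambda$ after grouping. Alternatively, I would invoke strong homogeneity, noting that every nonempty clopen set is homeomorphic to the whole space, to reduce to comparing a single partition of size $\lambda$ into copies of the space.
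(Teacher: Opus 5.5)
Your overall strategy is the paper's: $\kappa$ levels of open partitions by cones, every piece splitting into $w(X,\mathrm{bd})$ pieces, branches of length $\kappa$ determining points, and an isomorphism of the two bases. Your condition (iii) on limit levels is a welcome precision that the paper leaves implicit.

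The gap is in your construction for $\kamu$. Reading a single coordinate gives every node exactly $\mu$ immediate successors. Your isomorphism argument, however, needs exactly $\lambda=\mu^{<\kappa}$ immediate successors at every node. It does not help that the total number of branches of length ${<}\kappa$ is $\mu^{<\kappa}$: a tree isomorphism must preserve the number of immediate successors of the root. So the problem is not the size of the limit levels, which is determined once limits are full, but the successor splitting itself.

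This is not a marginal case. If $w(\kamu,\mathrm{bd})=w(\mumu,\mathrm{bd})$, then $\mu^{<\kappa}=\mu^{<\mu}\geq\mu^\kappa>\mu$. So your argument fails for this pair in every instance where the theorem has content. It fails likewise for $\kamu$ against $\mucs$ or $\muka$ whenever $\mu$ is not a strong limit, since then $2^{<\mu}>\mu$.

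Your fallbacks do not repair this:
\begin{itemize}
\item Regrouping levels of ${}^{<\kappa}\mu$ only produces splitting numbers $\mu^\theta$ with $\theta<\kappa$. When $\kappa$ is a limit cardinal, these can all be strictly below $\mu^{<\kappa}$.
\item Knowing that each space is a disjoint sum of $\lambda$ copies of itself does not on its own give a homeomorphism between the two spaces.
\end{itemize}

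The repair is to use the open-partition trick for $\kamu$ as well, exactly as you do for the other spaces. In the paper, strong homogeneity is invoked precisely to get a $w(X,\mathrm{bd})$-sized open partition of every cone, not just of $X$. Concretely:
\begin{itemize}
\item Fix $\ell\colon\mu\to\kappa$ with range unbounded in $\kappa$.
\item Split a cone $[s]$, $s\in\fkamu$, into the cones $[s\append\ab{v}\append r]$ for $v<\mu$ and $r\in{}^{\ell(v)}\mu$.
\item This yields $\sum_{v<\mu}\mu^{|\ell(v)|}=\mu^{<\kappa}$ pieces.
\item By regularity of $\kappa$, heights stay below $\kappa$ at limit levels ${<}\kappa$, and they are cofinal at level $\kappa$.
\end{itemize}
With this change your proof goes through and is essentially the paper's.
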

	\begin{proof}
		Let $X,X'\in\ac{\kamu,\mucs,\muka,\mumu}$ and assume $w(X,\rm{bd})=w(X',\rm{bd})$. We build a sequence of open partitions $\abset{B_\alpha}{\alpha\in\kappa}$ of $(X,\rm{bd})$ with the following two properties:
		\begin{enumerate}
			\item For $\alpha<\beta$ and $[s]\in B_\alpha$, the set $\set{[t]\in B_\beta}{[t]\subset[s]}$ has size $w(X,\rm{bd})$. Note that this set is an open partition of $\sqb s$, it exists by \cref{lem:weight sized open partition} and because $[s]$ and $X$ are homeomorphic by strong homogeneity of $(X,\rm{bd})$.
			\item For $f\in X$ and $\abset{s_\alpha}{\alpha\in\kappa}$ such that $[s_\alpha]\in B_\alpha$ and $f\in [s_\alpha]$ for all $\alpha$, we have $f=\Cup_{\alpha\in\kappa} s_\alpha$.
		\end{enumerate}
		Observe that $\cal B:=\Cup_{\alpha\in\kappa} B_\alpha$ forms a base for $(X,\rm{bd})$. We also build a base $\cal B'$ for $(X',\rm{bd})$ following the same method. Since $w(X,\rm{bd})=w(X',\rm{bd})$, it is easy to see that the structures $(\cal B,\subset)$ and $(\cal B',\subset)$ are isomorphic. Thus $(X,\rm{bd})$ and $(X',\rm{bd})$ are homeomorphic.
	\end{proof}
	It follows that $(\mucs,\rm{bd})$ and $(\muka,\rm{bd})$ are homeomorphic under $\sf{ZFC}$, and that $\mu^{<\kappa}=2^{<\mu}$ and $2^{<\mu}=\mu^{<\mu}$, respectively, are equivalent to the existence of homeomorphisms between $(\kamu,\rm{bd})$ and $(\mucs,\rm{bd})$, and between $(\mucs,\rm{bd})$ and $(\mumu,\rm{bd})$. 
	
	Let us discuss the consistency of these arithmetical assertions. Both are independent of $\sf{ZFC}$: 
	\begin{enumerate}
		\item Under the assumption of $\sf{GCH}$ we have $\mu^{<\kappa}=2^{<\mu}=\mu<\mu^+=\mu^{<\mu}$. 
		\item Adding $\mu^{+}$-many Cohen generic subsets of $\kappa$ over a model of $\sf{GCH}$, we obtain a model in which $\mu^{<\kappa}=\mu<2^\kappa=\mu^+=2^{<\mu}=\mu^{<\mu}$. 
		\item If $\kappa$ is uncountable, then both equalities may hold in the same model, 
		by adding $\mu^+$-many Cohen generic subsets of $\omega$ to a model of $\sf{GCH}$, which forces $2^{\aleph_0}=\mu^+=\mu^{<\kappa}=2^{<\mu}=\mu^{<\mu}$. 
	\end{enumerate}
	Cardinal arithmetic allows us to conclude one additional result under $\sf{ZFC}$ without additional assumptions:
	
	\begin{crl}
		The space $(\mucs,\rm{bd})$ is homeomorphic to $(\kamu,\rm{bd})$ or to $(\mumu,\rm{bd})$ or to both.
	\end{crl}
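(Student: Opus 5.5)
By \cref{thm:homeo for bounded} and \cref{lem:weight sized open partition}, it suffices to show that $2^{<\mu}=\mu^{<\kappa}$ or $2^{<\mu}=\mu^{<\mu}$. I will split into cases according to whether $\mu$ is a strong limit.

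\emph{Case 1: $\mu$ is not a strong limit.} Fix $\lambda<\mu$ with $2^\lambda\geq\mu$. For every $\nu$ with $\lambda\leq\nu<\mu$ we have $\mu^\nu\leq(2^\lambda)^\nu=2^\nu$. For $\nu<\lambda$ we have $\mu^\nu\leq\mu^\lambda$. Hence
\[
\mu^{<\mu}=\sup_{\lambda\le\nu<\mu}\mu^\nu\le\sup_{\nu<\mu}2^\nu=2^{<\mu}\le\mu^{<\mu}.
\]
So $2^{<\mu}=\mu^{<\mu}$, and $(\mucs,\rm{bd})$ is homeomorphic to $(\mumu,\rm{bd})$. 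This is also the observation in the footnote concerning \cite{AndrettaMottoRos22}.

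\emph{Case 2: $\mu$ is a strong limit.} Then $2^{<\mu}=\mu$. I claim $\mu^{<\kappa}=\mu$ as well. Write $\mu=\sup_{\alpha<\kappa}\mu_\alpha$ with each $\mu_\alpha<\mu$. Let $\nu<\kappa$ and $f\colon\nu\to\mu$. Since $\kappa=\cf(\mu)$ is regular and $\nu<\kappa$, the range of $f$ is bounded in $\mu$, so $f\in{}^\nu\mu_\alpha$ for some $\alpha<\kappa$. Therefore
\[
\mu^\nu\le\kappa\cdot\sup_{\alpha<\kappa}\mu_\alpha^{\nu}\le\kappa\cdot\sup_{\alpha<\kappa}2^{\mu_\alpha\cdot\nu}\le\mu,
\]
where the last step uses that $\mu$ is a strong limit. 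Taking the supremum over $\nu<\kappa$ gives $\mu^{<\kappa}\le\mu$. Trivially $\mu\le\mu^{<\kappa}$, so $\mu^{<\kappa}=\mu=2^{<\mu}$, and $(\mucs,\rm{bd})$ is homeomorphic to $(\kamu,\rm{bd})$.

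The only step needing care is the bound $\mu^{<\kappa}\le\mu$ in Case 2. It relies on functions with domain of size ${<}\cf(\mu)$ having bounded range, which is why the exponent must stay below $\kappa$ rather than below $\mu$. The remainder is bookkeeping with the weights from \cref{lem:weight sized open partition}.
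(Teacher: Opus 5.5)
Your proof is correct. It reduces to the same weight comparison as the paper, but it organises the cases differently.

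The paper splits on whether $\mu^{<\kappa}=2^{<\mu}$, writing $2^{<\mu}=\kappa^{<\mu}$. If they differ, there is $\lambda<\mu$ with $\kappa^\lambda>\mu^{<\kappa}\geq\mu$. So the inequality $\mu\le\kappa^\lambda$ comes for free, and the computation in your Case~1 then gives $\kappa^{<\mu}=\mu^{<\mu}$. Arranged this way, one never has to evaluate $\mu^{<\kappa}$ under any hypothesis. In particular, your Case~2 (the standard fact that $\mu^{<\kappa}=\mu$ for a strong limit $\mu$ of cofinality $\kappa$) is not needed.

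What your strong-limit split buys is sharper information. A strong limit $\mu$ forces $(\mucs,\rm{bd})\cong(\kamu,\rm{bd})$, and otherwise $(\mucs,\rm{bd})\cong(\mumu,\rm{bd})$. Combine this with the fact that $\mu<\mu^\kappa\le\mu^{<\mu}$ rules out the second homeomorphism when $\mu$ is a strong limit, which is the content of the paper's footnote. You then know that the first alternative holds without the second exactly when $\mu$ is a strong limit. The paper's argument yields only the bare disjunction.
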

	\begin{proof}
		First note that the weights of the four bounded spaces are linearly ordered: 
		\[
		\mu^{<\kappa}\leq 2^{<\mu}=\kappa^{<\mu}\leq \mu^{<\mu}.
		\]
		If we assume $\mu^{<\kappa}<\kappa^{<\mu}$, then there is $\lambda<\mu$ such that $\mu^{<\kappa}<\kappa^\lambda$. But, $\kappa<\mu\leq\kappa^\lambda$ implies $\mu^\lambda=\kappa^\lambda$ (e.g., by \cite[Theorem~5.20]{Jech03}), and thus it follows that $\kappa^{<\mu}=\mu^{<\mu}$.
	\end{proof}
	
	\subsection{\texorpdfstring{The ${<}\kappa$-box topology spaces}{The <κ-box topology spaces}}
	Let us now consider the spaces with the ${<}\kappa$-box topology. As far as the regular higher Cantor and Baire spaces are concerned, it is well-known that $(\kaka,\kappa)$ and $(\kacs,\kappa)$ are homeomorphic if and only if  $\kappa$ is not a weakly compact cardinal (cf. \cite[Theorem~1]{HungNegrepontis73}). There are many equivalent ways to define weakly compact cardinals (see \cite[Theorem~8.23]{ComfortNegrepontis74}), but the one we will use is topological in nature.
	\begin{dfn}\label{dfn:weakly compact}
		Let a topological space $X$ be called \emph{${<}\kappa$-compact} if every open cover of $X$ has a subcover of cardinality ${<}\kappa$. We define an infinite cardinal number $\kappa$ to be \emph{weakly compact} if $(\kacs,\kappa)$ is a ${<}\kappa$-compact space. 
	\end{dfn}
	Although unconventional, for the sake of brevity we allow weakly compact cardinals to be countable, and indeed $\aleph_0$ satisfies the above conditions, since $(\omcs,\omega)$ is a compact space. In our context of spaces of functions with domain $\mu$, we will show that $(\mucs,\kappa)$ and $(\muka,\kappa)$ are homeomorphic under the same assumption that the cofinality $\kappa$ of $\mu$ is weakly compact. In the proof of \cref{thm:weakly compact homeomorphism} below, the direction $(\pmi)$ was told to us by Hiroshi Sakai, who granted permission to include it in this article.
	
	\begin{thm}\label{thm:weakly compact homeomorphism}
		The spaces $(\mucs,\kappa)$ and $(\muka,\kappa)$ are homeomorphic if and only if $\kappa$ is not a weakly compact cardinal.
	\end{thm}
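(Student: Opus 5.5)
The direction $(\pmi)$ will reduce to the regular case, that is, to the theorem of \citet{HungNegrepontis73} recalled above. The direction $(\imp)$ will rest on a compactness-type invariant that $(\muka,\kappa)$ visibly fails; the main obstacle is showing that $(\mucs,\kappa)$ has it when $\kappa$ is weakly compact.

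\emph{Direction $(\pmi)$.} Assume $\kappa$ is not weakly compact. Then $\kappa$ is uncountable, and $(\kacs,\kappa)$ and $(\kaka,\kappa)$ are homeomorphic by \cite[Theorem~1]{HungNegrepontis73}; fix such a homeomorphism $h$. Since $\card{\mu\times\kappa}=\mu$, we may identify $\mucs$ with ${}^{\mu\times\kappa}2\cong\prod_{\xi<\mu}\kacs$, and likewise $\muka$ with $\prod_{\xi<\mu}\kaka$. We then check that, under these identifications, the ${<}\kappa$-box topology on $\mucs$ coincides with the ${<}\kappa$-box product of $\mu$ copies of $(\kacs,\kappa)$, and similarly for $\muka$.
\begin{enumerate}
\item A basic open set of the product topology restricts ${<}\kappa$ factors, each to a cone with support of size ${<}\kappa$.
\item By regularity of $\kappa$, the union of fewer than $\kappa$ such supports has size ${<}\kappa$, so such a set is a cone $[s]$ with $s\in\rm{Fn}_\kappa(\mu\times\kappa,2)$.
\item Conversely, every such cone touches only ${<}\kappa$ blocks $\ac{\xi}\times\kappa$, so it is basic open in the product.
\end{enumerate}
The coordinatewise map $\abset{x_\xi}{\xi<\mu}\mapsto\abset{h(x_\xi)}{\xi<\mu}$ is then a homeomorphism. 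Indeed, the preimage of a basic open product of ${<}\kappa$ open factors is again a product of ${<}\kappa$ open factors, and the same holds for the inverse map.

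\emph{Direction $(\imp)$.} Assume $\kappa$ is weakly compact, so $\kappa^{<\kappa}=\kappa$ (or $\kappa=\omega$). The sets $C_{0,\alpha}$ for $\alpha<\kappa$ form a partition of $\muka$ into $\kappa$-many nonempty clopen sets. So it suffices to show that $(\mucs,\kappa)$ has no partition into exactly $\kappa$-many nonempty open sets, and this is the step I expect to be the main obstacle. For $\kappa=\omega$, this holds because $\mucs$ with the product topology is compact by Tychonoff's theorem. For uncountable $\kappa$ we cannot appeal to ${<}\kappa$-compactness of $(\mucs,\kappa)$ itself, since that would require strong compactness. Instead the plan is to localise to $\kappa$ coordinates.

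Let $\ac{U_i}{i<\kappa}$ be such a partition.
\begin{enumerate}
\item Take an elementary submodel $M\prec H(\theta)$ of size $\kappa$ with ${}^{<\kappa}M\subset M$ and $\abset{U_i}{i<\kappa}\in M$, and set $D=M\cap\mu$.
\item Let $\cal C$ be the family of cones $[t]$ with $t\in M$ (so $\dom t\subset D$) and $[t]\subset U_i$ for some $i$.
\item Show that the projections of the cones in $\cal C$ cover ${}^D2\cong\kacs$.
\item By weak compactness, ${<}\kappa$-many of these cones cover ${}^D2$.
\item Because each $[t]\in\cal C$ has $\dom t\subset D$, their preimages in $\mucs$ are the cones $[t]$ themselves, so these ${<}\kappa$-many cones cover $\mucs$. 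Each lies inside a single $U_i$, so fewer than $\kappa$ of the $U_i$ are nonempty, a contradiction.
\end{enumerate}

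The delicate point is step~3. Given $g\in{}^D2$, extend it to some $f\in\mucs$ with $f\in U_i$ and $[f\restr A]\subset U_i$ for some $A$ of size ${<}\kappa$. By closure of $M$ under ${<}\kappa$-sequences, $s=f\restr(A\cap D)\in M$. Elementarity then yields $t\in M$ extending $s$ with $[t]\subset U_j$ for some $j$, but we need this $t$ to be compatible with $g$. My first attempt is to choose the extension $f$ cleverly, for instance constantly $0$ outside $D$, and to use elementarity together with the partition property, in the form ``$[s]\cap U_i\neq\emp$ implies some $[t]\supset\dots$''. Concretely, I would define a ${<}\kappa$-branching tree of partial attempts inside $M$ and use the tree property of $\kappa$, rather than bare compactness, to find the compatible $t$. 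If this fails, the fallback is to replace the partition invariant by a Ramsey- or indescribability-style invariant, using $\Pi^1_1$-indescribability of $\kappa$ to reflect the partition to a $\kappa$-sized set of coordinates.
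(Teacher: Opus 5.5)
Your direction ``$\kappa$ not weakly compact implies homeomorphic'' is the paper's argument: apply the Hung--Negrepontis homeomorphism coordinatewise to the $\mu$-fold ${<}\kappa$-box product. Your reduction of the other direction to showing that $(\mucs,\kappa)$ has no partition into $\kappa$ nonempty open sets also matches the paper. The gap is step~3. It carries the entire content of that direction, and you leave it open: you only describe attempts. The tools you reach for (the tree property, $\Pi^1_1$-indescribability) point the wrong way, because step~3 needs no compactness at all. Step~3 implies that every piece $U_i$ is the union of the at most $\kappa$ cones of $\mathcal{C}$ lying inside it. That is exactly what the paper has to establish separately in its Case~2, by a $\Delta$-system argument that uses only $\kappa^{<\kappa}=\kappa$ and the disjointness of the pieces (so each piece is also closed). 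Extending $s$ by elementarity to a cone inside some $U_j$, as you try to do, does not by itself help: $g\notin M$, so nothing ties the witness to $g$.

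The missing idea is that no extension is needed. In your notation, with $[f\restr A]\subset U_i$ and $s=f\restr(A\cap D)\in M$, you already have $[s]\subset U_i$.
\begin{itemize}
\item Suppose not. Then ``there are $x\in[s]\setminus U_i$, an index $j$ and a cone $[t]$ with $x\in[t]\subset U_j$'' is true with parameters in $M$, so it has witnesses $j,t\in M$.
\item Then $j\neq i$, and $\dom(t)\subset D$ since $t\in M$ has size ${<}\kappa$.
\item So $t$ meets $\dom(f\restr A)$ only inside $A\cap D=\dom(s)$, where both agree with $x$. Hence $[t]\cap[f\restr A]\neq\emp$, so $U_i\cap U_j\neq\emp$, a contradiction.
\end{itemize}
With this claim, steps~3--5 go through. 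The result is a genuinely different proof from the paper's two-case argument, with the elementary submodel replacing the $\Delta$-system lemma.

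You can in fact avoid $M$ altogether. Pick cones $[t_i]\subset U_i$ and a set $D\supset\bigcup_{i<\kappa}\dom(t_i)$ of size $\kappa$. The projections $\set{x\restr D}{x\in U_i}$ are open and cover ${}^D2$, so by weak compactness some $I\in[\kappa]^{<\kappa}$ indexes a subcover. Take $\beta\notin I$ and any $h\in{}^D2$ extending $t_\beta$. Then $h=x\restr D$ for some $x\in U_i$ with $i\in I$, and $x\in[t_\beta]\subset U_\beta$, contradicting disjointness.
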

	\begin{proof}
		$(\imp)$ 
		If $\kappa$ is not weakly compact, then $(\kaka,\kappa)$ and $(\kacs,\kappa)$ are homeomorphic. Since $(\muka,\kappa)$ is the $\mu$-length product of $(\kaka,\kappa)$, and $(\mucs,\kappa)$ is the $\mu$-length  product of $(\kacs,\kappa)$, both with the ${<}\kappa$-box topology, it follows that $(\mucs,\kappa)$ and $(\muka,\kappa)$ are homeomorphic.
		
		$(\pmi)$ Suppose that $\kappa$ is weakly compact. We claim that the space $(\mucs,\kappa)$ has no open partition of size $\kappa$. Clearly $(\muka,\kappa)$ does have such open partition, making the spaces not homeomorphic. 
		
		Let $\set{X_\alpha}{\alpha\in\kappa}$ be an open cover of $(\mucs,\kappa)$  and, for each $\alpha$, choose $S_\alpha\subset \rm{Fn}_\kappa(\mu,2)$ of minimal cardinality such that $X_\alpha=\Cup_{s\in S_\alpha}[s]$. It suffices to find $\alpha,\beta\in\kappa$ such that $X_\alpha\cap X_\beta\neq\emp$. There are two cases to consider: 
		\begin{enumerate}[label=(\arabic*)]
			\item\label{thm:weakly compact homeomorphism:case1} $\card{S_\alpha}\leq\kappa$ for all $\alpha$, 
			\item\label{thm:weakly compact homeomorphism:case2} $\card{S_\alpha}>\kappa$ for some $\alpha$. 
		\end{enumerate}
		
		\textit{Case \ref{thm:weakly compact homeomorphism:case1}.}  Let $B:=\Cup_{s\in\Cup_{\alpha\in\kappa}S_\alpha}\dom(s)$,  then $\card{B}\leq \kappa$.  Working in the space $({}^{B}2,\kappa)$ we define $Y_\alpha=\Cup_{s\in S_\alpha}[s]$,  then $\set{Y_\alpha}{\alpha\in\kappa}$ is an open cover of ${}^{B}2$.  Since $\kappa$ is weakly compact and $({}^B2,\kappa)\cong(\kacs,\kappa)$, we find that ${}^{B}2$ is ${<}\kappa$-compact,  so there is $\beta<\kappa$ such that $\set{Y_\alpha}{\alpha\in\beta}$ covers ${}^{B}2$.  Hence $Y_\alpha\cap Y_\beta\neq\emp$ for some $\alpha<\beta$,  and thus $X_\alpha\cap X_\beta\neq\emp$.
		
		\textit{Case \ref{thm:weakly compact homeomorphism:case2}.} Fix some $\alpha$ such that $\card{S_\alpha}>\kappa$ and recursively construct a sequence $\abset{s_\xi}{\xi\in\kappa^+}$ of elements of $S_\alpha$ as follows. Given 
		\[
		A:=\set[big]{s_\eta\restr D}{\eta<\xi\text{\,, }D\subset\dom(s_\eta)\text{ and }\sqb{s_\eta\restr D}\subset X_\alpha},
		\]
		we have $\card{A}\leq\kappa$, thus $[A]:=\Cup_{s\in A}[s]$ is a proper subset of  $X_\alpha$. Then we define $s_\xi$ such that $s_\eta\restr D\in A$ implies $s_\xi\restr D\neq s_\eta\restr D$ for all $\eta<\xi$ and $D\in\sqb\mu^{<\kappa}$. By the $\Delta$-system lemma (see, e.g., \cite[Lemma III.6.15]{Kunen11}), we may find a root $R\in[\mu]^{<\kappa}$, a set of indices $\Xi\subset \kappa^+$ of size $\card{\Xi}=\kappa^+$ and $s^*\colon R\to 2$ such that $\dom(s_\xi)\cap \dom(s_{\xi'})=R$ and $s^*\subset s_\xi$ for all distinct $\xi,\xi'\in \Xi$. 
		Now note that the recursive construction implies $\sqb{s^*}\nsubset X_\alpha$,  therefore we can choose $x\in\sqb{s^*}\setminus X_\alpha$. Then $x\in X_\beta$ for some $\beta\neq \alpha$.  Let $t\in S_\beta$ be such that $x\in\sqb t$,  then there is $\xi\in \Xi$ such that $\dom(s_\xi\setminus s^*)\cap \dom(t)=\emp$ by the fact that the sets $\dom(s_\xi\setminus s^*)$ are mutually disjoint for different $\xi\in\Xi$.  Finally, $\dom(t)\cap \dom(s_\xi)\subset\dom(s^*)$,  so $\sqb{s_\xi}\cap\sqb{t}\neq\emp$.  Therefore we see that $X_\alpha\cap X_\beta\neq\emp$.
	\end{proof}

	\begin{thm}\label{thm:homeo mumu ka and muka ka}
		The spaces $(\muka,\kappa)$  and $(\mumu,\kappa)$ are homeomorphic if and only if $\mu\leq2^{<\kappa}$.
	\end{thm}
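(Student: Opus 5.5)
The plan is to prove both directions by splitting each space into products of discrete factors and copies of the regular higher Baire space $(\kaka,\kappa)$. Throughout, $D(\nu)$ denotes the discrete space of size $\nu$. Note that $(\mumu,\kappa)$ is the ${<}\kappa$-box product $\prod_{\mu}D(\mu)$ and $(\muka,\kappa)$ is $\prod_\mu D(\kappa)$. Products in the ${<}\kappa$-box topology are associative and commutative up to homeomorphism, so factors may be regrouped and reindexed freely. For instance, since $\mu\times\kappa$ has size $\mu$, we get $(\muka,\kappa)\cong\prod_\mu(\kaka,\kappa)$.

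\emph{($\Leftarrow$)} Assume $\mu\leq 2^{<\kappa}$, and write $\nu=2^{<\kappa}$.

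First I show $(\kaka,\kappa)\cong D(\nu)\times(\kaka,\kappa)$. It suffices to find an open partition of $\kaka$ into $\nu$ cones $[s]$ with $s\in\fkamu$ of bounded length, because each such cone is homeomorphic to $\kaka$. Such a partition is
\[
\set{[\ab{\alpha}\append t]}{\alpha<\kappa,\ t\in{}^{\lambda_\alpha}2},
\]
where $\abset{\lambda_\alpha}{\alpha<\kappa}$ enumerates the cardinals below $\kappa$ with each repeated cofinally often. Its size is $\sup_{\lambda<\kappa}2^\lambda=\nu$.

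The same argument with $\lambda_\alpha=0$ gives $(\kaka,\kappa)\cong D(\kappa)\times(\kaka,\kappa)$. Because $\kappa\leq\mu\leq\nu$ and $\mu\cdot\nu=\nu$, we obtain
\[
D(\mu)\times(\kaka,\kappa)\cong D(\mu)\times D(\nu)\times(\kaka,\kappa)\cong D(\nu)\times(\kaka,\kappa)\cong(\kaka,\kappa).
\]

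Next, $D(\mu)\cong D(\mu)\times D(\kappa)$, so $(\mumu,\kappa)\cong\mumu\times\muka$. Regrouping coordinate by coordinate gives
\[
(\mumu,\kappa)\cong\prod_\mu\bigl(D(\mu)\times(\kaka,\kappa)\bigr)\cong\prod_\mu(\kaka,\kappa)\cong(\muka,\kappa).
\]
The only care needed is checking that these regroupings respect the ${<}\kappa$-box topology. They do, because a set of ${<}\kappa$ coordinates of the regrouped product touches only ${<}\kappa$ coordinates of the original, and conversely.

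\emph{($\Rightarrow$)} Assume $\mu>2^{<\kappa}$. I distinguish the spaces by cellularity. The space $(\mumu,\kappa)$ has $\mu$ pairwise disjoint nonempty open sets, namely the cylinders $C_{0,\alpha}$ for $\alpha<\mu$. I claim that $(\muka,\kappa)$ has no family of $\theta:=(2^{<\kappa})^+$ pairwise disjoint nonempty open sets. Since $\mu$ is a limit cardinal, $\theta<\mu$, so this claim finishes the proof.

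The cardinal arithmetic needed for the $\Delta$-system lemma is $(2^{<\kappa})^{<\kappa}=2^{<\kappa}$. If $2^\lambda$ is eventually constant below $\kappa$, this is immediate. Otherwise $\cf(2^{<\kappa})=\kappa$, and for $\lambda<\kappa$ we get $(2^{<\kappa})^\lambda=\sup_{\lambda'<\kappa}(2^{\lambda'})^\lambda=2^{<\kappa}$. Hence $\theta$ is regular and $\alpha^{<\kappa}<\theta$ for all $\alpha<\theta$.

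Now suppose $\abset{U_\xi}{\xi<\theta}$ were pairwise disjoint nonempty open sets. Shrink each $U_\xi$ to a cone $[s_\xi]$ with $s_\xi\in\rm{Fn}_\kappa(\mu,\kappa)$. The $\Delta$-system lemma gives $\theta$ many $s_\xi$ whose domains form a $\Delta$-system with root $R\in[\mu]^{<\kappa}$. There are only $\kappa^{\card{R}}\leq2^{<\kappa}<\theta$ functions $R\to\kappa$, so two of these, $s_\xi$ and $s_{\xi'}$, agree on $R$. Then $s_\xi\cup s_{\xi'}$ is a function, so $[s_\xi]\cap[s_{\xi'}]\neq\emp$, which is a contradiction.

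I expect the main obstacle to be the bookkeeping in ($\Leftarrow$). One must check that $D(\mu)\times(\kaka,\kappa)\cong(\kaka,\kappa)$ genuinely uses $\mu\leq2^{<\kappa}$, through the partition of size $2^{<\kappa}$, and that the infinite product regroupings are homeomorphisms for the ${<}\kappa$-box topology.
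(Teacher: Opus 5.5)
Your ($\Rightarrow$) direction is the paper's argument. $(\mumu,\kappa)$ has $\mu$ pairwise disjoint cylinders, while the $\Delta$-system lemma, together with $(2^{<\kappa})^{<\kappa}=2^{<\kappa}$, bounds the cellularity of $(\muka,\kappa)$ by $2^{<\kappa}$.

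In ($\Leftarrow$) one step fails as written. The family $\{[\langle\alpha\rangle\append t] : \alpha<\kappa,\ t\in{}^{\lambda_\alpha}2\}$ is not a partition of $\kaka$, because it does not cover it. It misses every $f$ with $f(0)=\alpha$, $\lambda_\alpha\geq 1$ and $f(1)\geq 2$. The repair is to let $t$ range over ${}^{\lambda_\alpha}\kappa$. These cones are pairwise disjoint and cover $\kaka$, and there are $\kappa^{<\kappa}$ of them. This equals $2^{<\kappa}$ because $\kappa$ is regular: each $t\in{}^{\lambda}\kappa$ with $\lambda<\kappa$ lies in some ${}^{\lambda}\beta$ with $\beta<\kappa$, and $|{}^{\lambda}\beta|\leq 2^{|\beta|\cdot\lambda}\leq 2^{<\kappa}$. (Also, these cones are indexed by ${}^{<\kappa}\kappa$, not $\fkamu$.) With this change the rest of your argument goes through.

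Your ($\Leftarrow$) takes a different route from the paper's. The paper shows $(\kamu,\kappa)\cong(\kaka,\kappa)$ directly by reusing the method of Theorem~\ref{thm:homeo for bounded}. When $\mu\leq 2^{<\kappa}$, both spaces have open partitions of size $2^{<\kappa}$ (for $\kamu$ because then $\mu^{<\kappa}=2^{<\kappa}$). One then builds $\kappa$-sequences of refining open partitions whose unions are order-isomorphic bases, and concludes by taking $\mu$-fold ${<}\kappa$-box powers. You instead isolate the absorption identity $D(\mu)\times(\kaka,\kappa)\cong(\kaka,\kappa)$, obtained from a single partition of $\kaka$ into $2^{<\kappa}$ clopen copies of itself. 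You then finish by regrouping coordinates of ${<}\kappa$-box products, which is where regularity of $\kappa$ enters. Both routes rest on the same input, namely that $(\kaka,\kappa)$ splits into $2^{<\kappa}$ cones. Yours is more self-contained: it avoids the level-by-level construction of isomorphic bases, including its limit stages, at the cost of the product bookkeeping, which you handle correctly. The paper's version has the advantage of reusing an already established technique.
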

	\begin{proof}
		$(\pmi)$ Assume that $\mu\leq2^{<\kappa}$, then we show that $(\kaka,\kappa)$ and $(\kamu,\kappa)$ are homeomorphic. Namely, if $\mu\leq2^{<\kappa}$, then both $(\kaka,\kappa)$ and $(\kamu,\kappa)$ have an open partition of size $2^{<\kappa}$, therefore we can apply the method from the proof of \cref{thm:homeo for bounded} to show that $(\kamu,\kappa)$ and $(\kaka,\kappa)$ are homeomorphic. This makes both $(\muka,\kappa)$ and $(\mumu,\kappa)$ equal to $\mu$-length products of the same space, and thus shows that $(\muka,\kappa)$ and $(\mumu,\kappa)$ are homeomorphic as well.

		$(\imp)$ Assume that $2^{<\kappa}<\mu$, then a standard application of the $\Delta$-system lemma shows that $(\muka,\kappa)$ has no family of $(2^{<\kappa})^+$-many mutually disjoint open sets, whereas $(\mumu,\kappa)$ clearly does.\footnote{The $\Delta$-system lemma can be applied, since $\kappa\leq\lambda^+$ are regular cardinals and, by regularity of $\kappa$, $(2^{<\kappa})^{<\kappa}=2^{<\kappa}$.}
	\end{proof}
	
	We make some notes on the consistency of homeomorphisms between the spaces with the ${<}\kappa$-box topology. As uncountable weakly compact cardinals are a large cardinal notion, $\sf{ZFC}$ alone cannot prove the existence of an uncountable weakly compact cardinal. If $\kappa$ is uncountable, then $\mu\leq2^{<\kappa}$ is independent of $\sf{ZFC}$: it is false under $\sf{GCH}$, but holds in the model obtained by adding $\mu^+$-many Cohen generic subsets of $\omega$.  If $\kappa=\omega$ is countable, then $2^{<\kappa}=\omega$, and therefore $2^{<\kappa}<\mu$ holds under $\sf{ZFC}$. Finally we note that $\mu\leq2^{<\kappa}$ is incompatible with $\kappa$ being weakly compact, since weakly compact cardinals are strong limit cardinals, thus if $(\mumu,\kappa)$ is homeomorphic to $(\muka,\kappa)$, then it is also homeomorphic to $(\mucs,\kappa)$.

	\subsection{\texorpdfstring{The ${<}\mu$-box topology spaces}{The <μ-box topology spaces}}
	Finally we turn to the spaces with the ${<}\mu$-box topology.

	\begin{thm}\label{thm:homeo between mucs mu and muka mu}
		The spaces $(\mucs,\mu)$ and $(\muka,\mu)$ are homeomorphic.
	\end{thm}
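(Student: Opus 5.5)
The plan is to show that both spaces are, up to homeomorphism, the same ${<}\mu$-box product of $\mu$ copies of a discrete space of size $2^\kappa$. The idea is to group coordinates into blocks of size $\kappa$. Since $\kappa<\mu$, a single block has fewer than $\mu$ coordinates, so in the ${<}\mu$-box topology it may be fixed entirely, and each block factor therefore becomes discrete.

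Fix a bijection $\pi\colon\mu\times\kappa\to\mu$. It induces a bijection
\[
\Phi_2\colon \mucs\to\prod_{\alpha<\mu}\kacs,\qquad \Phi_2(x)(\alpha)(\beta)=x(\pi(\alpha,\beta)).
\]
Define $\Phi_\kappa\colon\muka\to\prod_{\alpha<\mu}\kaka$ in the same way. Give each factor $\kacs$ and $\kaka$ the discrete topology, and give the products the ${<}\mu$-box topology. Its basic open sets fix the values at fewer than $\mu$ indices $\alpha$; since the factors are discrete, these are exactly the sets obtained by fixing ${<}\mu$ many blocks completely.

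The first key step is to check that $\Phi_2$ is a homeomorphism from $(\mucs,\mu)$ onto this product.
\begin{itemize}
\item Let $[s]$ be a basic open set with $\card{\dom(s)}<\mu$. Its domain meets the set of blocks $\set{\alpha}{\exists\beta\ \pi(\alpha,\beta)\in\dom(s)}$, which has size ${<}\mu$. So the image of $[s]$ is a union of basic sets of the block product, namely all ways of completing $s$ on each touched block, and hence it is open.
\item Conversely, a basic set of the block product fixes ${<}\mu$ many blocks completely. That amounts to fixing $\lambda\cdot\kappa$ coordinates for some $\lambda<\mu$. Because $\mu$ is a limit cardinal and $\kappa<\mu$, we have $\lambda\cdot\kappa<\mu$. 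So its preimage is a cone $[s]$ with $s\in\rm{Fn}_\mu(\mu,2)$.
\end{itemize}
The identical argument works for $\Phi_\kappa$ on $(\muka,\mu)$.

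Finally, $\card{\kacs}=2^\kappa=\kappa^\kappa=\card{\kaka}$. Fix a bijection $g\colon\kacs\to\kaka$. Since the factors are discrete, $g$ is a homeomorphism of factors. Applying $g$ coordinatewise gives a map $\prod_{\alpha<\mu}\kacs\to\prod_{\alpha<\mu}\kaka$ that sends basic open sets (finitely supported on ${<}\mu$ indices) exactly onto basic open sets, so it is a homeomorphism of the ${<}\mu$-box products. Composing, $\Phi_\kappa^{-1}\circ\bar g\circ\Phi_2$ is the required homeomorphism.

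The only delicate point is the block-regrouping step: one must verify that ${<}\mu$ many blocks of size $\kappa$ still amount to ${<}\mu$ coordinates. This uses only that $\mu$ is a limit cardinal above $\kappa$, and it holds regardless of the singularity of $\mu$. No cardinal-arithmetic hypothesis enters, which is consistent with the statement being a $\sf{ZFC}$ theorem.
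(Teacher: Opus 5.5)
Your proof is correct and is essentially the paper's argument. The paper takes the concrete block decomposition of $\mu$ into the $\kappa$-intervals $[\kappa\alpha,\kappa\alpha+\kappa)$, observes that the cones whose domains are unions of such intervals form a base in both spaces, and then uses $2^\kappa=\kappa^\kappa$ blockwise. You carry out the same regrouping with an arbitrary bijection $\pi$ and write out the verifications that the paper leaves as obvious.
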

	\begin{proof}
		Consider, in both spaces, the family of basic sets \[\set{\sqb s}{\dom(s)\text{ is a union of }\kappa\text{-intervals}},\] where we call a subset of $\mu$ a \emph{$\kappa$-interval} if it is of the form $\iv{\kappa\alpha}{\kappa\alpha+\kappa}$ for some $\alpha\in\mu$. It is easy to see that these two families form a base for their respective space, and since $2^\kappa=\kappa^\kappa$, there is an obvious homeomorphism.
	\end{proof}
	
	\begin{thm}\label{thm:homeo between mumu mu and muka mu}
		The spaces $(\mucs,\mu)$ and $(\mumu,\mu)$ are homeomorphic if and only if $\mu$ is not strong limit.
	\end{thm}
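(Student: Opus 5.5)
The plan is to treat the two directions separately, in the spirit of the proof of \cref{thm:homeo between mucs mu and muka mu}: a blockwise bijection for $(\pmi)$ and a cellularity ($\Delta$-system) argument for $(\imp)$.

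$(\pmi)$ Suppose $\mu$ is not a strong limit, and fix $\lambda<\mu$ with $2^\lambda\geq\mu$. Enlarging $\lambda$ if necessary, we may assume $\kappa\leq\lambda$. Then $\mu^\lambda=2^\lambda$, for instance by the cardinal arithmetic used in the corollary above. Call a set a \emph{$\lambda$-interval} if it is of the form $\iv{\lambda\alpha}{\lambda\alpha+\lambda}$. In both $(\mucs,\mu)$ and $(\mumu,\mu)$, consider the cones $\sqb s$ whose domain is a union of ${<}\mu$-many $\lambda$-intervals.
\begin{itemize}
\item These cones form a base. If $\card{\dom(s)}<\mu$, then the union of all $\lambda$-intervals meeting $\dom(s)$ has size at most $\card{\dom(s)}\cdot\lambda<\mu$.
\item Fix a bijection $\pi\colon{}^\lambda2\to{}^\lambda\mu$, which exists since $2^\lambda=\mu^\lambda$.
\item Apply $\pi$ on every $\lambda$-interval. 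This gives a bijection $\mucs\to\mumu$ that sends the base above onto the corresponding base, so it is a homeomorphism.
\end{itemize}

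$(\imp)$ Suppose $\mu$ is a strong limit. I will separate the spaces by cellularity, i.e.\ the supremum of the sizes of families of pairwise disjoint nonempty open sets.

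In $(\mumu,\mu)$ the cones $\sqb s$ for $s\in\kamu$ (domain $\kappa$) are pairwise disjoint. There are $\mu^\kappa>\mu$ of them by König's theorem, since $\cf(\mu)=\kappa$.

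It remains to show that $(\mucs,\mu)$ has no family of $\mu^+$ pairwise disjoint nonempty open sets. Suppose $\set{U_\xi}{\xi<\mu^+}$ were such a family.
\begin{itemize}
\item Shrink each $U_\xi$ to a cone $\sqb{s_\xi}$ with $\card{\dom(s_\xi)}<\mu$.
\item Fix a cofinal sequence $\abset{\lambda_i}{i<\kappa}$ in $\mu$. Since $\mu^+$ is regular and greater than $\kappa$, there are $i$ and a set $I$ of size $\mu^+$ with $\card{\dom(s_\xi)}\leq\lambda_i=:\lambda$ for all $\xi\in I$.
\item Let $\theta=(2^\lambda)^+<\mu$, which holds because $\mu$ is a strong limit. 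Then $\theta$ is regular and $\nu^\lambda\leq 2^\lambda<\theta$ for all $\nu<\theta$.
\item By the $\Delta$-system lemma, any $\theta$ of the $s_\xi$ with $\xi\in I$ contain $\theta$ many whose domains form a $\Delta$-system with root $R$.
\item There are only $2^{\card R}\leq2^\lambda<\theta$ possible restrictions $s_\xi\restr R$. So two of these functions agree on $R$, hence are compatible, and their cones intersect, which is a contradiction.
\end{itemize}
Hence the cellularity of $(\mucs,\mu)$ is at most $\mu$, so the two spaces are not homeomorphic.

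The main obstacle is this cellularity bound. The delicate point is to apply the $\Delta$-system lemma separately at each fixed size $\lambda<\mu$, rather than to domains of size ${<}\mu$ directly, since $\mu^{<\mu}>\mu$ would break the hypothesis of the lemma. The strong limit assumption is exactly what makes $2^\lambda<\mu$ available.
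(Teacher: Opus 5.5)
Your proof is correct. Direction $(\pmi)$ is the paper's argument written out explicitly: via a bijection ${}^\lambda2\to{}^\lambda\mu$ applied on $\lambda$-intervals, both spaces become the ${<}\mu$-box product of $\mu$ copies of the same discrete space.

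For $(\imp)$ you take a genuinely different route. The paper remarks that the $\Delta$-system lemma does not apply because $\mu$ is singular. It reduces to antichains $\mathcal A$ of cones whose domains all have size $\lambda$. It then builds a tree of height at most $\lambda^++1$ in which each node has at most $2^\lambda$ successors, and concludes $|\mathcal A|\le 2^{\lambda^+}\le 2^{<\mu}\le\mu$.

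You observe that the obstruction only concerns applying the lemma at $\mu^+$ to domains of size ${<}\mu$. Once the domain size is fixed at $\lambda$, the lemma does apply at the regular cardinal $\theta=(2^\lambda)^+$, which lies below $\mu$ by the strong limit assumption. Counting the $2^\lambda$ possible restrictions to the root then finishes the argument. This is just the standard fact that $\mathrm{Fn}_{\lambda^+}(\mu,2)$ has the $(2^\lambda)^+$-chain condition.

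Your version is shorter and rests on a textbook lemma. It also gives the sharper bound $2^\lambda$, instead of $2^{\lambda^+}$, for families of pairwise disjoint cones with domains of size at most $\lambda$. The paper's tree argument is self-contained and avoids checking the arithmetic hypotheses of the $\Delta$-system lemma.

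One small point to make explicit: distinct $s_\xi$ may share a domain. Either use the indexed form of the $\Delta$-system lemma, as the paper does in the proof of \cref{thm:weakly compact homeomorphism}, or observe that $\theta$ many $s_\xi$ with a common domain $D$ would contain two equal functions, since $2^{|D|}<\theta$.
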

	\begin{proof}
		$(\imp)$ Suppose $\mu$ is strong limit, that is, $2^\lambda<\mu$ for all $\lambda<\mu$. We show that $(\mucs,\mu)$ does not have a family of mutually disjoint open sets of size $\mu^+$. This suffices, since in $(\mumu,\mu)$, the set $\set{\sqb s}{s\in\kamu}$ is an antichain of size $\mu^\kappa\geq\mu^+$. The $\Delta$-system lemma would be the natural choice to prove this, but does not apply, since $\mu$ is singular. We therefore prove the claim directly.
		
		Let $\cal A\subset\rm{Fn}_\mu(\mu,2)$ be a family of partial functions such that the basic open sets $\set{\sqb s}{s\in\cal A}$ are mutually disjoint. Without loss we may assume that there is some $\lambda<\mu$ such that $\card{s}=\lambda$ for all $s\in \cal A$. We fix a well-order $\prec$ on $\cal A$. Because none of the functions in $\cal A$ are compatible with each other, for any distinct $s,s'\in\cal A$ we have $D=\dom(s)\cap \dom(s')\neq\emp$ and $s\restr D\neq s'\restr D$.
		
		Construct a tree $T$ consisting of all sequences $\abset{(A_\alpha,s_\alpha,f_\alpha)}{\alpha\in\gamma}$, where $\gamma$ is an ordinal, with the following properties:
		\begin{enumerate}[label=(\roman*)]
			\item $\emp\neq A_\alpha\subset\cal A$ for each $\alpha\in\gamma$,
			\item $A_0=\cal A$,
			\item $s_\alpha$ is the $\prec$-minimal element of $A_\alpha$ for each $\alpha\in\gamma$
			\item\label{mukamu mumumu homeo: D set} $f_\alpha\in\rm{Fn}_\mu(\mu,2)$ is a function with $\dom(f_\alpha)=\dom(s_\alpha)\setminus\Cup_{\xi\in\alpha}\dom(s_\xi)$ and $f_\alpha\neq s_\alpha\restr \dom(f_\alpha)$ for each $\alpha\in\gamma$,
			\item $A_\alpha= \Cap_{\xi\in\alpha}A_\xi$ for each limit $\alpha\in\gamma$,
			\item $A_{\alpha+1}=\set{s\in A_\alpha}{f_\alpha\restr\dom(s)\subset s}$ for each successor $\alpha+1\in\gamma$.
		\end{enumerate}
		If $t=\abset{(A_\alpha,s_\alpha,f_\alpha)}{\alpha\in\gamma}\in T$, we will write $(A^t_\alpha,s^t_\alpha,f^t_\alpha)$ for the elements of the sequence $t$. We will also write $T_\gamma=\set{t\in T}{\ot(t)=\gamma}$. Let us make some observations about $T$.
		
		\begin{enumerate}[label=(\arabic*)]
			\item 
			If $t\in T_{\alpha+1}$ for some $\alpha$, then 
			\[
			\Cup\set{A_{\alpha+1}^{t'}}{t\subset t'\in T_{\alpha+2}}=A_\alpha^t\setminus\ac{s^t_\alpha},
			\]
			because for every $s\in A_\alpha^t\setminus\ac{s_\alpha^t}$ we have by construction that  $s\restr\Cup_{\xi\in\alpha}\dom(s_\xi^t)$ is compatible with $s_\alpha^t\restr\Cup_{\xi\in\alpha}\dom(s_\xi^t)$, and thus there is $\beta\in (\dom(s)\cap \dom(s_\alpha^t))\setminus \Cup_{\xi\in\alpha}\dom(s_\xi^t)$ for which $s(\beta)\neq s_\alpha^t(\beta)$. In particular, it follows from this that for every $s\in \cal A$ there is some $\alpha<\card{\cal A}^+$ and $t\in T_{\alpha+1}$ such that $s=s_\alpha^t$. Thus $\card{T}\geq\card{\cal A}$.
			\item We claim that $T$ has height $\leq \lambda^++1$. Otherwise, if $\abset{(A_\alpha,s_\alpha,f_\alpha)}{\alpha\leq\lambda^+}\in T_{\lambda^++1}$, then $\dom(s_{\lambda^+})\cap \dom(f_\alpha)\neq\emp$ for all $\alpha<\lambda^+$. But $\dom(f_\alpha)\cap \dom(f_\beta)=\emp$ for all $\alpha<\beta$, and thus we would have $\card{s_{\lambda^+}}=\lambda^+$, a contradiction. Therefore we have $T_{\lambda^++1}=\emp$.
			
			\item Note that $\card{\dom(f_\alpha)}\leq\lambda$. Moreover, if $t\in T_{\alpha}$ and $t',t''$ are distinct successors of $t$, then $A_{\alpha}^{t'}=A_{\alpha}^{t''}$, hence $s_\alpha^{t'}=s_\alpha^{t''}$ and thus we see that $f_\alpha^{t'}\neq f_\alpha^{t''}$. Therefore, each $t\in T$ has at most $2^\lambda$ successors. 
		\end{enumerate}
		Each node of $T$ splits into at most $2^\lambda$ successors and $T$ has height at most $\lambda^++1$, therefore
		\[
		\card{\cal A}\leq \card{T}\leq (2^\lambda)^{\lambda^+}=2^{\lambda^+}\leq 2^{<\mu}\leq\mu.
		\]

		$(\pmi)$ Let $\lambda<\mu$ be such that $2^\lambda\geq \mu$, then $2^\lambda=\mu^\lambda$, and thus $(\lacs,\mu)$ and $(\lamu,\mu)$ are homeomorphic (and discrete). This makes both $(\mucs,\mu)$ and $(\mumu,\mu)$ the $\mu$-length product with the ${<}\mu$-box topology of the same space.
	\end{proof}
	
	\begin{rmk}\label{rmk:spaces}
		Disregarding the discrete space $(\kamu,\mu)$, there are (up to homeomorphism) seven distinct spaces we will be considering. These are:
		\begin{enumerate}
			\item $\kamu$ and $\mumu$ with the bounded topology, 
			\item $\mucs$, $\muka$ and $\mumu$ with the ${<}\kappa$-box topology, and 
			\item $\mucs$ and $\mumu$ with the ${<}\mu$-box topology. 	
		\end{enumerate} 
		Remember that, although $(\mucs,\rm{bd})$ is not provably homeomorphic to any of these spaces in $\sf{ZFC}$, it is provably homeomorphic to one of $(\kamu,\rm{bd})$ and $(\mumu,\rm{bd})$, and thus we will not need to consider this space separately. 
	\end{rmk}

	\section{Category and its additivity and covering numbers}\label{sec:Covering}	
	
	Let us first review the classical notion of Baire category.
	Let $X$ be a topological space. A subset $N\subset X$ is \emph{nowhere dense} if every nonempty open $U$ has a nonempty open $V\subset U$ such that $N\cap V=\emp$, or equivalently if the interior of the closure of $N$ is empty. In particular, each nowhere dense is contained in a closed nowhere dense set, and the complement of a closed nowhere dense set is open dense. A set is called \emph{meagre} (or \emph{of first category}) if it is the countable union of nowhere dense sets, and otherwise it is called \emph{nonmeagre} (or \emph{of second category}). The complement of a meagre set is called \emph{comeagre}. The (classical) Baire category theorem states that, in any completely metrisable space, the countable intersection of open dense sets is dense. In other words, the Baire category theorem is equivalent to all comeagre sets being dense.
	
	When the Baire category theorem (and with it the notion of category) are generalised to the higher context, we may often prove a stronger theorem, in which countable intersections are replaced by intersections of a larger family of open dense sets. For a cardinal $\lambda$, let us call a set \emph{$\lambda$-meagre} if it is the union of $\lambda$-many nowhere dense sets. Exactly, if we consider the higher Baire space $\lala$ where $\lambda$ is regular, then the intersection of $\lambda$-many open dense subsets of $\lala$ is dense, or equivalently every $\lambda$-comeagre subset of $\lala$ is dense. We will see that in the higher context for a singular cardinal $\mu$, the cofinality $\cf(\mu)=\kappa$ plays the role of $\lambda$. We first establish that $\kappa$-meagre is the right notion to obtain something strictly more general than nowhere denseness. The proof of the following fact is elementary.
	
	\begin{fct}
		Let  $(X,\tau)$ be a space considered in \cref{rmk:spaces}. Then $\kappa$ is the least cardinal such that there exists a family of $\kappa$-many nowhere dense subsets of $X$ whose union is dense.\qed
	\end{fct}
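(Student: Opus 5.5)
Two things have to be shown for each space $(X,\tau)$ from \cref{rmk:spaces}: first, that some family of $\kappa$-many nowhere dense sets has dense union; second, that for every $\lambda<\kappa$ the union of $\lambda$-many nowhere dense sets is never dense. Since every nowhere dense set lies inside a closed nowhere dense set, the second claim is the same as saying that the intersection of fewer than $\kappa$ open dense sets is dense. Each of the spaces has a base of cones $[s]$, where $s$ ranges over $\rm{Fn}_\kappa(\delta,\rho)$, over $\fderh$, or over $\rm{Fn}_\mu(\delta,\rho)$. In every case the family of allowed $s$ is closed under unions of increasing chains of length ${<}\kappa$. For the bounded topology this uses $\cf(\delta)=\kappa$: fewer than $\kappa$ bounded domains have a bounded union. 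For the ${<}\kappa$-box and ${<}\mu$-box topologies it uses the regularity of $\kappa$, resp. $\kappa<\mu$ together with the singularity bookkeeping $\lambda<\kappa=\cf(\mu)$.

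\emph{Lower bound.} Let $\lambda<\kappa$, let $\abset{D_\xi}{\xi<\lambda}$ be open dense sets, and fix a nonempty basic cone $[s_0]$. We build a decreasing sequence of cones recursively. At a successor step we pick $s_{\xi+1}\supseteq s_\xi$ with $[s_{\xi+1}]\subset D_\xi\cap[s_\xi]$, which is possible because $D_\xi$ is open dense. At a limit step we take the union; by the closure property this is again an allowed $s$, and the cone is nonempty because the functions are compatible (extend by zeros). The final cone lies inside $[s_0]\cap\Cap_{\xi<\lambda}D_\xi$. Hence any union of ${<}\kappa$ nowhere dense sets has empty interior and in particular is not dense. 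This direction is routine.

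\emph{Upper bound.} Here we need $\kappa$ nowhere dense sets whose union is dense, and I would make the union the whole space. Fix an increasing cofinal sequence $\abset{\mu_\alpha}{\alpha<\kappa}$ in $\mu$, with $\mu_\alpha$ regular cardinals above $\kappa$. For $(\kamu,\rm{bd})$ take $N_\alpha=\set{f}{f(\alpha)<\mu_\alpha}$. Each $N_\alpha$ is closed, and it is nowhere dense because any bounded $s$ extends to some $t$ with $\alpha\in\dom(t)$ and $t(\alpha)\ge\mu_\alpha$. Since $\kappa$ is regular, no single coordinate can be avoided in this way, so I use coordinatewise growth instead. Put $N_\alpha=\set{f}{\forall\beta\geq\alpha\,(f(\beta)<\mu_\alpha)}$, which is closed. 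It is nowhere dense since $s$ has bounded domain and we can set a coordinate beyond $\dom(s)\cup\alpha$ to be $\ge\mu_\alpha$. The union of the $N_\alpha$ is not everything, but it contains every eventually bounded $f$, and these are dense in the bounded topology, so the union is dense.

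For the spaces with domain $\mu$, let $N_\alpha$ be the set of $f$ that are constantly $0$ on the interval $[\mu_\alpha,\mu)$. This set is closed and nowhere dense in all three topologies: every basic $s$ in any of them has $\card{\dom(s)}<\mu$, so there is a coordinate $\gamma\ge\mu_\alpha$ outside $\dom(s)$, and we may set $f(\gamma)=1$. The union $\Cup_\alpha N_\alpha$ is the set of functions with bounded support, and it is dense because every basic cone contains an element of it. This works provided every basic $s$ has bounded domain, which holds in $(\mumu,\rm{bd})$. For the box topologies a single $s$ can have unbounded domain of size ${<}\mu$, so there I will redefine $N_\alpha$ as the set of $f$ whose support meets $[\mu_\alpha,\mu)$ only inside a set of size ${<}\mu_\alpha$. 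Equivalently, take $N_\alpha=\set{f}{\card{\set{\gamma\ge\mu_\alpha}{f(\gamma)\ne0}}<\mu_\alpha}$ after replacing it by its closure. The case I expect to be the main obstacle is $(\mucs,\kappa)$ and $(\muka,\kappa)$, where the basic cones have fewer than $\kappa$ coordinates and the sets $N_\alpha$ must be checked to have dense union while each remains nowhere dense. There I would instead use $N_\alpha=\set{f}{f\restr[\mu_\alpha,\mu_{\alpha+1})\equiv0}$ indexed along $\kappa$. Since $\kappa$ is regular, each basic $s$ of size ${<}\kappa$ misses some whole block, so density of the union is immediate, and each $N_\alpha$ is closed nowhere dense.
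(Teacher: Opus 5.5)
Your two-part scheme is the natural one (the paper calls the fact elementary and gives no proof), and the lower-bound recursion is sound. Its last sentence, however, claims less than the recursion proves and draws a wrong inference: a set with empty interior can still be dense (think of $\mathbb Q\subset\mathbb R$). What the recursion actually yields is a nonempty cone $[s_\lambda]\subset[s_0]$ inside $\Cap_{\xi<\lambda}D_\xi$, hence disjoint from $\Cup_{\xi<\lambda}N_\xi$. So that union is nowhere dense, which is what you need.

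There are two smaller slips in the upper bound. First, making the union the whole space is impossible. Running the same recursion for $\kappa$ steps (the union of the chain still has a total extension) shows that no space in \cref{rmk:spaces} is a union of $\kappa$ nowhere dense sets. Second, your discarded first attempt $\{f : f(\alpha)<\mu_\alpha\}$ is a nonempty clopen set. Your nowhere-density argument for it fails whenever $\alpha\in\dom(s)$ and $s(\alpha)<\mu_\alpha$; that, not regularity of $\kappa$, is why it has to go.

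The genuine gap is the space $(\mumu,\kappa)$. You assign the sets $N_\alpha=\{f : |\{\gamma\geq\mu_\alpha : f(\gamma)\neq 0\}|<\mu_\alpha\}$ to ``the box topologies'', and you single out only $(\mucs,\kappa)$ and $(\muka,\kappa)$ for the block sets. So $(\mumu,\kappa)$ gets the support-size sets. In any ${<}\kappa$-box topology these $N_\alpha$ are dense, not nowhere dense. Every basic $[t]$ with $|\dom(t)|<\kappa$ contains $t$ extended by zeros, whose support has size ${<}\kappa<\mu_\alpha$.

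The underlying error is your claim that a basic set in a box topology can have unbounded domain. That holds for the ${<}\mu$-box topology but fails for the ${<}\kappa$-box topology, because a subset of $\mu$ of size ${<}\kappa=\cf(\mu)$ is bounded. Consequently your tail-zero sets $\{f : f\restr[\mu_\alpha,\mu)\equiv 0\}$ already work for all three ${<}\kappa$-box spaces, with exactly the argument you gave for $(\mumu,\mathrm{bd})$. Your block family also works, since its argument applies verbatim to $\mumu$. So $(\mucs,\kappa)$ and $(\muka,\kappa)$ are no obstacle at all.

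The support-size sets are needed only for $(\mucs,\mu)$ and $(\mumu,\mu)$, and there they do work. Each $N_\alpha$ is closed. Every basic $[s]$ contains a cone $[t]$ disjoint from $N_\alpha$, obtained by letting $t$ take the value $1$ on $\mu_\alpha$-many new coordinates above $\mu_\alpha$. Finally, $s$ extended by zeros lies in $N_\alpha$ once $\mu_\alpha>|\dom(s)|$, so the union is dense.
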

	
	One way to describe the statement of the above fact, is that the nowhere dense ideal of $(X,\tau)$ is not \emph{$\kappa$-additive}. This leads us to the definition of four cardinal characteristics associated with an ideal $\cal I$ on a space $X$:
	
	\begin{dfn}\label{dfn:characteristics}
		Let $\cal I\subset \cal P(X)$ be a proper ideal on $X$, then we define the following cardinal characteristics.
		
		\begin{enumerate}
			\item The \emph{additivity number} $\add(\cal I)$ is the least size of a set $A\subset\cal I$ such that $\Cup A\notin\cal I$.
			\item The \emph{covering number} $\cov(\cal I)$ is the least size of a set $C\subset\cal I$ such that $\Cup C=X$,
			\item The \emph{uniformity number} $\non(\cal I)$ is the least size of a set $N\subset X$ with $N\notin\cal I$,
			\item The \emph{cofinality number} $\cof(\cal I)$ is the least size of a set $F\subset\cal I$ such that every $I\in \cal I$ has some $J\in F$ with $I\subset J$.
		\end{enumerate}
		It is well-known as well as easy to see that $\add(\cal I)\leq\cov(\cal I),\non(\cal I)$ and $\cov(\cal I),\non(\cal I)\leq \cof(\cal I)$.
	\end{dfn}
	
	For the space $(X,\tau)$, we define $\MXtau$ to be the family of $\kappa$-meagre sets. By definition it is clear that $\MXtau$ is a ${\leq}\kappa$-complete ideal, i.e., it is a family closed under subsets and unions of size $\kappa$. It is also easy to see that each of the spaces is not $\kappa$-meagre in itself, and thus the ideal $\MXtau$ is proper. In particular, the following theorem follows.
	\begin{thm}\label{additivity lower bound}
		Let  $(X,\tau)$ be a space considered in \cref{rmk:spaces}. Then $\add(\MXtau)\geq\kappa^+$.
	\end{thm}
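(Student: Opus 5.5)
The inequality $\add(\MXtau)\geq\kappa^+$ says that the union of any $\kappa$-many $\kappa$-meagre sets is again $\kappa$-meagre, and that $\MXtau$ is a proper ideal. I would prove the two parts separately.

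\emph{Closure under $\kappa$-unions.} Let $\abset{M_\alpha}{\alpha\in\kappa}$ be $\kappa$-meagre and write $M_\alpha=\Cup_{\beta\in\kappa}N_{\alpha,\beta}$ with each $N_{\alpha,\beta}$ nowhere dense. Then $\Cup_\alpha M_\alpha=\Cup_{(\alpha,\beta)\in\kappa\times\kappa}N_{\alpha,\beta}$. Since $\card{\kappa\times\kappa}=\kappa$, this is a union of $\kappa$-many nowhere dense sets, hence $\kappa$-meagre. This step is only bookkeeping.

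\emph{Properness.} The real content is showing $X\notin\MXtau$. I will prove a $\kappa$-Baire category theorem: if $\abset{D_\alpha}{\alpha\in\kappa}$ are open dense subsets of $(X,\tau)$ and $[s]$ is a basic open set, then $[s]\cap\Cap_\alpha D_\alpha\neq\emp$. Let $\delta$ be the domain of $X$ (either $\kappa$ or $\mu$, both of cofinality $\kappa$). Fix an increasing cofinal sequence $\abset{\gamma_\alpha}{\alpha\in\kappa}$ in $\delta$. I then build a descending sequence of basic conditions $s=s_0\subset s_1\subset\cdots$ by recursion on $\alpha<\kappa$, with the following requirements.
\begin{enumerate}
\item Each $s_{\alpha+1}$ extends $s_\alpha$, satisfies $[s_{\alpha+1}]\subset D_\alpha$, and has $\gamma_\alpha\subset\dom(s_{\alpha+1})$. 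The last requirement is only needed for the bounded topology; the domain stays bounded after a further extension.
\item At limits $\alpha<\kappa$, I set $s_\alpha=\Cup_{\xi<\alpha}s_\xi$.
\end{enumerate}

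The main obstacle is the limit stage: the union $s_\alpha$ must still be an allowed basic condition. I check this topology by topology.
\begin{itemize}
\item For the ${<}\kappa$-box topology, each domain has size ${<}\kappa$ and $\alpha<\kappa$ with $\kappa$ regular, so the union has size ${<}\kappa$.
\item For the bounded topology, the domains are bounded in $\delta$, and a union of ${<}\kappa=\cf(\delta)$ bounded sets is bounded.
\item For the ${<}\mu$-box topology, I must control sizes, because ${<}\kappa$ many sets of size ${<}\mu$ may have union of size $\mu$ when $\alpha$ is large. I fix a cofinal sequence $\abset{\mu_\alpha}{\alpha\in\kappa}$ of cardinals in $\mu$. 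At each successor step I first extend to meet $D_\alpha$ and then record $\card{\dom(s_{\alpha+1})}$, noting that it is below $\mu$. At a limit $\alpha<\kappa$, the union has size at most $\sup_{\xi<\alpha}\card{\dom(s_\xi)}$. This is ${<}\mu$ because it is a supremum of ${<}\kappa=\cf(\mu)$ cardinals below $\mu$.
\end{itemize}
At each successor step, density of $D_\alpha$ gives a basic set inside $[s_\alpha]\cap D_\alpha$, and I extend arbitrarily to cover $\gamma_\alpha$ where needed.

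Finally I take $f=\Cup_{\alpha<\kappa}s_\alpha$ and extend it arbitrarily to a total function on $\delta$. Then $f\in[s_{\alpha+1}]\subset D_\alpha$ for every $\alpha$, and $f\in[s]$. Hence every $\kappa$-comeagre set is dense, so in particular $X$ is not $\kappa$-meagre.

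Combining the two parts, $\MXtau$ is a proper ${\leq}\kappa$-complete ideal, which gives $\add(\MXtau)\geq\kappa^+$.
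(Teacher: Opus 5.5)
Your proof is correct and follows the paper's route: $\MXtau$ is ${\leq}\kappa$-complete (by reindexing over $\kappa\times\kappa$) and proper, where the paper only asserts properness as easy and you verify it with a length-$\kappa$ recursion proving a $\kappa$-Baire category theorem. One small slip: at limit stages of the ${<}\mu$-box case, the union is bounded by $\card{\alpha}\cdot\sup_{\xi<\alpha}\card{\dom(s_\xi)}$, not by the supremum alone; this is still ${<}\mu$ since $\alpha<\kappa=\cf(\mu)$, and for the same reason ${<}\kappa$ many sets of size ${<}\mu$ can never have union of size $\mu$, so the auxiliary sequence $\abset{\mu_\alpha}{\alpha\in\kappa}$ is unnecessary.
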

	
	For most of the spaces from \cref{rmk:spaces}, the covering number of the $\kappa$-meagre ideal is at most, and hence equal to, $\kappa^+$. Our preferred method to prove this  concerns the forcing notion defined from the topology. 
	
	For a space $X$ with topology $\tau$, we define $\CXtau$ as the forcing notion consisting of open sets ordered by the subset relation. Since the basic open sets form a dense subset, we may assume that all conditions in $\CXtau$ are basic open sets. For each of the spaces from \cref{rmk:spaces}, the forcing notion $\CXtau$ is ${<}\kappa$-closed and thus preserves cofinalities ${\leq}\kappa$. We use the following folklore lemma.
	
	\begin{lmm}\label{lmm:collapse forcing lemma}
		If $\CXtau$ collapses $\kappa^+$, i.e., if there is a  $\CXtau$-name $\dot f$ with $\fc_{\CXtau}\ap{\dot f\colon(\kappa^+)^\bf V\to\kappa\text{ is injective}}$, then $\cov(\MXtau)\leq\kappa^+$.	
	\end{lmm}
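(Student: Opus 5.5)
The idea is to turn the name $\dot f$ into $\kappa^+$-many $\kappa$-meagre sets whose union is all of $X$. For every $\xi<\kappa^+$ and $\alpha<\kappa$ let
\[
D_{\xi,\alpha}=\Cup\set{U\in\CXtau}{U\fc\ap{\dot f(\check\xi)=\check\alpha}},
\]
an open subset of $X$, and let $N_{\xi,\alpha}=D_{\xi,\alpha}\setminus\mathrm{int}(\overline{D_{\xi,\alpha}})$. Since $D_{\xi,\alpha}$ is open, it is contained in the interior of its closure, so $N_{\xi,\alpha}$ is empty. That choice is wrong, and I will use the complementary sets instead.

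For every $\xi$, the family $\bigcup_{\alpha<\kappa}D_{\xi,\alpha}$ is open dense, because $\dot f(\xi)$ is forced to be defined. For every $\alpha$, the sets $D_{\xi,\alpha}$ and $D_{\eta,\alpha}$ with $\xi\neq\eta$ are disjoint. Indeed, a point in both lies in basic open sets $U,V$, and $U\cap V$ is then a nonempty open set (it contains the point) that extends both conditions and forces $\dot f(\xi)=\dot f(\eta)=\alpha$, contradicting injectivity. Here I use that a nonempty open set is a condition, so compatibility of conditions is just nonempty intersection.

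Fix $\xi<\kappa^+$ and put $M_\xi=X\setminus\bigcup_{\alpha<\kappa}D_{\xi,\alpha}$. This set is closed with dense open complement, hence nowhere dense, so $M_\xi\in\MXtau$. Next, for each $\eta<\kappa^+$ let
\[
M'_\eta=\bigcup_{\alpha<\kappa}\Bigl(D_{\eta,\alpha}\setminus\bigcup_{\xi<\eta}\ldots\Bigr).
\]
This version is awkward; it is easier to cover every point $x$ directly. Suppose $x\notin M_\xi$ for all $\xi$. Then for each $\xi$ there is $\alpha_\xi<\kappa$ with $x\in D_{\xi,\alpha_\xi}$, and disjointness makes the map $\xi\mapsto\alpha_\xi$ injective from $\kappa^+$ into $\kappa$, which is impossible. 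So $X=\bigcup_{\xi<\kappa^+}M_\xi$ is a union of $\kappa^+$ nowhere dense sets, and therefore $\cov(\MXtau)\leq\kappa^+$.

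The argument has two pivotal steps. The first is density of $\bigcup_\alpha D_{\xi,\alpha}$, which follows because every condition has an extension deciding $\dot f(\xi)$. The second is the disjointness step, which relies on conditions being open sets so that compatibility means nonempty intersection. Neither step needs ${<}\kappa$-closure. In fact the argument gives the stronger conclusion that $X$ is covered by $\kappa^+$ nowhere dense sets.
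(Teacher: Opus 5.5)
Your proof is correct and is essentially the paper's argument: the open sets $D_{\xi,\alpha}$ are pairwise disjoint in $\xi$ for fixed $\alpha$ by injectivity of $\dot f$, $\bigcup_{\alpha<\kappa}D_{\xi,\alpha}$ is open dense, and the pigeonhole principle shows that the closed nowhere dense sets $M_\xi$, $\xi<\kappa^+$, cover $X$. You should simply delete the false start with $N_{\xi,\alpha}$ and the abandoned $M'_\eta$ construction from the write-up, since the final argument does not use them.
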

	\begin{proof}
		Consider the sets  $D_{\alpha,\beta}=\set[b]{p\in\CXtau}{ p\fc\ap{\dot f(\alpha)=\beta}}$ for each $\alpha\in\kappa^+$ and $\beta\in\kappa$, and let $D_\alpha=\Cup_{\beta\in\kappa}D_{\alpha,\beta}$. The set $D_\alpha$ is an open dense subset of $\CXtau$ in the forcing sense, and thus $\Cup D_\alpha$ is open dense as a subset of $X$ in the topological sense. If $\alpha,\alpha'\in\kappa^+$ are distinct and $\beta\in\kappa$, then $\Cup D_{\alpha,\beta}\cap \Cup D_{\alpha',\beta}$ is empty (as otherwise it would force $\dot f$ to not be injective). Therefore by the pigeonhole principle $\Cap_{\alpha\in\kappa^+}\Cup D_\alpha=\emp$.
		Finally, note that $X\setminus\Cup D_\alpha$ is nowhere dense, and therefore $\Cup_{\alpha\in\kappa^+} (X\setminus \Cup D_\alpha)=X$.
	\end{proof}
	
	\begin{lmm}
		Let $(X,\tau)$ be a space considered in \cref{rmk:spaces}. If $(X,\tau)\neq(\mucs,\kappa)$ and $(X,\tau)\neq(\muka,\kappa)$, then $\CXtau$ collapses $\kappa^+$.
	\end{lmm}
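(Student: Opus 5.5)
The spaces to handle are $(\kamu,\rm{bd})=(\kamu,\kappa)$, $(\mumu,\rm{bd})$, $(\mumu,\kappa)$, $(\mucs,\mu)$ and $(\mumu,\mu)$. By \cref{thm:homeo between mucs mu and muka mu} and the bounded-topology homeomorphism results, this list covers every space in \cref{rmk:spaces} except $(\mucs,\kappa)$ and $(\muka,\kappa)$. Since a homeomorphism induces an isomorphism of the forcings $\CXtau$, it suffices to treat one representative of each class.

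The uniform strategy is to find, inside $\CXtau$, a name for a surjection from $\kappa$ onto a set of size at least $\mu$ (or onto $\mu^\kappa\ge\mu^+$). Because $\mu>\kappa$ is singular, a surjection from $\kappa$ onto $\mu$ already collapses $\mu$, and with it $\kappa^+$, to $\kappa$. Concretely, I will produce a name $\dot g\colon\kappa\to\mu$ with $\fc\ap{\ran(\dot g)\supseteq\mu^{\bf V}}$. Composing a surjection $\kappa\to\mu\supseteq(\kappa^+)^{\bf V}$ with a choice of preimages then gives the injection $\dot f$ required by \cref{lmm:collapse forcing lemma}.

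\textbf{Bounded topology on $\kamu$ and $\mumu$.} Let $\abset{\mu_\alpha}{\alpha<\kappa}$ be cofinal in $\mu$. For $\kamu$, let $\dot g$ be the generic function itself, restricted to $\kappa$. For any condition $[s]$ with $s\in\fkamu$ and any $\gamma<\mu$, extending $s$ by one coordinate with value $\gamma$ is possible, so by density every $\gamma\in\mu$ lies in the range. For $(\mumu,\rm{bd})$ the same argument works with $\dot g(\alpha)$ defined as the generic value at coordinate $\alpha$. Indeed, conditions have bounded domain in $\mu$, so the tail coordinates $\alpha<\kappa$ beyond $\dom(s)\cap\kappa$ remain free, and each $\gamma$ is hit densely.

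\textbf{${<}\kappa$-box on $\mumu$.} Conditions are $[s]$ with $|\dom(s)|<\kappa$. Again let $\dot g(\alpha)$ be the generic value at coordinate $\alpha<\kappa$. Any condition fixes fewer than $\kappa$ coordinates, so some $\alpha<\kappa$ is outside $\dom(s)$, and we may set its value to $\gamma$. Hence the range is all of $\mu$.

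\textbf{${<}\mu$-box on $\mumu$ and $\mucs$.} For $\mumu$, conditions fix fewer than $\mu$ coordinates, so a coordinate in $\mu\setminus\dom(s)$ is always free. Here I will decode from blocks: partition $\mu$ into $\kappa$ many disjoint pieces $I_\alpha$ of size $\mu$. Let $\dot g(\alpha)$ be the generic value at the least coordinate of $I_\alpha\setminus\dom(s)$. This is awkward because it depends on $s$, so instead I use a counting argument. Given $s$ with $|\dom(s)|=\lambda<\mu$, there is some $\alpha$ such that $I_\alpha$ … This is the main obstacle, since the generic is a total function and single coordinates are decided.

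My preferred fix is a different coding. Fix $\lambda_\alpha\nearrow\mu$ and disjoint intervals $J_\alpha\subset\mu$ of order type $\lambda_\alpha$. Define $\dot g(\alpha)$ to be the order type of the generic's zero-set within $J_\alpha$, truncated appropriately, or more simply to be the least $\xi<\lambda_\alpha$ such that the generic takes value $1$ at the $\xi$-th point of $J_\alpha$. Given $s$ with $|\dom(s)|<\mu$, choose $\alpha$ with $\lambda_\alpha>|\dom(s)|$ and $\lambda_\alpha>\gamma$. The trouble is that $s$ may already have fixed points of $J_\alpha$. To avoid this, I use for each $\alpha$ many disjoint intervals $J_{\alpha,i}$ ($i<\mu$) and read $\dot g(\alpha)$ from the least $i$ whose interval is untouched; however, "untouched" is not absolute in the generic. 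The real fix, which I expect to work, is to let $\dot g(\alpha,i)$ be the code read on $J_{\alpha,i}$ for a pair index ranging over $\kappa\times\mu$. For any $s$ and $\gamma$, pick $\alpha$ with $\lambda_\alpha>|\dom(s)|,\gamma$. Some $J_{\alpha,i}$ avoids $\dom(s)$, so we can extend $s$ to make that code equal to any prescribed $\gamma$; we can likewise do this simultaneously for $\kappa$ many $\alpha$'s. This density shows that the generic yields, for every ground-model $x\in\kamu$, a coordinate pair coding $x$, giving a name for a surjection of $\kappa\times\mu$ onto $(\mu^\kappa)^{\bf V}\ge\mu^+$. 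Since $\CXtau$ preserves $\kappa$ and adds this surjection while $\mu^{\bf V}$ has cofinality $\kappa$, we get $|\mu^+|=\mu$ in the extension, and then a further coding step yields $\kappa\to\mu$ surjections. Pinning this down, i.e. collapsing $\mu$ itself to $\kappa$ under the ${<}\mu$-box topology, is the step I expect to need the most care. For $\mucs$ the same argument applies with binary codes on intervals of length $\lambda_\alpha$.
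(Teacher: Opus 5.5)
\textbf{There is a genuine gap in the ${<}\mu$-box cases.} Your treatment of $(\kamu,\mathrm{bd})$ and $(\mumu,\kappa)$ is the standard L\'evy-collapse density argument and is fine. The ${<}\mu$-box cases $(\mumu,\mu)$ and $(\mucs,\mu)$, however, are the only genuinely nontrivial ones, and you leave them unproved, as you acknowledge.

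Even granting your last construction, it gives a surjection from $\kappa\times\mu$, a set of size $\mu$, onto $(\mu^\kappa)^{\mathbf V}$. That only collapses $\mu^+$ to $\mu$. It yields no injection of $(\kappa^+)^{\mathbf V}$ into $\kappa$, and the ``further coding step'' you defer is exactly what the lemma asserts. With the binary ``least $1$'' code it is not even clear you get that surjection: coding an $x\in\kamu$ with range cofinal in $\mu$ fixes $\mu$ many coordinates.

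Your codes keep failing because they read a block from its beginning, so one condition of size ${<}\mu$ can pre-decide all $\kappa$ slots at once. The missing idea is a slot code that no condition of size ${<}\mu_\xi$ can decide, but every such condition can steer. Here is one way:
\begin{itemize}
\item Fix $\mu_\xi$ increasing and cofinal in $\mu$, and disjoint intervals $P_\xi\subset\mu$ of the regular order type $\mu_\xi^+$.
\item Let $\dot d(\xi)$ be the length of the run of $1$'s (for $\mumu$: of nonzero values) that starts right after the first run of $\mu_\xi$ consecutive $0$'s in $P_\xi$.
\item Given $s$ with $\card{\dom(s)}<\mu_\xi$ and $\gamma<\mu_\xi$, regularity gives a point $\eta\notin\dom(s)$ of $P_\xi$ above $\dom(s)\cap P_\xi$.
\item Set every free point of $P_\xi$ up to and including $\eta$ to $1$. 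Then place $\mu_\xi$ zeros, then $\gamma$ ones, then a $0$.
\item A run of $\mu_\xi$ zeros starting at or below $\eta$ would have to avoid $\eta$. It would then lie inside $\dom(s)$, which has fewer than $\mu_\xi$ points.
\item So this extension, which adds at most $\mu_\xi$ points, forces $\dot d(\xi)=\gamma$.
\end{itemize}
Every condition has size ${<}\mu_\xi$ for a tail of $\xi$, so $\dot d\colon\kappa\to\mu$ is forced to be onto. This is the same run-of-zeros device the paper uses in part (3) of \cref{thm:uniformity lower bounds}.

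\textbf{There is also a slip in the $(\mumu,\mathrm{bd})$ case.} Conditions $s\in{}^{<\mu}\mu$ with $\dom(s)\geq\kappa$ are dense, and they decide every coordinate below $\kappa$. Hence the generic restricted to $\kappa$ lies in $\mathbf V$ and cannot map onto $\mu$. The fix is to read the generic at the coordinates $\mu_\alpha$ ($\alpha<\kappa$) instead. Since $\dom(s)$ is bounded in $\mu$, a tail of these coordinates is always free, and your density argument goes through.
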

	\begin{proof}
		These are standard results. See also \cite[Exercise VII.G5]{Kunen80} and \cite[Exercise 15.3]{Jech03}.
	\end{proof}
	
	This tells us that for most of the spaces under consideration, the additivity and covering numbers have the fixed value of $\kappa^+$. We remark that the specific case for $(\mucs,\mu)$ has been first observed by \citet[Lemma~1.3\,(c)]{Landver92}, indeed using \cref{lmm:collapse forcing lemma}.
	
	\begin{crl}\leavevmode
		
		\setlength{\tabcolsep}{2pt}
		\begin{center}
			\begin{tabular}{ccccccccccl}
				$\add(\Mkamuka)$& $=$ &$\add(\Mmumubd)$& $=$ &$\add(\Mmumuka)$& $=$ &$\add(\Mmucsmu)$& $=$ &$\add(\Mmumumu)$& $=$ &\\
				$\cov(\Mkamuka)$& $=$ &$\cov(\Mmumubd)$& $=$ &$\cov(\Mmumuka)$& $=$ &$\cov(\Mmucsmu)$& $=$ &$\cov(\Mmumumu)$& $=$ &$\kappa^+$
			\end{tabular}
		\end{center}
	\end{crl}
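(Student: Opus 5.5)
The corollary follows by chaining the inequalities already established. Take any $(X,\tau)$ among the five spaces $(\kamu,\kappa)$, $(\mumu,\rm{bd})$, $(\mumu,\kappa)$, $(\mucs,\mu)$ and $(\mumu,\mu)$. These are exactly the spaces of \cref{rmk:spaces} other than $(\mucs,\kappa)$ and $(\muka,\kappa)$; recall $(\kamu,\kappa)=(\kamu,\rm{bd})$. We then have
\[
\kappa^+\leq\add(\MXtau)\leq\cov(\MXtau)\leq\kappa^+.
\]
The first inequality is \cref{additivity lower bound}. The middle one is the general fact $\add(\cal I)\leq\cov(\cal I)$ from \cref{dfn:characteristics}, which applies because $\MXtau$ is a proper ideal. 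The last inequality comes from combining the preceding lemma, which says $\CXtau$ collapses $\kappa^+$ for these spaces, with \cref{lmm:collapse forcing lemma}. So all the displayed cardinals equal $\kappa^+$.

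The only step with content is the one the paper cites as standard: that $\CXtau$ collapses $\kappa^+$ in each of these five cases. If I had to justify it, I would use the following argument for each space.

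\emph{Bounded topologies, i.e.\ $(\kamu,\rm{bd})$ and $(\mumu,\rm{bd})$.} Fix a cofinal sequence $\abset{\mu_\xi}{\xi\in\kappa}$ in $\mu$. A generic $g\colon\kappa\to\mu$ is a union of bounded conditions, and density arguments show that $g$ maps $\kappa$ onto $\mu$. Hence $\mu$, and with it $\kappa^+\leq\mu$, is collapsed to $\kappa$. For $\mumu$ one uses the same argument on the initial segment $g\restr\kappa$.

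\emph{${<}\kappa$-box topology on $\mumu$.} Conditions have size ${<}\kappa$. Take the generic restricted to $\kappa$ coordinates. By density, its values meet every $\mu_\xi$-segment, so again $\mu$ is collapsed to $\kappa$.

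\emph{${<}\mu$-box topologies, i.e.\ $(\mucs,\mu)$ and $(\mumu,\mu)$.} This is the case I expect to be the main obstacle, since conditions may have size up to any $\lambda<\mu$ and the forcing is more closed. Here I would follow Landver's argument for $(\mucs,\mu)$. Split $\mu$ into $\kappa$ blocks of increasing sizes $\mu_\xi$. Then use the fact that the forcing adds, for each $\xi$, a generic for $\rm{Fn}_{\mu_\xi^+}(\mu_\xi^+,2)$-like pieces. Coding along the blocks produces a surjection from $\kappa$ onto $\mu^+$-many ground-model objects indexed by $\kappa$-sequences. For $\kappa^+$ specifically, it suffices to show that the forcing adds a new $\kappa$-sequence that codes a cofinal map of $\kappa$ into $(\kappa^+)^{\bf V}$. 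This comes from a density argument exploiting the fact that the forcing is not $\kappa^+$-distributive: the $\kappa$ blocks can be decided one at a time, yet there is no bound.

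Since the lemma is quoted from the literature, the corollary itself is immediate once these inequalities are put together.
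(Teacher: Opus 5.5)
Your chain $\kappa^+\leq\add(\MXtau)\leq\cov(\MXtau)\leq\kappa^+$ is exactly how the paper derives the corollary. It uses \cref{additivity lower bound}, properness of $\MXtau$, and the collapse lemma fed into \cref{lmm:collapse forcing lemma}, so the proof is correct as it stands. Your optional justification of the collapse lemma, however, has errors in several places and should not be relied on.

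\emph{Bounded and ${<}\kappa$-box cases.} For $(\mumu,\rm{bd})$ the initial segment $g\restr\kappa$ does not work. Conditions in $\fmumu$ may have domain $\kappa+1$, so $g\restr\kappa$ is decided by a single condition and lies in $\bf V$. Use $\xi\mapsto g(\mu_\xi)$ instead: any $s\in\fmumu$ has $\dom(s)\leq\mu_\xi$ for some $\xi$, so it can be extended to put any prescribed $\beta<\mu$ at $\mu_\xi$. For $(\mumu,\kappa)$, what you need, and what density gives, is that $g\restr\kappa$ is onto $\mu$; cofinality in $\mu$ alone collapses nothing.

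\emph{${<}\mu$-box cases.} Here the sketch is not an argument, for three reasons.
\begin{enumerate}
\item The restriction of the generic to any fixed set of size $<\mu$ is decided by one condition, so no local generics appear on the blocks.
\item Failure of $\kappa^+$-distributivity is not the mechanism: e.g.\ $\Cmucska$ adds new $\kappa$-sequences yet preserves cardinals when $\kappa^{<\kappa}=\kappa$.
\item A surjection from $\kappa$ onto a ground-model set of size $\mu^+$ is impossible when $\mu$ is a strong limit. The proof of \cref{thm:homeo between mumu mu and muka mu} shows that $\rm{Fn}_\mu(\mu,2)$ then has the $\mu^+$-chain condition.
\end{enumerate}

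The real difficulty is this. Any coding whose $\xi$-th value depends on a fixed small set of coordinates can be decided for all $\xi<\kappa$ at once by a single condition of size $<\mu$.

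A robust coding avoids this. Take disjoint $A_\xi\subset\mu$ with $\card{A_\xi}=\mu_\xi$, and split each into $\mu_\xi$ sub-blocks of size $\kappa$. Let each sub-block code an ordinal below $\kappa^+$: the order type of a well-ordering of a subset of $\kappa$, and $0$ otherwise. Set $h(\xi)=\beta$ if all but fewer than $\mu_\xi$ sub-blocks of $A_\xi$ code $\beta$, and $h(\xi)=0$ if no such $\beta$ exists.

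Now let $p\in\rm{Fn}_\mu(\mu,2)$ and $\beta<\kappa^+$ be given, and pick $\xi$ with $\card{p}<\mu_\xi$. Then $p$ meets fewer than $\mu_\xi$ sub-blocks of $A_\xi$. Coding $\beta$ on all the others yields an extension of size $<\mu$ forcing $h(\xi)=\beta$. So $h$ maps $\kappa$ onto $(\kappa^+)^{\bf V}$, and the same argument works for $\rm{Fn}_\mu(\mu,\mu)$.
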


	The remaining two spaces $(\mucs,\kappa)$ and $(\muka,\kappa)$ behave quite differently. For instance, the forcing notions $\Cmucska$ and $\Cmukaka$ are both equivalent to the $\mu$-length ${<}\kappa$-support product of $\kappa$-Cohen forcing $\bb C_\kappa:=\Ckacska$. Therefore, these forcing notions preserve cardinals and thus we cannot apply \cref{lmm:collapse forcing lemma}.
	
	Indeed, it is known that $\cov(\Mmucska)$ is consistently strictly larger than $\kappa^+$.  
	The following theorem is also due to \citet[Theorem 1.7]{Landver92}, with a partial case ($\kappa = \omega$) having been proved earlier by \citet{Miller82}.
	
	\begin{thm}[{\citet{Landver92,Miller82}}]\label{thm:LandverMiller}
		Assume that $\bf V\md\ap{\sf{GCH}}$ and let $G$ be a \mbox{$\Cmucska$-generic} filter over $\bf V$. Then $\bf V[G]\md\ap{\cov(\Mmucska) = \mu < \mu^+ = \cov(\Mkacska)}$.
	\end{thm}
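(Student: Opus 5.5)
The statement has two halves: the value of $\cov(\Mkacska)$ and the value of $\cov(\Mmucska)$ in $\bf V[G]$. I would do the cardinal arithmetic first, then the two lower bounds by a single genericity argument, and leave the upper bound $\cov(\Mmucska)\leq\mu$ for last, since I do not yet have a working construction for it.

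\emph{Cardinal arithmetic.} Write $\Cmucska$ as the ${<}\kappa$-support product $\prod_{\alpha<\mu}\bb C_\kappa$ and $G=\abset{g_\alpha}{\alpha<\mu}$ with $g_\alpha\in\kacs$. Under $\sf{GCH}$ we have $2^{<\kappa}=\kappa$, so the $\Delta$-system lemma (as in \cref{thm:homeo mumu ka and muka ka}) gives the $\kappa^+$-c.c. The forcing is also ${<}\kappa$-closed, so all cardinals are preserved. Counting nice names, $\bf V[G]\md 2^\kappa=(\mu^\kappa)^{\bf V}=\mu^+$ and $2^{<\kappa}=\kappa$. In particular $\cov(\Mkacska)\leq 2^\kappa=\mu^+$, since singletons are nowhere dense. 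Moreover, every $\kappa$-meagre set, in either space, is contained in one coded by $\kappa$ many open dense sets. Each of these contains a maximal antichain of cones, which has size $\leq\kappa$ by the chain condition. So each such meagre set has a code involving $\kappa$ basic cones, and by the $\kappa^+$-c.c.\ it lies in $\bf V[G\restr S]$ for some $S\subset\mu$ with $\card S\leq\kappa$.

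\emph{Lower bounds.} I would use one mixing argument twice. For $\cov(\Mkacska)\geq\mu^+$: take $\mu$ many $\kappa$-meagre sets, and split them into $\kappa$ subfamilies $\cal F_i$ of size $\mu_i<\mu$, where $\abset{\mu_i}{i<\kappa}$ is increasing and cofinal in $\mu$. Then $\cal F_i$ is coded in $\bf V[G\restr S_i]$ with $\card{S_i}\leq\mu_i\cdot\kappa<\mu$. Choose distinct $\beta_j\notin\Cup_{i\leq j}S_i$ and set $x(j)=g_{\beta_j}(0)$. For fixed $i$, the tail $\abset{x(j)}{j\geq i}$ is $\bb C_\kappa$-generic over $\bf V[G\restr S_i][x\restr i]$, because it is read off from distinct coordinates outside $S_i$ and the ${<}\kappa$-support product of these single bits is again $\kappa$-Cohen. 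Since $x\restr i$ has size ${<}\kappa$, a $\kappa$-meagre set coded in $\bf V[G\restr S_i]$ pulls back along the map $y\mapsto (x\restr i)\append y$ to a meagre set coded in $\bf V[G\restr S_i][x\restr i]$. Hence $x$ avoids every set in $\cal F_i$, for every $i$.

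For $\cov(\Mmucska)\geq\mu$ the argument is easier. Fewer than $\mu$ meagre sets, say $\lambda<\mu$ many, are coded in $\bf V[G\restr S]$ with $\card S\leq\lambda\cdot\kappa<\mu$, and their codes only mention coordinates in some $B\subset\mu$ with $\card B<\mu$. Pick an injection $h\colon B\to\mu\setminus S$ and let $x$ be defined by $x(b)=g_{h(b)}(0)$ on $B$, arbitrary elsewhere. Then $x\restr B$ is generic for $({}^B2,\kappa)$ over $\bf V[G\restr S]$, so $x$ avoids all $\lambda$ sets.

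\emph{Upper bound $\cov(\Mmucska)\leq\mu$.} This is the main obstacle, since $\mu<2^\kappa$ forbids covering by singletons or by any family indexed by $\kacs$. My first attempt would use the closed nowhere dense sets
\[
N_{\vec s,j}=\set{x}{x\not\supseteq s_i\text{ for all }i\geq j},
\]
where $\vec s=\abset{s_i}{i<\kappa}$ consists of conditions in $\rm{Fn}_\kappa(\mu,2)$ with pairwise disjoint domains. Each such set is nowhere dense by a one-step extension argument, so $M_{\vec s}=\Cup_{j<\kappa}N_{\vec s,j}$ is $\kappa$-meagre. The goal would then be to find, in $\bf V[G]$, a family of $\mu$ many sequences $\vec s$ such that every $x\in\mucs$ fails to extend cofinally many $s_i$ for some $\vec s$ in the family. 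I would look for such sequences inside the intermediate models $\bf V[G\restr\mu_i]$, using that each of these has only $\mu_i^+\leq\mu$ many relevant codes, and diagonalise over $i<\kappa$ using the fact that $\cf(\mu)=\kappa$ together with the sizes $\mu_i$.

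I expect this diagonal choice to be the delicate point. The generic $x=G$ itself must be caught by some code, and every bounded piece of $G$ is generic over the smaller models. If this approach fails, my fallback is to reconstruct Landver's argument \cite{Landver92}, which I would expect to exploit exactly this singular-cofinality structure.
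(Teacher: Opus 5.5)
Your cardinal arithmetic, the mixing argument for $\cov(\Mkacska)=\mu^+$, and the lower bound $\cov(\Mmucska)\geq\mu$ are fine. The paper itself gives no proof here; it only cites Landver and Miller. However, the upper bound $\cov(\Mmucska)\leq\mu$, which is the heart of the theorem, is not proved, so the proposal has a genuine gap. Your sketch for it also aims at the wrong difficulty. No diagonal choice of sequences $\vec s$ is needed, and catching the generic point is not a problem. The covering sets should be coded in the intermediate models $M_i=\mathbf V[G\restr\mu_i]$. A point is caught because $\kappa$ many of its coordinates are already known in a single $M_i$; the generic point itself is caught by a code in $M_0$.

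The missing step is a decision-plus-pigeonhole argument. Fix $\langle\mu_i : i<\kappa\rangle$ increasing and cofinal in $\mu$ with $\kappa^+<\mu_0$. Let $x\in\mucs$ in $\mathbf V[G]$ have name $\dot x$, and for each $\eta<\kappa^+$ choose $p_\eta\in G$ deciding $\dot x(\eta)$. The support of $p_\eta$ has size $<\kappa=\cf(\mu)$, so it is contained in some $\mu_{i_\eta}$. Since $\kappa^+$ is regular, $i_\eta=i$ for $\kappa^+$ many $\eta$. In $M_i$, let $z$ send $\eta<\kappa^+$ to the value of $\dot x(\eta)$ forced by a condition in $G\restr\mu_i$. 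This $z$ is definable in $M_i$, satisfies $z\subseteq x$, and has $|\dom(z)|\geq\kappa$. Hence $x\in[z_0]$ for some $z_0\in M_i$ with $\dom(z_0)\in[\kappa^+]^{\kappa}$. Each such cone is closed nowhere dense: extend any $s\in\mathrm{Fn}_\kappa(\mu,2)$ by the flipped value of $z_0$ at a point of $\dom(z_0)\setminus\dom(s)$. By $\mathsf{GCH}$ in $\mathbf V$, $M_i$ contains at most $\mu_i^\kappa\leq\mu_i^+<\mu$ such $z_0$. Altogether, $\mu$ nowhere dense sets cover $\mucs$.

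Your sketch lacks two ingredients. First, restricting to coordinates below $\kappa^+$ keeps the count below $\mu$ at each stage; over all of $\mu$, $M_i$ would contain $\mu^+$ such partial functions. Second, the pigeonhole needs more than $\cf(\mu)$ many coordinates. That is exactly why nothing similar works for $\kacs$, in line with your lower bound $\cov(\Mkacska)\geq\mu^+$. Incidentally, $[z_0]$ is your set $N_{\vec s,0}$ with each $s_\xi$ a single flipped value of $z_0$, so your framework can host this argument; the step above is what it is missing.
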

	
	This result invites the following general question, to which we do not know the answer.
	
	\begin{qst}
		For which $\lambda,\nu$ such that $\kappa^+\leq \lambda\leq\nu$ is it consistent that \[\cov(\Mmucska)=\lambda<\nu=\cov(\Mkacska)\,?\] 
	\end{qst}
	
	To conclude this section, we will show that the cardinal characteristics of the spaces $(\mucs,\kappa)$ and $(\muka,\kappa)$ coincide. Thus, it follows that also $\cov(\Mmukaka)=\mu$ is consistent.
	
	\begin{thm}\label{thm:mucs muka same}
		\begin{align*}
			\cov(\Mmucska)&=\cov(\Mmukaka), & \add(\Mmucska)&=\add(\Mmukaka),\\
			\non(\Mmucska)&=\non(\Mmukaka),& 
			\cof(\Mmucska)&=\cof(\Mmukaka).
		\end{align*}
	\end{thm}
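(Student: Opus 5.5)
The plan is to avoid any homeomorphism, since by \cref{thm:weakly compact homeomorphism} the spaces need not be homeomorphic when $\kappa$ is weakly compact. Instead we use a continuous open map that is ``category-preserving''. Fix a bijection $\pi\colon\kappa\to\kappa\times\kappa$; we may arrange $\pi(\beta)=(\alpha,\gamma)$ with $\alpha,\gamma\le\beta$, though this is not essential. Identify $\muka$ with ${}^{\mu\times\kappa}\kappa$ and $\mucs$ with ${}^{\mu\times\kappa}2$, via a bijection $\mu\cong\mu\times\kappa$ (note $\mu\cdot\kappa=\mu$). Now define $F\colon\mucs\to\muka$ coordinatewise: for each $\xi<\mu$, the block $x\restr(\{\xi\}\times\kappa)\in\kacs$ is sent to the value $F(x)(\xi)=$ the least $\gamma$ such that $x(\xi,\gamma)=1$, and we set $F(x)(\xi)=0$ if no such $\gamma$ exists. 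Rather than this crude coding, it is cleaner to use a map $\phi\colon\kacs\to\kaka$ that is continuous, open and surjective, with nowhere dense preimages of nowhere dense sets, on a comeagre subset of $\kacs$ in the ${<}\kappa$-box topology. The natural choice is $\phi(x)=$ the sequence of lengths of the consecutive blocks of zeros between ones. This is a homeomorphism from the comeagre $G_\delta$-like set $D=\{x : x \text{ has cofinally many ones and the ones occur at a club-avoiding pattern}\}$, more simply the set of $x$ with unboundedly many ones, onto $\kaka$ restricted to a dense subset.

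The key steps are as follows.

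\textbf{Step 1.} Find dense sets $D\subset\mucs$ and $E\subset\muka$ that are intersections of $\kappa$ many open dense sets, i.e.\ $\kappa$-comeagre, together with a homeomorphism $h\colon D\to E$ with respect to the subspace ${<}\kappa$-box topologies. Take $D$ to be the set of $x$ such that, for each $\xi<\mu$, the block $x\restr(\{\xi\}\times\kappa)$ has unboundedly many ones and never has ones at limit positions that are limits of ones. Some such normalisation is needed so that the block decomposition is continuous at limit stages. Let $h$ apply the block-length map coordinatewise.

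\textbf{Step 2.} Verify that $D$ is $\kappa$-comeagre. Here the complement has to be expressed as a union of $\kappa$ many nowhere dense sets. The difficulty is that there are $\mu$ coordinates, so the union over $\xi$ is too large. To fix this, I instead use the sets $N_{\beta}=\{x : \exists \xi\ (\text{the ones of } x \text{ in block } \xi \text{ are bounded by } \beta)\}$. Each $N_\beta$ is nowhere dense, since a basic cone mentions fewer than $\kappa$ blocks and can be extended to avoid $N_\beta$. The limit-position condition also needs a $\kappa$-indexed family of nowhere dense sets. This is the main obstacle: it must be arranged that each nowhere dense piece constrains all $\mu$ coordinates simultaneously while still being nowhere dense. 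If the limit condition causes trouble, I would replace the block-length coding by a tree-coding $\phi$ that is a homeomorphism between $\kaka$ and the dense subspace of $\kacs$ of sequences with cofinally many ones (standard for the ${<}\kappa$-box topology), which needs no limit condition.

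\textbf{Step 3.} General transfer. Suppose $h\colon D\to E$ is a homeomorphism between dense subspaces and that $D$ and $E$ are $\kappa$-comeagre. Then for $A\subset D$, $A$ is nowhere dense in $\mucs$ iff it is nowhere dense in $D$ iff $h[A]$ is nowhere dense in $E$ iff $h[A]$ is nowhere dense in $\muka$. Hence $h$ induces an isomorphism between $\Mmucska\restr D$ and $\Mmukaka\restr E$. Since the complements of $D$ and $E$ are meagre, each cardinal characteristic is computed equally on $D$ as on the whole space. For $\add$, $\non$ and $\cof$ this is immediate. For $\cov$, one adds a single meagre set, the complement, to the cover, which is harmless because $\cov\ge\kappa^+$ by \cref{additivity lower bound}. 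This yields all four equalities.
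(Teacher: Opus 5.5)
\textbf{The proposal has a genuine gap at Step 2, and it cannot be repaired within your strategy of blockwise decoding on a $\kappa$-comeagre set.}

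Your sets $N_\beta$ are not nowhere dense; they are dense. A cone $[s]$ with $|\dom(s)|<\kappa$ touches fewer than $\kappa$ of the $\mu$ blocks. So it contains points that are identically $0$ on some untouched block, and these lie in $N_0\subseteq N_\beta$.

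The obstruction is general. Every $\kappa$-comeagre $G\supseteq\bigcap_{\alpha<\kappa}U_\alpha$, with each $U_\alpha$ open dense, contains a cone $[s]$ with $|\dom(s)|\le\kappa$. To see this, choose $s_0\subseteq s_1\subseteq\cdots$ in $\mathrm{Fn}_\kappa(\mu,2)$ with $[s_{\alpha+1}]\subseteq U_\alpha$, take unions at limits (using regularity of $\kappa$), and set $s=\bigcup_{\alpha<\kappa}s_\alpha$. Since $\dom(s)$ meets at most $\kappa<\mu$ blocks, $G$ contains points with an all-zero block.

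Consequences:
\begin{enumerate}
\item No set defined by a requirement on every block is $\kappa$-comeagre. This covers unboundedly many ones, any normalisation at limits, or anything else. Your $D$ is an intersection of $\mu$ open dense sets, not $\kappa$ of them.
\item A blockwise $h$ as in Step 1 therefore has no $\kappa$-comeagre domain.
\item Switching to the tree coding does not help. The one-block fact is true (the $x\in\kacs$ with unboundedly many ones form a $\kappa$-comeagre copy of $\kaka$), but comeagreness does not survive the $\mu$-fold product.
\end{enumerate}
Step 3 is a correct transfer principle, but it never receives its input.

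\textbf{The missing idea is to transfer category without any homeomorphism on a comeagre set.} Blocks with boundedly many ones should be handled by decoding as far as possible and padding. For instance, let $\Phi\colon\mucs\to\muka$ decode each block with the paper's $\decode$ and pad with zeros to length $\kappa$. Then:
\begin{enumerate}
\item $\Phi\circ\Code=\mathrm{id}$, so $\Phi$ is surjective.
\item Preimages of nowhere dense sets are nowhere dense. This holds because the cones $[\Code(u)]$ with $u$ intact are dense in $\mucs$, and $\Phi$ maps $[\Code(u)]$ into $[u]$.
\item Images of nowhere dense sets are nowhere dense. Suppose $u$ is intact and each nonempty block of $u$ has successor length and ends in a nonzero value. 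Then $\Phi(x)\in[u]$ forces $x\in[\Code(u)]$, even on blocks where $x$ has boundedly many ones. Such $u$ with $[\Code(u)]$ disjoint from a given closed nowhere dense set are dense.
\end{enumerate}
A surjection with these two properties makes $\Mmucska$ and $\Mmukaka$ Tukey equivalent, which gives all four equalities.

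This is in substance the paper's proof. \cref{density} transfers open dense sets through intact conditions via $\Code$ and $\decode$, and the equalities are derived as Galois--Tukey connections. At no point does the paper use a comeagre set on which the two spaces are homeomorphic.
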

	
	To prove the theorem, we describe how to \emph{code} elements of $\muka$ as elements of $\mucs$ so that the coding interacts nicely with the notion of category. We first describe the coding.
	
	\begin{dfn}\leavevmode
		\begin{enumerate}
			\item For $\xi\in\kappa$, let us define $\iota_\xi\colon \xi+1\to2$ by $\iota_\xi(\eta)=0$ if $\eta<\xi$ and $\iota_\xi(\xi)=1$.
			\item If $s\in{}^\gamma\kappa$ for some $\gamma\leq\kappa$, we define $\code(s)\in{}^{\gamma'}2$ as 
			\[\code(s):=\iota_{s(0)}\append \iota_{s(1)}\append\dots\append\iota_{s(\xi)}\append\dots,\]
			which is the concatenation of the sequence $\abset{\iota_{s(\xi)}}{\xi\in\gamma}$, and thus $\gamma'=\sum_{\xi\in\gamma}(s(\xi)+1)$.
			\item For $t\in{}^{\leq\kappa}2$, let us define the function $\decode(t) \in {}^{\leq\kappa}\kappa$ by the $\subseteq$-maximum element $s$ of ${}^{\leq\kappa}\kappa$ such that $\code(s) \subseteq t$. 
			\item For $t\in{}^{\leq\kappa}2$, let us define $\dcset(t)$, which is a set of extensions of $\decode(t)$ associated to $t$. If $\code(\decode(t))=t$, we define $\dcset(t)=\ac{s_t}$, and otherwise we define
			\[\dcset(t)=\set[big]{s_t\cup\ac{(\dom(s_t),\zeta)}}{\ot{\big(\code(\decode(t)) \setminus t\big)} \leq\zeta<\kappa}.\] 
			\item Let a partial function $s\colon \mu\pto\kappa$ (including $s\colon \mu\pto 2$) be called \emph{intact} if for every $\alpha\in\mu$, $\dom(s)\cap [\kappa\alpha,\kappa\alpha+\kappa)$ is an initial segment of the interval $[\kappa\alpha,\kappa\alpha+\kappa)$. 
		\end{enumerate}
	\end{dfn}
	
	The key point is that when $s\colon \mu\pto \kappa$ is intact, we obtain a code $\Code(s)$ by combining codes of $s\restr[\kappa\alpha, \kappa\alpha + \kappa)$ for each $\alpha$. More precisely, let $\pi_\alpha\colon \kappa\to[\kappa\alpha,\kappa\alpha+\kappa)$ be the order-preserving bijection and $s\colon \mu\pto \kappa$ and $t\colon \mu\pto 2$ be intact, then we define $\Code(s)\colon \mu\pto 2$ and $\Decode(s)\colon \mu\pto \kappa$ by 
	\begin{align*}
		\Code(s)=\Cup_{\alpha\in\mu}(\code(s\circ \pi_\alpha)\circ \pi^{-1}_\alpha),&&\Decode(s)=\Cup_{\alpha\in\mu}(\code(t\circ \pi_\alpha)\circ \pi^{-1}_\alpha).
	\end{align*}
	We define the auxiliary function $\Set(t)$ to be the set of partial functions from $\mu$ to $\kappa$ defined by
	\[\Set(t)=\set{\Cup_{\alpha\in\mu}(f(\alpha)\circ \pi^{-1}_\alpha)}{f\in\prod_{\alpha\in\mu}\dcset(t\circ \pi_\alpha)}.\] 
	Thus we can check the following fact, the proof of which we leave to the reader.
	
	\begin{fct}\label{density}\leavevmode
		\begin{enumerate}
			\item\label{density decode} If $\cal D\subset\rm{Fn}_\kappa(\mu,2)$ is such that $D:=\Cup_{t\in \cal D}[t]$ is open dense in $(\mucs,\kappa)$, then
			\[D':=\ts\Cup\set[b]{[s]}{s\in\Set(t)\text{ for some intact }t\in\rm{Fn}_\kappa(\mu,2)\text{ with }[t]\subset D}\]
			is open dense in $(\muka,\kappa)$.
			\item\label{density code} If $\cal D\subset\rm{Fn}_\kappa(\mu,\kappa)$ is such that $D:=\Cup_{s\in \cal D}[s]$ is open dense in $(\muka,\kappa)$, then 
			\[D':=\ts\Cup\set[i]{[\Code(s)]}{s\in\rm{Fn}_\kappa(\mu,\kappa)\text{ is intact and }[s]\subset D}\]
			is open dense in $(\mucs,\kappa)$.\qed
		\end{enumerate}
	\end{fct}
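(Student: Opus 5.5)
Both parts follow the same pattern. Each $D'$ is a union of basic open sets, so it is open and only density needs an argument. We prove density by moving between intact partial functions: first enlarge a given basic condition to an intact one, then translate it with $\Code$ or with an element of $\Set$, apply density of $D$ in the other space, and translate back. Two auxiliary observations come first.

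\emph{(a) Intact extensions.} Every $u\in\rm{Fn}_\kappa(\mu,\kappa)$ (or $\rm{Fn}_\kappa(\mu,2)$) extends to an intact $u'$ in the same family. Only ${<}\kappa$ many intervals $[\kappa\alpha,\kappa\alpha+\kappa)$ meet $\dom(u)$. In each of them, $\dom(u)$ is bounded below $\kappa\alpha+\kappa$ by regularity of $\kappa$. We fill in the initial segment up to that bound with arbitrary values, and the result still has domain of size ${<}\kappa$.

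\emph{(b) Properties of $\code$.} The map $\code$ is $\subset$-monotone. If $s\in{}^{\gamma}\kappa$ with $\gamma<\kappa$, then $\code(s)$ has length $\sum_{\xi<\gamma}(s(\xi)+1)<\kappa$, again by regularity. Finally, for $t\in{}^{<\kappa}2$ we have $\code(\decode(t))\subset t$, and the remainder of $t$ beyond $\code(\decode(t))$ is a block of zeros whose length is the order type appearing in the definition of $\dcset(t)$. Consequently every $s\in\dcset(t)$ extends $\decode(t)$, and it can be chosen with $\code(s)\supseteq t$: append any $\zeta$ at least that order type. From (b) it follows that $\Code$ maps intact members of $\rm{Fn}_\kappa(\mu,\kappa)$ to intact members of $\rm{Fn}_\kappa(\mu,2)$, is monotone, and that $\Set(t)\subset\rm{Fn}_\kappa(\mu,\kappa)$ for intact $t\in\rm{Fn}_\kappa(\mu,2)$.

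\emph{Part \ref{density decode}.} Let $u\in\rm{Fn}_\kappa(\mu,\kappa)$ and extend it to an intact $u'$ by (a). Since $D$ is open dense, there is $t\supseteq\Code(u')$ with $[t]\subset D$, and by (a) we may take $t$ intact. For each $\alpha$, monotonicity gives $\code(u'\circ\pi_\alpha)\subset t\circ\pi_\alpha$. By maximality of $\decode$, it follows that $u'\circ\pi_\alpha\subset\decode(t\circ\pi_\alpha)$, and hence $u'\circ\pi_\alpha$ is contained in every element of $\dcset(t\circ\pi_\alpha)$. So any $s\in\Set(t)$ extends $u'\supseteq u$. Since $[s]\subset D'$, this shows $[u]\cap D'\neq\emp$.

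\emph{Part \ref{density code}.} Let $v\in\rm{Fn}_\kappa(\mu,2)$ and extend it to an intact $v'$. For each of the ${<}\kappa$ many relevant $\alpha$, use (b) to pick $u_\alpha\in\dcset(v'\circ\pi_\alpha)$ with $\code(u_\alpha)\supseteq v'\circ\pi_\alpha$. Put $u=\Cup_\alpha u_\alpha\circ\pi_\alpha^{-1}$; this is intact and $\Code(u)\supseteq v'$. Density of $D$ gives $s\supseteq u$ with $[s]\subset D$, and we may take $s$ intact. Then $\Code(s)\supseteq\Code(u)\supseteq v$ by monotonicity, so $[\Code(s)]\subset[v]\cap D'$.

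The main obstacle is the bookkeeping in (b). One must check that the trailing zeros of $t$ are exactly absorbed by choosing $\zeta$ large enough. One must also check that $\decode$ recovers every intact $u'$ whose code lies below $t$; this is the maximality clause in the definition of $\decode$, and the rest is routine.
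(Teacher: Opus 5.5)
Your proof is correct. The paper leaves this Fact to the reader, and your argument is exactly the routine verification intended: extend to intact conditions, use monotonicity of $\Code$ and the maximality of $\decode$ for part (1), and for part (2) pick elements of $\dcset$ whose codes extend the given pieces.

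Reading the order type in the definition of $\dcset(t)$ as that of $t\setminus\code(\decode(t))$ is the sensible fix for a typo in the paper. Your argument works under either reading, since part (1) only uses that every element of $\dcset(t)$ extends $\decode(t)$, and part (2) only needs one element of $\dcset(t)$ whose code extends $t$.
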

	
	For \cref{thm:mucs muka same}, we will only show $\cov(\Mmukaka) = \cov(\Mmucska)$ and $\cof(\Mmucska) = \cof(\Mmukaka)$. Each proof can be given in the form of a Galois--Tukey connection (cf.\ \cite[\S\,4]{Blass10}), and thus the dual equalities $\non(\Mmukaka)=\non(\Mmucska)$ and $\add(\Mmucska)=\add(\Mmukaka)$ follow as well.
	
	\begin{proof}[Proof of \cref{thm:mucs muka same}]
		We first show that $\cov(\Mmukaka) \leq \cov(\Mmucska)$.
		
		Since $\kappa^+ \leq \cov(\Mmukaka), \cov(\Mmucska)$, it suffices to show that for each $\lambda < \cov(\Mmukaka)$ and family $\mathcal{F}$ of nowhere dense sets in $\mucs$ with $|\mathcal{F}| \leq \lambda$, the family $\mathcal{F}$ does not cover $\mucs$. For each $N \in \mathcal{F}$,
		\[ D_N =\ts\Cup\set{[s]}{s\in\Set(t)\text{ for some intact }t\in\rm{Fn}_\kappa(\mu,2)\text{ with }[t]\cap N = \emptyset},  \]
		is open dense in $\muka$ by \cref{density}\,\eqref{density decode}. Since $|\mathcal{F}| \leq \lambda < \cov(\Mmukaka)$, we see $\{ \muka \setminus  D_N \mid N \in \mathcal{F} \}$ does not cover $\muka$. Take $f \notin \bigcup\{ \muka \setminus  D_N \mid N \in \mathcal{F} \}$ and check that $\Code(f) \notin \bigcup\mathcal{F}$.
		
		The reverse inequality follows in much the same way, swapping the roles of $\Code$ and $\Decode$, and using \cref{density}\,\eqref{density code} instead.
		
		Now let us show that $\cof(\Mmukaka) \leq \cof(\Mmucska)$. It suffices to show that for each $\lambda < \cof(\Mmukaka)$ and family $\mathcal{F}$ of meager sets in $\mucs$ with $|\mathcal{F}| \leq \lambda$, the family $\mathcal{F}$ is not cofinal in $\mucs$. 
		For $X \in \mathcal{F}$, let $\abset{X_\xi}{\xi\in\kappa}$ be a sequence of nowhere dense sets in $\mucs$ such that $X=\Cup_{\xi\in\kappa}X_\xi$. For each $\xi\in\kappa$, we define
		\[D^X_\xi=\ts\Cup\set{[s]}{s\in\Set(t)\text{ for some intact }t\in\rm{Fn}_\kappa(\mu,2)\text{ with }[t]\cap X_\xi = \emptyset}.\]
		Then $X' = \Cup_{\xi\in\kappa} (\muka \setminus D^X_\xi)$ is $\kappa$-meagre in $\muka$ by \cref{density}\,\eqref{density decode}. Since $|\mathcal{F}| \leq \lambda < \cof(\Mmukaka)$, we see that $\{  X' \mid X \in \mathcal{F} \}$ is not cofinal in $\muka$. Thus we can take a $\kappa$-meagre set $Y \subseteq \muka$ such that $Y\nsubseteq X'$ for every $X \in \mathcal{F}$. Let $\abset{Y_\xi}{\xi\in\kappa}$ be a sequence of nowhere dense sets in $\muka$ such that $Y=\Cup_{\xi\in\kappa}Y_\xi$. For each $\xi\in\kappa$, and let us define
		\[D^Y_\xi=\ts\Cup\set{[\Code(s)]}{s\in\rm{Fn}_\kappa(\mu,\kappa)\text{ is intact and }[s]\cap Y_\xi = \emptyset}.\]
		By \cref{density}\,\eqref{density code}, we see that $Y' = \Cup_{\xi\in\kappa} (\muka \setminus D^Y_\xi)$ is $\kappa$-meagre in $\muka$. Finally we note that $Y' \nsubseteq X$ for every $X \in \mathcal{F}$. Therefore, $\mathcal{F}$ is not cofinal in $\mucs$.
		
		We can check the reverse inequality by a similar argument, again swapping the roles of $\Code$ and $\Decode$, and of \cref{density}\,\eqref{density code} and \cref{density}\,\eqref{density decode}.
	\end{proof}

	\section{Uniformity numbers}\label{sec:Uniformity}
	
	In this section we will study the uniformity numbers (cf.\ \cref{dfn:characteristics}) of the $\kappa$-meagre ideals of the spaces from \cref{rmk:spaces}. If $(X,\tau)$ is one of such spaces, then $\MXtau$ is a proper ideal, and thus $X\notin\MXtau$. We therefore find the following rather trivial upper bound.
	
	\begin{thm}\label{thm:uniformity upper bound}
		Let $(X,\tau)$ be a space considered in \cref{rmk:spaces}. Then $\non(\MXtau)\leq \card X$.
	\end{thm}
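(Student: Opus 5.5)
The plan is to take the witness $N:=X$ itself and check that it lies outside $\MXtau$. By \cref{dfn:characteristics}, $\non(\MXtau)$ is the least cardinality of a subset of $X$ not in the ideal, so this gives $\non(\MXtau)\leq\card X$ at once. The only thing to verify is that $\MXtau$ is a proper ideal, that is, $X\notin\MXtau$. The paper records this just before \cref{additivity lower bound}: each space from \cref{rmk:spaces} is not $\kappa$-meagre in itself.

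For completeness I would justify properness by a Baire category argument. Let $\abset{N_\xi}{\xi\in\kappa}$ be nowhere dense sets, and let $U_\xi$ be the open dense complements of their closures. Build a descending sequence of basic cones $\sqb{s_\xi}$, $\xi\in\kappa$, with $\sqb{s_{\xi+1}}\subset U_\xi$. At limit stages $\gamma<\kappa$, take $s_\gamma=\Cup_{\xi<\gamma}s_\xi$. In the bounded topology this union has bounded domain, because $\gamma<\kappa=\cf(\delta)$; in the ${<}\kappa$-box topology the domain has size ${<}\kappa$ by regularity of $\kappa$; in the ${<}\mu$-box topology the domain has size ${<}\mu$, since $\gamma<\kappa<\mu$ and each piece has size ${<}\mu$. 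In every case the union is again a basic cone, so the construction is just ${<}\kappa$-closure of $\CXtau$.

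Finally, extend $\Cup_{\xi\in\kappa}s_\xi$ arbitrarily to a total function $f\in X$. Then $f$ lies in every $U_\xi$, so $f\notin\Cup_\xi N_\xi$. Hence $X\notin\MXtau$, and the bound follows.

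There is no real obstacle here; the only care needed is the limit step, which is the ${<}\kappa$-closure that the paper already asserts for each space.
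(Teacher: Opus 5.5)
Your proposal is correct and is exactly the paper's argument: $X$ itself witnesses the bound because $\MXtau$ is a proper ideal, which the paper records in Section~\ref{sec:Covering} as being easy to see. Your Baire-category verification of properness via ${<}\kappa$-closure of the basic cones fills in that remark correctly. One small precision: in the ${<}\mu$-box limit step, the union has size ${<}\mu$ because $\gamma<\kappa=\cf(\mu)$, not merely because $\gamma<\mu$.
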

	
	A lower bound of $\kappa^+$ follows from $\add(\MXtau)\geq\kappa^+$, but this is far from optimal. Indeed many of the spaces have $\mu^+$ or even $\mu^{<\mu}$ as a lower bound. But this does not apply to all of the spaces of \cref{rmk:spaces}. Similar to the situation with the covering numbers, the two spaces $(\mucs,\kappa)$ and $(\muka,\kappa)$ behave quite differently and the uniformity numbers of their $\kappa$-meagre ideals are consistently as small as $\kappa^+$.
	
	We will first provide a topological tool that will help us compare the uniformity numbers of different spaces to each other.
	
	\begin{lmm}\label{lmm:uniformity cont open}
		If $(X,\tau)$ and $(Y,\sigma)$ are spaces, and $\rho\colon X\to Y$ is a continuous open surjection, then $\non(\MXtau)\geq\non(\MYsigma)$.
	\end{lmm}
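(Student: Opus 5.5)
We will show that whenever $N\subset X$ is not $\kappa$-meagre in $X$, its image $\rho[N]$ is not $\kappa$-meagre in $Y$. Since $\card{\rho[N]}\leq\card N$, this gives the inequality: taking $N$ witnessing $\non(\MXtau)$, the set $\rho[N]$ is a non-$\kappa$-meagre subset of $Y$ of size at most $\non(\MXtau)$. We argue by contraposition. Suppose $\rho[N]\subset\Cup_{\xi\in\kappa}M_\xi$ where each $M_\xi\subset Y$ is nowhere dense. Then
\[
N\subset\rho^{-1}[\rho[N]]\subset\Cup_{\xi\in\kappa}\rho^{-1}[M_\xi].
\]
So it suffices to show that the preimage of a nowhere dense set under a continuous open map is nowhere dense. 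Then $N$ is $\kappa$-meagre, which is a contradiction.

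The key step is this preimage claim. Let $M\subset Y$ be nowhere dense. Without loss of generality $M$ is closed, since $M$ is contained in its closure $\overline M$, which is also nowhere dense, and $\rho^{-1}[M]\subset\rho^{-1}[\overline M]$. By continuity, $\rho^{-1}[\overline M]$ is closed in $X$, so it is enough to show that it has empty interior. Suppose $U\subset\rho^{-1}[\overline M]$ is nonempty and open. Since $\rho$ is open, $\rho[U]$ is a nonempty open subset of $Y$. It is also contained in $\rho[\rho^{-1}[\overline M]]\subset\overline M$. This contradicts that $\overline M$ has empty interior. Hence $\rho^{-1}[\overline M]$ is closed with empty interior, and therefore nowhere dense.

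Surjectivity is not needed for the preimage claim. It is also not needed for the cardinality bound, since we only use $\card{\rho[N]}\leq\card N$. Its role is only to make sure both ideals are considered on spaces where $\rho$ relates them sensibly. I expect no real obstacle. The one point requiring care is to use openness of $\rho$ for the interior argument rather than continuity alone, because continuity alone does not preserve nowhere denseness under preimages.
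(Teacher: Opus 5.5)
Your proof is correct and takes essentially the same approach as the paper: both combine $\card{\rho[A]}\leq\card A$ with the fact that preimages of nowhere dense sets under a continuous open map are nowhere dense. The only differences are cosmetic. You argue contrapositively and verify nowhere density through closure and interior, whereas the paper exhibits $\rho^{-1}[V]\cap U$ as a nonempty open set missing the preimage. You are also right that surjectivity plays no real role in the argument.
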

	\begin{proof}
		Suppose $A\subset X$ and $\card A<\non(\MYsigma)$, then $\rho[A]\in\MYsigma$, and thus there are nowhere dense sets $N_\xi$ for $\xi\in\kappa$ such that $\rho[A]=\Cup_{\xi\in\kappa}N_\xi$. However, $\rho^{-1}[N_\xi]$ is nowhere dense in $(X,\tau)$, since for any nonempty open $U\subset X$, the set $\rho[U]$ is open in $Y$, and thus contains a nonempty open $V\subset\rho[U]$ such that $V\cap N_\xi=\emp$. Then  $\rho^{-1}[V]\cap U$ is a nonempty open that is disjoint from $\rho^{-1}[N_\xi]$. Thus $A\subset\Cup_{\xi\in\kappa}\rho^{-1}[N_\xi]\in\MXtau$.
	\end{proof}

	We are now ready to prove several lower bounds.
	
	\begin{thm}\label{thm:uniformity lower bounds}\leavevmode
		\begin{enumerate}
			\item\label{thm:uniformity lower bounds:1} \begin{enumerate}
				\item $\mu^{<\kappa}\leq \non(\Mkamuka)$,
				\item $2^{<\mu}\leq\non(\Mmucsmu)$,
				\item $\mu^{<\mu}\leq \min\ac{\non(\Mmumubd),\non(\Mmumumu)}$.
			\end{enumerate}
			\item\label{thm:uniformity lower bounds:2} \begin{enumerate}
				\item $\non(\Mkakaka)\leq\non(\Mkamuka)$,
				\item $\non(\Mkamuka)\leq\non(\Mmumubd)$,
				\item $\non(\Mkamuka)\leq\non(\Mmumuka)$, 
				\item $\non(\Mmucsmu)\leq\non(\Mmumumu)$, 
				\item $\non(\Mkakaka)\leq\non(\Mmukaka)$, 
				\item $\non(\Mmukaka)\leq\non(\Mmumuka)$.
			\end{enumerate}
			\item\label{thm:uniformity lower bounds:3} $\cf(\sqb\mu^\kappa)\leq \min\ac{\non(\Mkamuka),\non(\Mmucsmu)}$.
		\end{enumerate}
	\end{thm}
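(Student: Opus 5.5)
We treat the three parts separately. Parts \eqref{thm:uniformity lower bounds:1} and \eqref{thm:uniformity lower bounds:2} use two uniform tools; for part \eqref{thm:uniformity lower bounds:3} we have a concrete argument for $(\kamu,\kappa)$ only, and the case $(\mucs,\mu)$ is left open below.

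For part \eqref{thm:uniformity lower bounds:1}, we first isolate a general criterion. Suppose that every nonempty basic open set $[s]$ contains more than $\card A$ pairwise disjoint nonempty open subsets. Then $A$ is nowhere dense, and in particular $A\in\MXtau$. Indeed, for each $[s]$ at least one of these disjoint subsets misses $A$. The union over all $[s]$ of such a choice is then a dense open set disjoint from $A$.

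We then verify that the criterion applies to every $A$ of size below the stated cardinal.
\begin{itemize}
\item For $(\kamu,\kappa)$, given $[s]$ with $s\in\fkamu$ and $\card A<\mu^{<\kappa}=\sup_{\lambda<\kappa}\mu^\lambda$, choose $\lambda<\kappa$ with $\mu^\lambda>\card A$. The extensions of $s$ by arbitrary values on the next $\lambda$ coordinates give $\mu^\lambda$ disjoint cones inside $[s]$. If $\mu^{<\kappa}=\mu$, it suffices to extend on a single coordinate.
\item For $(\mumu,\mathrm{bd})$, we extend $s$ on an interval of length $\lambda<\mu$ above $\dom(s)$, with $\mu^\lambda>\card A$.
\item For $(\mumu,\mu)$ and $(\mucs,\mu)$, we extend on $\lambda$ fresh coordinates outside $\dom(s)$. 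This produces $\mu^\lambda$, respectively $2^\lambda$, disjoint cones, and $\sup_{\lambda<\mu}2^\lambda=2^{<\mu}$.
\end{itemize}
So every set of size less than the relevant weight-type cardinal is nowhere dense.

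For part \eqref{thm:uniformity lower bounds:2}, each inequality follows from \cref{lmm:uniformity cont open} applied to a suitable continuous open surjection from the larger space onto the smaller one.
\begin{itemize}
\item For (a), fix a surjection $g\colon\mu\to\kappa$ and use $f\mapsto g\circ f$ from $(\kamu,\kappa)$ onto $(\kaka,\kappa)$.
\item For (f), use the same map $f\mapsto g\circ f$, now from $(\mumu,\kappa)$ onto $(\muka,\kappa)$.
\item For (d), use a surjection $\mu\to2$ applied pointwise, from $(\mumu,\mu)$ onto $(\mucs,\mu)$.
\item For (b), (c) and (e), use restriction $f\mapsto f\restr\kappa$: from $(\mumu,\mathrm{bd})$ onto $(\kamu,\mathrm{bd})$, from $(\mumu,\kappa)$ onto $(\kamu,\kappa)$, and from $(\muka,\kappa)$ onto $(\kaka,\kappa)$.
\end{itemize}
In each case two checks are routine. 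First, the image of a basic cone $[s]$ is again a cone: $[g\circ s]$, or $[s\restr\kappa]$. For the bounded topology the latter is basic, because $s\restr\kappa$ has bounded domain. Second, the preimage of a cone is a union of cones of the same type, so the map is continuous.

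For part \eqref{thm:uniformity lower bounds:3}, let $\card A<\cf(\sqb\mu^\kappa)$. The aim is to assign to each $f\in A$ a set $R_f\in[\mu]^{\le\kappa}$ such that the family $\set{R_f}{f\in A}$ cannot be cofinal. Then some $X=\set{x_\xi}{\xi<\kappa}\in[\mu]^\kappa$ satisfies $X\nsubset R_f$ for all $f\in A$.

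For $(\kamu,\kappa)$ we take $R_f=\ran(f)$ and put $N_\xi=\set{f}{x_\xi\notin\ran(f)}$. Each $N_\xi$ is closed, since its complement is a union of cylinders. Each $N_\xi$ is also nowhere dense, since any $s\in\fkamu$ extends by one coordinate with value $x_\xi$. By the choice of $X$, every $f\in A$ lies in some $N_\xi$, so $A\subset\Cup_{\xi<\kappa}N_\xi\in\Mkamuka$.

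The main obstacle is $(\mucs,\mu)$, where there is no natural range set to use. We would first try to pass to the homeomorphic space $(\muka,\mu)$ of \cref{thm:homeo between mucs mu and muka mu}, together with a partition of $\mu$ into blocks $I_\alpha$ of size $\kappa$. The goal is a family of dense open sets $U_x$, for $x<\mu$, such that every point lies in at most $\kappa$ of them; we would then set $R_f=\set{x}{f\in U_x}$ and $N_\xi=\mucs\setminus U_{x_\xi}$. The difficulty is that cones in the ${<}\mu$-box topology may have domains containing up to ${<}\mu$ whole blocks, so density and point-$\kappa$-ness pull against each other. We expect this step to require a genuinely different coding and have not settled how to carry it out.
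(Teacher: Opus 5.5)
Your parts (1), (2)(a), (2)(c)--(f) and the $(\kamu,\mathrm{bd})$ half of (3) are correct and essentially the paper's argument; your $g\circ f$ for a surjection $g$ is the paper's reduction mod $\kappa$ or mod $2$. There are two genuine gaps.

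\textbf{Part (2)(b) fails as written.} The restriction $f\mapsto f\restr\kappa$ is not an open map from $(\mumu,\mathrm{bd})$ to $(\kamu,\mathrm{bd})$. Basic sets of $(\mumu,\mathrm{bd})$ are cones $[s]$ with $s\in{}^\beta\mu$ for arbitrary $\beta<\mu$, in particular for $\beta\ge\kappa$. Then $s\restr\kappa$ is total on $\kappa$, and the image of $[s]$ is the singleton $\{s\restr\kappa\}$. This singleton is not open, because basic sets of $(\kamu,\mathrm{bd})$ are cones of $t\in\fkamu$, whose domain is bounded in $\kappa$, not merely in $\mu$. That is where your justification slips. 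The failure is not a technicality: for $y\in\kamu$ the singleton $\{y\}$ is nowhere dense, yet its preimage is the nonempty basic open set $[y]$ of $(\mumu,\mathrm{bd})$. So the mechanism of \cref{lmm:uniformity cont open} breaks down.

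The paper instead samples along a strictly increasing cofinal sequence, $x\mapsto\langle x(\mu_\xi):\xi<\kappa\rangle$. The image of $[s]$ with $\dom(s)=\beta<\mu$ is then the cone of a function on the proper initial segment $\{\xi:\mu_\xi<\beta\}$ of $\kappa$, which is open. Alternatively, (b) follows at once from your (1)(c) and \cref{thm:uniformity upper bound}: $\non(\Mkamuka)\le\mu^\kappa\le\mu^{<\mu}\le\non(\Mmumubd)$. In (c) and (e) restriction is fine, since a domain of size ${<}\kappa$ meets $\kappa$ in a bounded set.

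\textbf{Part (3) for $(\mucs,\mu)$ is missing}, so $\cf([\mu]^\kappa)\le\non(\Mmucsmu)$ is not established. Your reduction is the right shape. What is missing is a coding in which the point itself determines its $\kappa$ blocks, rather than fixed blocks.

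Fix a strictly increasing cofinal sequence of regular cardinals $\mu_\xi$. For each $\xi$, the set $C_\xi$ of $x$ with no run of $\mu_\xi$ consecutive zeros is nowhere dense, so you may assume $A\cap\bigcup_\xi C_\xi=\emptyset$. For $x\notin\bigcup_\xi C_\xi$, let $\alpha^x_\xi$ be the least $\alpha$ with $x$ constantly $0$ on $[\alpha,\alpha+\mu_\xi)$. Put $z_x=\{\ot(x^{-1}(1)\cap[\alpha^x_\xi,\alpha^x_{\xi+1})):\xi<\kappa\}\in[\mu]^{\le\kappa}$, and take $R_x=z_x$ and $N_\beta=\{x:\beta\notin z_x\}$.

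To see that $N_\beta$ is nowhere dense, let $|\dom(s)|<\mu$ and choose $\xi$ with $|\dom(s)|,\beta<\mu_\xi$. By regularity of $\mu_{\xi+1}$ there is $\eta<\mu_{\xi+1}$ with $\dom(s)\cap\mu_{\xi+1}\subseteq\eta$. Extend $s$ by ones on $\eta\setminus\dom(s)$, followed by the pattern $0^{\mu_\xi}1^\beta0^{\mu_{\xi+1}}$ up to $\mu_{\xi+1}$. All zeros below $\eta$ lie in $\dom(s)$, which has size ${<}\mu_\xi$. Hence any run of $\mu_\xi$ zeros starting below $\eta$ must continue into the new zero block. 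One then checks that for every $x$ in the new cone, $[\alpha^x_\xi,\alpha^x_{\xi+1})$ contains exactly the $\beta$ new ones, so $\beta\in z_x$.

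Now finish exactly as in your $(\kamu,\mathrm{bd})$ case. This also resolves the tension you noticed: a condition of size ${<}\mu_\xi$ cannot prevent a marker at scale $\mu_\xi$, yet each $x$ has only $\kappa$ marker-delimited intervals.
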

	
	\begin{proof}
		\begin{enumerate}[wide,itemsep=.25\baselineskip]
			\item For $(\kamu,\rm{bd})$, note that every basic open contains a family of $\mu^{<\kappa}$-many mutually disjoint open subsets, hence every $N\subset\kamu$ with $\card N<\mu^{<\kappa}$ is nowhere dense. The same reasoning applies to the other inequalities.
			
			\item We prove each of the relations by providing a continuous open surjection and applying \cref{lmm:uniformity cont open}. Specifically:
			\begin{enumerate}[leftmargin=3\parindent]
				\item Define $\rho_0\colon \kamu\to\kaka$ sending $x\mapsto y:= x\mod \kappa$, or to be precise, for each $\alpha\in\kappa$ there is $\beta\in\mu$ with $x(\alpha)=\kappa\beta+y(\alpha)$. 
				\item Fix a strictly increasing cofinal sequence $\abset{\mu_\xi}{\xi\in\kappa}$ in $\mu$, then we define $\rho_1\colon \mumu\to\kamu$ by sending $x\mapsto y$ where $y(\xi)=x(\mu_\xi)$ for each $\xi\in\kappa$.
				\item Define  $\rho_2\colon \mumu\to\kamu$ sending $x\mapsto x\restr \kappa$.
				\item Define $\rho_3\colon \mumu\to\mucs$ sending $x\mapsto y:=x\mod 2$, or to be precise, for each $\alpha\in\mu$ there is $\beta\in\mu$ with $x(\alpha)=2\beta+y(\alpha)$.
				\item Define  $\rho_4\colon \muka\to\kaka$ sending $x\mapsto x\restr \kappa$.
				\item Define $\rho_5\colon \mumu\to\muka$ sending $x\mapsto x\mod\kappa$.
			\end{enumerate}
			
			\item We first show $\cf(\sqb\mu^\kappa)\leq\non(\Mkamuka)$. Suppose that $X\subseteq \kamu$ with $|X| < \cf ([\mu]^\kappa)$ and define $A = \{\ran (f) \mid f \in X \} \subseteq [\mu]^{\leq \kappa} $. Since $|A| < \cf ([\mu]^\kappa)$, there is a $y \in [\mu]^\kappa$ such that $y \nsubseteq a$ for all $a \in A$, in particular, $y \nsubseteq \ran (f)$ for all $f \in X$. Hence, $X \subseteq \bigcup_{\gamma \in y} \{ f \in \kamu \mid \gamma \notin  \ran (f) \}$. Finally note that $\{ f \in \kamu \mid \gamma \notin  \ran (f) \}$ is nowhere dense in $(\kamu,\rm{bd})$.
			
			Now let us show $\cf(\sqb\mu^\kappa)\leq\non(\Mmucsmu)$.
			Let $X\subset\mucs$ with $\card{X}<\cf(\sqb\mu^\kappa)$. In this proof we will require that $\abset{\mu_\xi}{\xi\in\kappa}$ is cofinal and strictly increasing in $\mu$ and furthermore that $\mu_\xi$ is regular for every $\xi\in\kappa$. We first note that we may assume that each $x\in X$ contains sequences of consecutive $0$'s of length $\mu_\xi$ for any $\xi\in\kappa$. Namely, the set \[C_\xi=\set{x\in\mucs}{\forall \alpha\in\mu\,\exists \gamma\in\iv{\alpha}{\alpha+\mu_\xi}\,(x(\gamma)\neq 0)}\] is nowhere dense for each $\xi<\kappa$, thus we may assume without loss that $X\cap \Cup_{\xi\in\kappa}C_\xi=\emp$.  Consequently, for each $x\in X$ we may define a set $A_x\in\sqb\mu^\kappa$ by $\alpha\in A_x$ if and only if there is $\xi\in\kappa$ such that $\alpha$ is the minimal ordinal for which $x(\gamma)=0$ for all $\gamma\in\iv{\alpha}{\alpha+\mu_\xi}$. We let $\abset[b]{\alpha^x_\xi}{\xi\in\kappa}$ increasingly enumerate $A_x$.
			
			We now gather the order-type of the (non-consecutive) sequence of $1$'s that occur in $x$ in each of the intervals $\iv[b]{\alpha_\xi^x}{\alpha_{\xi+1}^x}$ provided by $A_x$. That is to say, we define \[z_x=\set{\ot\big(\iv[b]{\alpha^x_\xi}{\alpha^x_{\xi+1}}\cap x^{-1}(1)\big)}{\xi\in\kappa}.\] Because we assumed $\card{X}<\cf(\sqb\mu^\kappa)$, there is some $y\in \sqb\mu^\kappa$ with $y\nsubset z_x$ for all $x\in X$. Let us define $X_\beta=\set{x\in\mucs}{\beta\notin z_x}$, then we claim that $X_\beta$ is nowhere dense, and thus that $\Cup_{\beta\in y}X_\beta\in\Mmucsmu$. If $x\in X$, then $y\nsubset z_x$ implies that there is some $\beta\in y\setminus z_x$, and thus $x\in X_\beta$, which shows that $X\subset\Cup_{\beta\in y}X_\beta$.
			
			To see that $X_\beta$ is nowhere dense, let $s\colon \mu\pto 2$ be a partial function with $\card{\dom(s)}<\mu$, then there is some $\xi\in\kappa$ such that $\card{\dom(s)},\beta<\mu_\xi$. Note that $\mu_{\xi+1}$ is regular, so $\eta:=\sup(\dom(s)\cap \mu_{\xi+1})<\mu_{\xi+1}$. Now extend $s$ to $s'$ with $\dom(s')=\dom(s)\cup \mu_{\xi+1}$ by adding a consecutive sequence $\ab 0^{\mu_\xi}\append \ab 1^\beta\append 0^{\mu_{\xi+1}}$ to $s$, for instance by
			\begin{align*}
				s'(\alpha)=\begin{cases}
					s(\alpha)&\text{if }\alpha\in\dom(s),\\
					1&\text{if }\alpha\in\eta\setminus\dom(s)\text{ or }\eta+\mu_\xi\leq \alpha<\eta+\mu_\xi+\beta,\\
					0&\text{for all other }\alpha\in\mu_{\xi+1}.\\
				\end{cases}
			\end{align*} 
			It then follows for any $x\in[s']$ we have $\alpha^x_\xi=\eta$ and $\alpha^x_{\xi+1}=\eta+\mu_\xi+\beta$, and thus $\beta\in z_x$. Therefore $[s']\cap X_\beta=\emp$.
			\qedhere
		\end{enumerate}
	\end{proof}
	
	\begin{thm}\label{crl:mulmu Mmucsmu}
		$\mu^{<\mu}\leq\non(\Mmucsmu)=\non(\Mmumumu)$.
	\end{thm}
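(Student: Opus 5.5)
The plan is to combine three earlier results: the inequality $\non(\Mmucsmu)\leq\non(\Mmumumu)$ from \cref{thm:uniformity lower bounds}\,\eqref{thm:uniformity lower bounds:2}, the homeomorphism criterion of \cref{thm:homeo between mumu mu and muka mu}, and the lower bounds $2^{<\mu},\cf(\sqb\mu^\kappa)\leq\non(\Mmucsmu)$. We split into two cases according to whether $\mu$ is a strong limit.

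\emph{Case 1: $\mu$ is not a strong limit.} By \cref{thm:homeo between mumu mu and muka mu}, $(\mucs,\mu)$ and $(\mumu,\mu)$ are homeomorphic. Homeomorphisms preserve the $\kappa$-meagre ideal, so the two uniformity numbers are equal. The bound $\mu^{<\mu}\leq\non(\Mmumumu)$ then comes from \cref{thm:uniformity lower bounds}\,\eqref{thm:uniformity lower bounds:1}.

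\emph{Case 2: $\mu$ is a strong limit.} First the lower bound. Here $2^\kappa<\mu$, so the cardinal arithmetic fact gives $\mu^\kappa=2^\kappa\cdot\cf(\sqb\mu^\kappa)=\cf(\sqb\mu^\kappa)$. Moreover $\mu^{<\mu}=\mu^{\kappa}$ for a singular strong limit $\mu$ of cofinality $\kappa$. Hence $\mu^{<\mu}=\cf(\sqb\mu^\kappa)\leq\non(\Mmucsmu)$ by \cref{thm:uniformity lower bounds}\,\eqref{thm:uniformity lower bounds:3}. Since $\non(\Mmucsmu)\leq\non(\Mmumumu)$ is already known, it remains to prove $\non(\Mmumumu)\leq\non(\Mmucsmu)$ in this case.

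For that inequality we use a coding in the spirit of \cref{thm:mucs muka same}. Partition $\mu$ into $\mu$-many consecutive intervals $I_\alpha$, each of order type $\mu$. For $x\in\mucs$ let $\Decode(x)(\alpha)$ be the order type of the initial block of $0$'s of $x\restr I_\alpha$, taking the value $0$ if $x\restr I_\alpha$ has no $1$. Let $A\subset\mucs$ be non-$\kappa$-meagre with $\card A=\non(\Mmucsmu)$. Define
\[
B=\set{s\cup \Decode(x)\restr(\mu\setminus\dom(s))}{s\in\rm{Fn}_\mu(\mu,\mu),\ x\in A}.
\]
Then $\card B\leq\mu^{<\mu}\cdot\card A=\non(\Mmucsmu)$, and it suffices to show that $B$ is not $\kappa$-meagre.

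Given nowhere dense sets $N_\xi\subset\mumu$ for $\xi\in\kappa$, let $D_\xi$ be the union of the cones $\sqb{\Code(t)}$ such that $\sqb t\cap N_\xi=\emp$, where $\Code(t)$ writes $\iota_{t(\alpha)}$ at the start of $I_\alpha$. The aim is to find one $s$ such that each $D_\xi$ is open dense in the relevant part of $\mucs$ after adjusting by $s$. Then some $x\in A$ lies in all the $D_\xi$, and $s\cup\Decode(x)\notin\Cup_\xi N_\xi$.

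The main obstacle is exactly this density argument. A basic condition $u\in\rm{Fn}_\mu(\mu,2)$ may have $\dom(u)\cap I_\alpha$ cofinal in $I_\alpha$ without containing a $1$, because $\mu$ is singular. Then $u$ cannot be extended to decide the value on $I_\alpha$. However, there are fewer than $\mu$ such bad $\alpha$ for a given $u$. The first approach to try is to let the free parameter $s$ absorb these coordinates: work in the dense set of pairs $(s,u)$ and run a Baire-category argument for the product $(\mumu,\mu)\times(\mucs,\mu)$ restricted to the conditions in which $s$ covers every bad interval of $u$. If that does not work directly, the fallback is to shorten the intervals, letting $I_\alpha$ have length $\mu_\xi$ with $\alpha<\mu_\xi$ regular. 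Then bounded-domain subsets of size $<\mu_\xi$ are automatically bounded, as in the proof of \cref{thm:uniformity lower bounds}\,\eqref{thm:uniformity lower bounds:3}, and the values $\geq\mu_\xi$ on such intervals are handled by $s$ again.
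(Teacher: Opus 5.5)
Your Case~1 is correct, and so is your lower bound $\mu^{<\mu}\leq\non(\Mmucsmu)$ in Case~2. The paper gets that bound without a case split, from $\mu^\kappa=2^\kappa\cdot\cf(\sqb\mu^\kappa)$ and $\mu^{<\mu}=2^{<\mu}\cdot\mu^\kappa$, but your route works too.

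The gap is the inequality $\non(\Mmumumu)\leq\non(\Mmucsmu)$ for strong limit $\mu$. The coding argument is never completed: you flag the density step yourself and offer only an approach ``to try'' and a fallback. As set up, it also has concrete defects.

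\begin{enumerate}
\item A condition $t\in\rm{Fn}_\mu(\mu,\mu)$ may take $\kappa$-many values cofinal in $\mu$. Then $\Code(t)$ has domain of size $\mu$, so $\sqb{\Code(t)}$ is not a basic open set of $(\mucs,\mu)$; it has empty interior. Your $D_\xi$ therefore need not be open, and ``some $x\in A$ lies in all the $D_\xi$'' is unjustified.
\item This is no accident. For strong limit $\mu$ the two spaces have different cellularity (see the proof of \cref{thm:homeo between mumu mu and muka mu}), so you should not expect cones to transfer between them.
\item A single $s$ of size ${<}\mu$ fixed in advance cannot absorb the ``bad'' intervals of every condition $u$, since every $\alpha$ is bad for some $u$.
\item Non-meagreness of $A$ only lets you meet sets that are comeagre in all of $\mucs$, not sets that are dense in some ``relevant part''.
\end{enumerate}

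The missing idea is that no coding is needed. By \cref{thm:uniformity upper bound}, $\non(\Mmumumu)\leq\card{\mumu}=2^\mu$. For a strong limit $\mu$ of cofinality $\kappa$ one has $2^\mu=(2^{<\mu})^\kappa=\mu^\kappa$. You already showed that this equals $\mu^{<\mu}$ and is at most $\non(\Mmucsmu)$. Hence
\[
2^\mu=\mu^\kappa\leq\non(\Mmucsmu)\leq\non(\Mmumumu)\leq 2^\mu,
\]
so all these cardinals coincide. This is exactly how the paper concludes.
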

	\begin{proof}
		We first show the inequality. Because $\mu^\kappa=2^\kappa\cdot\cf(\sqb\mu^\kappa)$ and both $\cf(\sqb\mu^\kappa)$ and $2^{<\mu}$ are lower bounds of $\non(\Mmucsmu)$, it follows $\mu^\kappa$ is a lower bound. Secondly, since $\mu^{<\mu}=2^{<\mu}\cdot\mu^\kappa$ and also $2^{<\mu}$ is a lower bound, we see that $\mu^{<\mu}\leq\non(\Mmucsmu)$.
		
		Now for the equality, this is clearly the case if  $(\mucs,\mu)$ and $(\mumu,\mu)$ are homeomorphic. On the other hand, if $(\mucs,\mu)$ and $(\mumu,\mu)$ are not homeomorphic, then by \cref{thm:homeo between mumu mu and muka mu,thm:homeo between mucs mu and muka mu} we see that $\mu$ is a strong limit. But, if $\mu$ is a strong limit, then \[2^\mu=(2^{<\mu})^\kappa=\mu^\kappa\leq\non(\Mmucsmu)\leq\non(\Mmumumu)\leq 2^\mu.\qedhere\]
	\end{proof}

	\begin{figure}[t]
		\begin{tikzpicture}[xscale=1.5,yscale=1.2]
			
			\node (k+) at (0,0) {$\kappa^+$};
			\node (cfmuka) at (1,1) {$\cf(\sqb\mu^\kappa)$};
			\node (kamubd) at (2,2) {$\non(\Mkamuka)$};

			\node (kakaka) at (2,0) {$\non(\Mkakaka)$};
			\node (mulka) at (0,2) {$\mu^{<\kappa}$};
			
			\node (mukaka) at (5.5,0) {$\non(\Mmukaka)$};	
			\node (2ka) at (3,1) {$2^\kappa$};

			\node (mucsmu) at (3,5) {\llap{$\non(\Mmucsmu)={}$}$\non(\Mmumumu)$};
			
			\node (muka) at (3,3) {$\mu^{\kappa}$};
			\node (mulmu) at (3,4) {$\mu^{<\mu}$};
			
			\node (mumubd) at (4.5,4) {$\non(\Mmumubd)$};
			
			\node (mumuka) at (5.5,2) {$\non(\Mmumuka)$};
			\node (2mu) at (5.5,5) {$2^{\mu}$};

			\draw (k+) edge[->] (kakaka);
			\draw (k+) edge[->] (cfmuka);
			\draw (cfmuka) edge[->] (kamubd);
			\draw (mulmu) edge[->] (mucsmu);
			\draw (k+) edge[->] (mulka);
			\draw (mulka) edge[->] (kamubd);
			\draw (mulmu) edge[->] (mumubd);
			\draw (kakaka) edge[->] (2ka);
			\draw (2ka) edge[->] (muka);
			\draw (kakaka) edge[->] (kamubd);
			\draw (kamubd) edge[->] (mumuka);
			\draw (kakaka) edge[->] (mukaka);
			\draw (kamubd) edge[->] (muka);
			\draw (muka) edge[->] (mulmu);
			\draw (mumuka) edge[->] (2mu);
			\draw (mumubd) edge[->] (2mu);
			\draw (mucsmu) edge[->] (2mu);
			\draw (mukaka) edge[->] (mumuka);
			
		\end{tikzpicture}
		
		\caption{Summary of \cref{thm:uniformity upper bound,thm:uniformity lower bounds,crl:mulmu Mmucsmu}.}
		\label{fig:cardinals diagram}
	\end{figure}
	
	We saw in \cref{thm:mucs muka same} that $\non(\Mmucska)=\non(\Mmukaka)$. Contrary to the other spaces, the uniformity numbers of the $\kappa$-meagre ideals of $(\mucs,\kappa)$ and $(\muka,\kappa)$ can be consistently smaller than $\mu$. For this result, we require the combinatorial concept of \emph{independent families}.
	
	\begin{dfn}
		Let $\lambda$ be a regular infinite cardinal. A family $\cal X\subset[\lambda]^\lambda$ is \emph{${<}\nu$-independent}, if for every $A,B\in[\cal X]^{<\nu}$ with $A\cap B=\emp$ we have $\card{\Cap A\setminus \Cup B}=\lambda$.
	\end{dfn}
	
	\begin{thm}\label{mu < non Mmucska}\leavevmode
		\begin{enumerate}
			\item If $\lambda<2^\lambda<\mu$, then $\lambda<\non(\Mmucska)$. Hence, if $\mu$ is strong limit, then $\mu^+\leq\non(\Mmucska)$.
			\item If $\kappa<\lambda<\mu\leq 2^\lambda$ and there exists a ${<}\kappa^+$-independent family $\cal X\subset[\lambda]^\lambda$ of cardinality $\mu$, then $\non(\Mmucska)\leq\lambda$.
		\end{enumerate}
	\end{thm}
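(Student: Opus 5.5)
For the first claim, I will show something stronger than meagreness: every $X\subset\mucs$ with $\card X\leq\lambda$ is nowhere dense in $(\mucs,\kappa)$. The idea is to read each coordinate as a subset of $X$. For $\alpha\in\mu$ let $A_\alpha=\set{x\in X}{x(\alpha)=1}\subset X$.

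There are at most $2^\lambda$ distinct sets $A_\alpha$, so the coordinates fall into at most $2^\lambda$ classes, where $\alpha\sim\beta$ iff $A_\alpha=A_\beta$. Suppose every class had size ${<}\kappa$. Then $\mu\leq 2^\lambda\cdot\kappa<\mu$, using $\kappa<\mu$ and $2^\lambda<\mu$, which is a contradiction. Hence some class contains a set $S\subset\mu$ with $\card S=\kappa$.

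Every $x\in X$ is constant on $S$, since $x(\alpha)=x(\beta)$ whenever $A_\alpha=A_\beta$. So $X\subset N_S:=\set{x\in\mucs}{x\restr S\text{ is constant}}$. I claim $N_S$ is nowhere dense. Take a basic open $[s]$ with $s\in\rm{Fn}_\kappa(\mu,2)$. Since $\card{\dom(s)}<\kappa=\card S$, there are two distinct $\alpha,\beta\in S\setminus\dom(s)$. Extending $s$ by $\alpha\mapsto0$ and $\beta\mapsto1$ gives a cone disjoint from $N_S$. Thus $X$ is nowhere dense, and so $\lambda<\non(\Mmucska)$.

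For the "hence" clause, let $\mu$ be a strong limit. Every $\lambda<\mu$ then satisfies $\lambda<2^\lambda<\mu$, so every set of size ${<}\mu$ is nowhere dense. A set $X$ of size $\mu$ can be written as $X=\Cup_{\xi\in\kappa}X_\xi$ with $\card{X_\xi}<\mu$, because $\cf(\mu)=\kappa$. Each $X_\xi$ is nowhere dense, so $X$ is $\kappa$-meagre, and therefore $\mu^+\leq\non(\Mmucska)$.

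For the second claim, fix a ${<}\kappa^+$-independent family $\set{X_\alpha}{\alpha\in\mu}\subset[\lambda]^\lambda$, indexed injectively by $\mu$. For $\gamma\in\lambda$ define $y_\gamma\in\mucs$ by $y_\gamma(\alpha)=1$ iff $\gamma\in X_\alpha$, and let $Y=\set{y_\gamma}{\gamma\in\lambda}$, so $\card Y\leq\lambda$. It suffices to show $Y$ is not $\kappa$-meagre. Let $\abset{D_\xi}{\xi\in\kappa}$ be open dense sets; we want some $\gamma$ with $y_\gamma\in\Cap_\xi D_\xi$.

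Recursively build a $\subset$-increasing sequence $\abset{s_\xi}{\xi\in\kappa}$ in $\rm{Fn}_\kappa(\mu,2)$ with $[s_{\xi+1}]\subset D_\xi$, using density at successor steps. At a limit $\delta<\kappa$, take $s_\delta=\Cup_{\xi<\delta}s_\xi$. This is still in $\rm{Fn}_\kappa(\mu,2)$ because $\kappa$ is regular, and this regularity is the only delicate point of the recursion.

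Let $s=\Cup_{\xi\in\kappa}s_\xi$. Then $\card{\dom(s)}\leq\kappa$, and
\[
\set{\gamma\in\lambda}{s\subset y_\gamma}=\Cap_{s(\alpha)=1}X_\alpha\setminus\Cup_{s(\alpha)=0}X_\alpha .
\]
This involves at most $\kappa$ members of the family, so ${<}\kappa^+$-independence makes the set of size $\lambda$, in particular nonempty. Any $\gamma$ in it has $y_\gamma\supset s_{\xi+1}$ for all $\xi$, hence $y_\gamma\in\Cap_{\xi}D_\xi$. So $Y\notin\Mmucska$, and $\non(\Mmucska)\leq\lambda$.
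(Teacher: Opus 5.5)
Your proof is correct and takes essentially the paper's approach. Part (2) is the same construction: a ${<}\kappa$-closed fusion sequence $s_\xi$ meeting each dense set, followed by ${<}\kappa^+$-independence. In part (1) the paper pigeonholes the same at most $2^\lambda$ possible columns, but for each basic cone $[s]$ it picks two coordinates outside $\dom(s)$ with equal columns; you instead fix one $\kappa$-sized block $S$ of equal columns in advance and obtain a single nowhere dense superset $N_S$, which is a cosmetic difference. In the strong-limit clause you correctly conclude that a set of size $\mu$ is $\kappa$-meagre; the paper's text has the typo $X\notin\Mmucska$ there.
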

	\begin{proof}
		\begin{enumerate}[wide,label={\textit{(\arabic*)}}]\item
			Assume that $\kappa<\lambda<2^\lambda<\mu$. Let $\abset{f_\eta}{ \eta\in\lambda}$ be a sequence in $\mucs$ and define $g_\xi\in\lacs$ by $g_\xi\colon \eta\mapsto f_\eta(\xi)$ for each $\eta\in\lambda$ and $\xi\in\mu$. Suppose $s\colon \mu\pto2$ and $\card{\dom(s)}<\kappa$. Since $2^\lambda<\mu$, there are two distinct $\xi,\xi'\in\mu\setminus\dom(s)$ such that $g_\xi=g_{\xi'}$. Let $s'\supset s$ be an extension such that $s'(\xi)=0$ and $s'(\xi')=1$, then $f_\eta\notin [s']$ for all $\eta\in\lambda$, because $f_\eta(\xi)=g_\xi(\eta)=g_{\xi'}(\eta)=f_\eta(\xi')$ whereas $s'(\xi)\neq s'(\xi')$. Therefore $\set{f_\eta}{ \eta\in\lambda}$ is nowhere dense, hence every $X\subset\mucs$ with $|X|\leq\lambda$ is nowhere dense.
			
			If $\mu$ is strong limit and $X\subset\mucs$ with $\card X=\mu$, there is a sequence $\abset{\mu_\xi}{\xi\in\kappa}$ cofinal in $\mu$ and  $X_\xi$ such that $X=\Cup_{\xi<\kappa}X_\xi$ with $\card{X_\xi}=\mu_\xi<2^{\mu_\xi}<\mu$, and thus by the above $X\notin \Mmucska$.
			
			\item 
			Let $\kappa<\lambda<\mu\leq 2^\lambda$ and let $\cal X=\set{X_\xi}{ \xi\in\mu}\subset[\lambda]^\lambda$ be a ${<}\kappa^+$-independent family of cardinality $\mu$ and let $g_\xi$ be the characteristic functions of $X_\xi$ for each $\xi\in\mu$. Define $f_\eta\in\mucs$ by $f_\eta(\xi)=g_\xi(\eta)$ for each $\eta\in\lambda$ and $\xi\in\mu$. We claim that $\cal F=\set{f_\eta}{ \eta\in\lambda}\notin\Mmucska$. 
			
			Suppose that $\abset{\cal D_\alpha}{ \alpha\in\kappa}$ is a sequence of open dense subsets of $\mucs$. Recursively define a sequence $\abset{s_\alpha}{ \alpha\in\kappa}$ such that $s_\alpha\colon \mu\pto 2$ with $\card{\dom(s_\alpha)}<\kappa$, $[s_\alpha]\subset D_\alpha$ and $s_\alpha\subset s_{\alpha'}$ for all $\alpha<\alpha'$. Let $s=\Cup_{\alpha\in\kappa}s_\alpha$, then $s\colon \mu\pto2$ with $\dom(s)\leq\kappa$. Since $\cal X$ is a ${<}\kappa^+$-independent family, there is some $\alpha\in \Cap_{\xi\in s^{-1}(1)}X_\xi\setminus \Cup_{\xi\in s^{-1}(0)}X_\xi$. It then follows that $s(\xi)=g_\xi(\alpha)=f_\alpha(\xi)$ for each $\xi\in\dom(s)$, thus $f_\alpha\in D_\alpha$ for each $\alpha\in\kappa$. In other words, $\cal F$ intersects every intersection of $\kappa$-many open dense sets, and thus is not $\kappa$-meagre.\qedhere
		\end{enumerate}
	\end{proof}
	
	Let us now move to the topic of consistent values for the uniformity numbers. We first use the combinatorics of the regular $\kappa$-meagre ideal to show that any of the uniformity numbers can have any regular value above $\mu$. Simultaneously this shows that each of the three lower bounds $\non(\Mkakaka)$,  $\cf(\sqb\mu^\kappa)$ and $\mu^{<\kappa}$ of $\non(\Mkamuka)$ may be strict.
	
	\begin{thm}\label{thm:regular values for non kakaka}
		Assume $\sf{GCH}$ and let $\lambda>\kappa$ be regular, then there is a cardinal-preserving forcing extension in which  $\mu^{<\kappa}<\cf([\mu]^\kappa)=\mu^+$ and $\non(\Mkakaka)=2^\kappa=\lambda$ and $2^\mu=2^\kappa\cdot\mu^+$.
	\end{thm}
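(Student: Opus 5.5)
We will force over a model of $\sf{GCH}$ with the ${<}\kappa$-support product $\bb C_\kappa^\lambda$ of $\lambda$-many copies of $\kappa$-Cohen forcing $\bb C_\kappa=\Ckacska$, that is, with $\rm{Fn}_\kappa(\lambda\times\kappa,2)$. This forcing is ${<}\kappa$-closed and, by $\sf{GCH}$ ($2^{<\kappa}=\kappa$), it is $\kappa^+$-c.c. by the $\Delta$-system lemma, so it preserves all cofinalities and cardinals. The verification then splits into cardinal arithmetic in the extension and the computation of $\non(\Mkakaka)$.

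For the arithmetic, nice names give $2^\kappa=(\lambda^\kappa)^{\bf V}=\lambda$, since $\lambda>\kappa$ is regular under $\sf{GCH}$. No new subsets of size ${<}\kappa$ are added, so ${}^{<\kappa}\mu$ is unchanged and $\mu^{<\kappa}=\mu$ (under $\sf{GCH}$, $\mu^{<\kappa}=\mu$ as $\cf(\mu)=\kappa$). For $\cf(\sqb\mu^\kappa)$, every $X\in\sqb\mu^\kappa$ in the extension is covered by a ground model set of size $\kappa$ by the $\kappa^+$-c.c. (take the union of the $\le\kappa$ possible values of each of the $\kappa$ names). Hence $\cf(\sqb\mu^\kappa)\le\cf(\sqb\mu^\kappa)^{\bf V}=\mu^+$, and $\mu^+\le\cf(\sqb\mu^\kappa)$ holds in $\sf{ZFC}$, so $\mu^{<\kappa}=\mu<\mu^+=\cf(\sqb\mu^\kappa)$. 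Finally, $2^\mu\le(\lambda^\kappa\cdot2^\mu)^{\bf V}=\lambda\cdot\mu^+$ by counting nice names for subsets of $\mu$, and conversely $2^\mu\ge2^\kappa=\lambda$ and $2^\mu\geq\mu^+$. This gives $2^\mu=2^\kappa\cdot\mu^+$.

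It remains to show $\non(\Mkakaka)=\lambda$ in the extension. The upper bound $\non(\Mkakaka)\le2^\kappa=\lambda$ is immediate. For the lower bound, take any $A\subset\kaka$ in $\bf V[G]$ with $\card A<\lambda$. Each element has a nice name involving only $\kappa$ coordinates, so by regularity of $\lambda$ there is a set $I\subset\lambda$ of size $<\lambda$ with $A\subset{\bf V}[G\restr I]$. Pick a coordinate $\gamma\in\lambda\setminus I$. The Cohen generic $c_\gamma\in\kacs$ (equivalently a $\kappa$-Cohen real in $\kaka$) is $\bb C_\kappa$-generic over ${\bf V}[G\restr I]$ by the product lemma. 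The standard argument then shows that $A$ is $\kappa$-meagre. Work with $\kaka$ coded via $\kacs$ (or use a Cohen element of $\kaka$ directly, since $\bb C_\kappa$ is forcing-equivalent to Cohen forcing on $\kaka$). For each $x\in A$, the set $N_x=\set{y}{\forall\alpha\in\kappa\,(y(\alpha)\ne x(\alpha))}$-style sets do not quite work, so instead we use the classical trick: let $c$ be the generic element of $\kaka$ and define the nowhere dense sets $N_\xi=\set{x\in\kaka}{\forall\alpha\ge\xi\,\exists\beta\in\iv{\alpha}{\kappa}\ x\restr[\beta,\beta+\ldots)\neq\ldots}$. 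More simply, the set $M_c=\set{x}{\forall\xi\,\exists\alpha>\xi\,(x\restr\iv{\alpha}{\alpha+\omega}\ \text{copies}\ c\ \text{appropriately})}^{\complement}$ is $\kappa$-meagre, coded in ${\bf V}[c]$, and contains every element of ${\bf V}[G\restr I]$ by genericity. Concretely, for a ground element $x$ and the generic $c$, density arguments show that $x$ and $c$ are never eventually ``matching'' on the blocks of an interval partition read off from $c$. The union of $\kappa$ many nowhere dense sets defined from $c$ covers $\kaka\cap{\bf V}[G\restr I]\supseteq A$, so $A\in\Mkakaka$.

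The main obstacle is this last step: writing down cleanly a $\kappa$-meagre set coded from a $\kappa$-Cohen generic that contains all ground model elements of $\kaka$, and checking that each piece is nowhere dense in the ${<}\kappa$-box topology. Regularity of $\kappa$ is used here so that initial segments of length ${<}\kappa$ can always be extended. The first choice to try is $\Cup_{\xi<\kappa}\set{x}{\forall\alpha\ge\xi\ x(\alpha)\ne c(\alpha)}$. Each piece is closed, and it is nowhere dense because one can extend any condition to agree with $c$ at some large $\alpha$. Genericity of $c$ over ${\bf V}[G\restr I]$ then gives, for each ground $x$, that $x(\alpha)\ne c(\alpha)$ for all sufficiently large $\alpha$, by density of the conditions that disagree with $x$ past any given point. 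The delicate point is that "eventually different" requires a $\kappa$-sequence of agreements to be avoided. One should therefore check that the set of conditions forcing disagreement past some $\xi$ is dense, and if it is not, replace pointwise disagreement by disagreement on $\kappa$-blocks of an interval partition.
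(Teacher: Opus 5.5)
There is a genuine gap: the choice of forcing is wrong. $\kappa$-Cohen forcing makes $\non(\Mkakaka)$ small, not large. Work in your model, the ${<}\kappa$-support product of $\lambda$ copies of $\bb C_\kappa$ over $\sf{GCH}$, and view the generics as elements of $\kaka$. The first $\kappa^+$ generics $\{c_\alpha\mid\alpha<\kappa^+\}$ form a set that is not $\kappa$-meagre. Indeed, any $\kappa$-meagre set is contained in a union of $\kappa$ closed nowhere dense sets. These are coded by a subset of ${}^{<\kappa}\kappa\times\kappa$, which has size $\kappa$. By the $\kappa^+$-c.c.\ this code lies in ${\bf V}[G\restr J]$ for some $J$ with $|J|\le\kappa$. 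Any $c_\alpha$ with $\alpha\in\kappa^+\setminus J$ is generic over ${\bf V}[G\restr J]$, so it avoids every closed nowhere dense set coded there. Hence $\non(\Mkakaka)=\kappa^+$ in your extension, and the theorem fails there for every $\lambda>\kappa^+$.

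Concretely, your step ``the standard argument then shows that $A$ is $\kappa$-meagre'' is false, and the direction of density is the reverse of what you need. Fix $x\in{\bf V}[G\restr I]$ and $\xi<\kappa$. The conditions that agree with $x$ at some $\alpha\geq\xi$ are dense, so $c(\alpha)=x(\alpha)$ for cofinally many $\alpha$. Thus $x\notin\bigcup_{\xi<\kappa}\{y\mid\forall\alpha\geq\xi\,(y(\alpha)\neq c(\alpha))\}$. No condition with bounded domain can force eventual disagreement, and passing to $\kappa$-blocks changes nothing, since agreement on cofinally many blocks is equally dense. Classically, a Cohen real puts the ground-model reals inside the comeagre null set $\{y\mid\exists^\infty n\; y\restr I_n=c\restr I_n\}$: it makes them null, not meagre.

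What you need is a generic that makes earlier elements of $\kaka$ $\kappa$-meagre. The paper uses the Cummings--Shelah ${<}\kappa$-support iteration of $\kappa$-Hechler forcing of length $\lambda$, which forces $\fr b_\kappa=2^\kappa=\lambda$. It then applies $\fr b_\kappa\leq\non(\Mkakaka)$. The reason is that if $d$ eventually dominates every element of $A$, then $A\subset\bigcup_{\xi<\kappa}\{x\mid\forall\alpha\geq\xi\,(x(\alpha)\leq d(\alpha))\}$. Each of these sets is closed nowhere dense: extend any $s\in{}^{<\kappa}\kappa$ by the value $d(\alpha)+1$ at some $\alpha\geq\xi$ outside $\dom(s)$.

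In your own framework this reads as follows. A set $A$ of size ${<}\lambda$ lies in an initial stage ${\bf V}[G_\gamma]$, by regularity of $\lambda$ and the $\kappa^+$-c.c. The Hechler generic added at stage $\gamma$ then dominates all of $A$. This iteration is also ${<}\kappa$-closed and $\kappa^+$-c.c. So your computations of $\mu^{<\kappa}=\mu$, $\cf([\mu]^\kappa)=\mu^+$, $2^\kappa=\lambda$ and $2^\mu=2^\kappa\cdot\mu^+$ go through essentially unchanged.
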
 
	\begin{proof}
		\citet{CummingsShelah95} showed that $\fr b_\kappa=2^\kappa=\lambda$ can be achieved through a $\lambda$-length iteration of $\kappa$-Hechler forcing with ${<}\kappa$-support. Since $\fr b_\kappa\leq\non(\Mkakaka)$, this iteration is the required forcing notion. Note that this iteration is both ${<}\kappa^+$-c.c.\ and ${<}\kappa$-closed and hence preserves cardinals and cofinalities.\footnote{See also \cite[\S\,4.2]{BrendleBrookeTaylorFriedmanMontoya18}.} The chain condition also implies that every $A\in\sqb\mu^\kappa$ in the forcing extension is contained in some $B\in\sqb\mu^\kappa$ from the ground model, and thus $\cf(\sqb\mu^\kappa)=\mu^+$ is witnessed by the ground model set $(\sqb\mu^\kappa)^\bf V$. On the other hand, ${<}\kappa$-closure implies that no sets of ordinals of size ${<}\kappa$ are added, thus $\mu^{<\kappa}=\mu$ holds in the extension.
	\end{proof}

	\begin{thm}
		Assume $\sf{GCH}$ and let $\lambda,\nu$ be regular cardinals with $\kappa<\lambda\leq\nu<\mu$, then there is a cardinal-preserving forcing extension in which $\non(\Mkakaka)=\lambda$ and $\non(\Mmucska)=\nu$.
	\end{thm}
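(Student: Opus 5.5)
The plan is a two-step forcing over the $\sf{GCH}$ ground model. The first step fixes $\non(\Mkakaka)=\lambda$ and the second makes $2^\nu$ large without touching anything of size ${\le}\kappa$. The lower bound $\nu\le\non(\Mmucska)$ will come from \cref{mu < non Mmucska}\,(1), and the upper bound $\non(\Mmucska)\le\nu$ from \cref{mu < non Mmucska}\,(2).

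\emph{Step 1.} As in \cref{thm:regular values for non kakaka}, force with the ${<}\kappa$-support iteration of $\kappa$-Hechler forcing of length $\lambda$ to obtain $\bf V_1$. In $\bf V_1$ we have $\fr b_\kappa=2^\kappa=\lambda$, hence $\lambda=\fr b_\kappa\le\non(\Mkakaka)\le 2^\kappa=\lambda$. This forcing is ${<}\kappa$-closed and $\kappa^+$-c.c., so it preserves cardinals. Since it has size $\lambda$, $\sf{GCH}$ persists above $\lambda$, and $2^\theta=\lambda$ for $\kappa\le\theta<\lambda$. In particular $\bf V_1$ satisfies $\nu^{<\nu}=\nu$ (use $\lambda\le\nu$ and the regularity of $\nu$) and $\nu^\kappa=\nu$.

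\emph{Step 2.} In $\bf V_1$, force with $\mathrm{Add}(\nu,\mu^+)$ to obtain $\bf V_2$. By $\nu^{<\nu}=\nu$ this forcing is ${<}\nu$-closed and $\nu^+$-c.c., so it preserves cardinals. In $\bf V_2$ we have $2^\nu=\mu^+\ge\mu$. Being ${<}\nu$-closed with $\nu>\kappa$, it adds no new elements of $\kaka$ and no new $\kappa$-sequences of ground-model objects. Hence the families of $\kappa$-meagre subsets of $\kaka$ (each coded by a $\kappa$-sequence of subsets of $\fkamu$-type basic sets of size ${\le}2^{<\kappa}$) and the space $\kaka$ itself are unchanged, so $\non(\Mkakaka)=\lambda$ still holds in $\bf V_2$. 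Moreover, for $\theta<\nu$ no new subsets of $\theta$ are added, so $2^\theta\le\max(\lambda,\theta^+)\le\nu<\mu$.

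\emph{Bounds for $\non(\Mmucska)$.} For the lower bound, apply \cref{mu < non Mmucska}\,(1) to each $\theta<\nu$. When $\kappa<\theta$, we have $\theta<2^\theta<\mu$, so $\theta<\non(\Mmucska)$. For $\theta\le\kappa$ the bound $\kappa^+\le\add\le\non$ suffices. Together these give $\nu\le\non(\Mmucska)$. For the upper bound, use \cref{mu < non Mmucska}\,(2) with $\nu$ in the role of $\lambda$: we have $\kappa<\nu<\mu\le2^\nu$. It remains to produce a ${<}\kappa^+$-independent family $\cal X\subset[\nu]^\nu$ of size $\mu$.

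\emph{Main obstacle.} The step I expect to need the most care is producing this independent family. I would use the generalised Hausdorff/Engelking--Karłowicz construction: if $\nu^{\kappa}=\nu$ (true in $\bf V_2$, since $\nu$ is regular, $2^\kappa=\lambda\le\nu$ and no new $\kappa$-subsets are added), then there is a ${<}\kappa^+$-independent family of size $2^\nu$ in $[\nu]^\nu$. Concretely, identify $\nu$ with the set of pairs $(a,\cal A)$, where $a\in[\nu]^{\le\kappa}$ and $\cal A\subset[\nu]^{\le\kappa}$ is a ${\le}\kappa$-sized family of subsets of $a$; there are $\nu$ many such pairs. For $Z\subset\nu$, let $X_Z=\set{(a,\cal A)}{Z\cap a\in\cal A}$. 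For disjoint families of at most $\kappa$ many distinct $Z$'s, choose $a$ separating them, and then $\nu$-many suitable pairs witness the Boolean combination. Take any $\mu$ of these sets. This yields $\non(\Mmucska)\le\nu$, completing the proof.
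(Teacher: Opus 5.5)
Your proposal is correct. Its skeleton matches the paper's proof:
\begin{itemize}
\item the same two forcings, namely the length-$\lambda$ $\kappa$-Hechler iteration followed by the ${<}\nu$-support product of $\mu^+$ copies of $\nu$-Cohen forcing;
\item the same preservation of $\non(\Mkakaka)=\lambda$ via ${<}\nu$-closure;
\item the same lower bound $\nu\le\non(\Mmucska)$, obtained from \cref{mu < non Mmucska}\,(1) and the minimality of $\nu$ with $2^\nu\ge\mu$.
\end{itemize}

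You diverge genuinely in the upper bound. The paper factors the product as $\mathbb P\ast\dot{\mathbb C}_\nu$ and works with a forcing $\mathbb Q$ equivalent to $\mathbb C_\nu$ whose generic determines a tree of approximations $C_\gamma$. It takes the branches of that tree as the ${<}\kappa^+$-independent family and checks independence by a density argument. You instead note that in $\mathbf V_2$ one has $\nu^\kappa=\nu$ and $2^\kappa=\lambda\le\nu$. The Hausdorff-type construction on the pairs $(a,\mathcal A)$ then gives, in ZFC, a ${<}\kappa^+$-independent family of size $2^\nu\ge\mu$ in $[\nu]^\nu$. Your verification of it is right: restricting to $\card{\mathcal A}\le\kappa$ keeps the index set of size $\nu$. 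You separate the ${\le}\kappa$ many $Z$'s on some $a\in[\nu]^{\le\kappa}$, and enlarging $a$ by single points gives $\nu$ many witnesses.

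What your route buys:
\begin{itemize}
\item It is more elementary and modular. It isolates a ZFC fact: $\kappa<\nu<\mu\le2^\nu$ together with $\nu^\kappa=\nu$ already yields $\non(\Mmucska)\le\nu$. The second forcing is then needed only to arrange cardinal arithmetic.
\item It sidesteps the points the paper leaves to the reader about $\mathbb Q$: its closure, its equivalence with $\mathbb C_\nu$, and that its generic tree has $2^\nu\ge\mu$ branches.
\end{itemize}

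What the paper's route buys: it reads the family off the generic, so it does not need the counting $\nu^\kappa\cdot2^\kappa=\nu$. Comparable arithmetic, namely $\nu^{<\nu}=\nu$, is needed anyway for the $\nu^+$-c.c.\ and hence cardinal preservation. You verify this explicitly; the paper does not.

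One small imprecision in your Step 2. The $\kappa$-meagre ideal on $\kaka$ is not literally unchanged when $\lambda=\nu$, since new subsets of closed nowhere dense sets of size $\lambda$ can appear. What you actually need, and what ${<}\nu$-closure gives, is that every subset of $\kaka$ of size ${<}\lambda\le\nu$ in $\mathbf V_2$ already lies in $\mathbf V_1$. There it is covered by a $\kappa$-sequence of nowhere dense sets whose codes (dense subsets of ${}^{<\kappa}\kappa$) are absolute.
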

	\begin{proof}
		We first use \cref{thm:regular values for non kakaka} to get a model where $\non(\Mkakaka)=2^\kappa=\lambda$. Take the resulting model as our ground model $\bf V$. Let $\bb P$ be the $\mu^+$-length ${<}\nu$-support product of $\nu$-Cohen forcing $\bb C_\nu:=\Cnucsnu$ (in the notation of \cref{lmm:collapse forcing lemma}). If $G$ is $\bb P$-generic over $\bf V$, then in $\bf V[G]$ it follows that $\nu$ is minimal with the property that $2^\nu\geq\mu$. Since $\bb P$ is ${<}\nu$-closed, it is certainly ${<}\lambda$-closed as well. Therefore we see that $([\kaka]^{<\lambda})^{\bf V[G]}=([\kaka]^{<\lambda})^\bf V$, and consequently every subset of $\kaka$ of size ${<}\lambda$ is $\kappa$-meagre in $\bf V[G]$, that is, $\bf V[G]\md\ap{\non(\Mkakaka)=\lambda}$.
		
		Similar to classical Cohen forcing, $\bb P$ is forcing equivalent to the $2$-step iteration $\bb P\ast \dot{\bb C}_\nu$. Thus, if $G$ is $\bb P$-generic over $\bf V$, then there exist $G_1,G_2$ such that $G_1$ is $\bb P$-generic over $\bf V$ and $G_2$ is $\bb C_\nu$-generic over $\bf V[G_1]$. We work in $\bf V[G_1]$ and describe a forcing notion $\bb Q$ that is equivalent to $\bb C_\nu$.\footnote{To see this, it suffices to observe that $\bb Q$ is ${<}\nu$-closed, separative, and has cardinality $\nu$. Any such forcing notion is equivalent to $\nu$-Cohen forcing, similar to how any nontrivial countable forcing notion is equivalent to $\omega$-Cohen forcing.}
		The conditions of $\bb Q$ are pairs $(C,\gamma)$ where $\gamma\in\nu$ and $C\subset \cal P(\gamma)$, ordered by $(D,\delta)\leq(C,\gamma)$ if and only if
		\begin{enumerate}
			\item $\gamma\leq\delta$,
			\item for each $c\in C$ there is $d\in D$ such that $c\subset d$,
			\item $d\cap \gamma\in C$ for each $d\in D$.
		\end{enumerate}
		If $H$ is a $\bb Q$-generic filter, then for each $\gamma\in\nu$ there is a unique $C\subset\cal P(\gamma)$ such that $(C,\gamma)\in H$, thus let us write $C_\gamma$ for such $C$. Let $\cal X$ be the family of subsets $x\subset\nu$ such that for every $\gamma\in\nu$ we have $x\cap \gamma\in C_\gamma$. It is easy to see that $\card{\cal X}=2^{\nu}\geq \mu$. We claim that $\cal X$ is ${<}\kappa^+$-independent, which implies that $\bf V[G]=\bf V[G_1][G_2]\md\ap{\non(\Mmucska)=\nu}$ by \cref{mu < non Mmucska}.
		
		Namely, if $A,B\in\sqb{\cal X}^\kappa$ are disjoint, then by regularity of $\nu$ there is some $\gamma\in\nu$ such that $x\cap \gamma\neq x'\cap \gamma$ for all distinct $x,x'\in A\cup B$. For each $\alpha_0\in\nu$, the set of conditions $(D,\delta)$ that force that there exists an $\alpha\geq\alpha_0$ for which $\alpha\in\Cap A\setminus\Cup B$ is dense below $(C_\gamma,\gamma)$.
	\end{proof}
	
	For the space $(\kamu,\rm{bd})=(\kamu,\kappa)$ we have the upper bound $\non(\Mkamuka)\leq \mu^\kappa$. We will show that this upper bound is consistently strict when we assume that $\kappa$ is uncountable.
	
	\begin{thm}
		Assume $\sf{GCH}$ and that $\kappa$ is uncountable, and let $\lambda>\mu$ be a cardinal with $\cf(\lambda)>\kappa$. Then there is a cardinal-preserving forcing extension in which $\non(\Mmumuka)\leq\mu^+\leq\mu^\kappa=\lambda$.
	\end{thm}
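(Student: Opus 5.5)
Starting from a $\sf{GCH}$ ground model $\bf V$, I will add $\lambda$-many Cohen generic subsets of $\omega$, i.e.\ force with $\bb P=\rm{Fn}_{\omega}(\lambda\times\omega,2)$. The forcing is c.c.c., so it preserves all cardinals and cofinalities. The standard counting of nice names gives $2^{\aleph_0}=\lambda$. Since $\cf(\lambda)>\kappa$ and $\lambda^\kappa=\lambda$ in the ground model, we also get $\mu^\kappa=2^\kappa=\lambda$: the lower bound is $\mu^\kappa\geq 2^{\aleph_0}=\lambda$, and the upper bound comes from counting nice names, $(\lambda^{\aleph_0})^{\mu^\kappa}$ evaluated in $\bf V$, which equals $\lambda$ by $\sf{GCH}$ and $\cf(\lambda)>\kappa$. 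So the middle inequality $\mu^+\leq\mu^\kappa=\lambda$ is immediate from $\lambda>\mu$. The real task is to show $\non(\Mmumuka)\leq\mu^+$.

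For that I will exhibit a non-$\kappa$-meagre subset of $(\mumu,\kappa)$ of size $\mu^+$. The point is that after the forcing $2^{\aleph_0}\geq\mu$, so $2^{<\kappa}\geq\mu$ because $\kappa$ is uncountable. By \cref{thm:homeo mumu ka and muka ka}, $(\mumu,\kappa)\cong(\muka,\kappa)$, and by \cref{thm:mucs muka same} together with \cref{thm:weakly compact homeomorphism} (a weakly compact $\kappa$ would be a strong limit, contradicting $2^{\aleph_0}\geq\mu$), also $\cong(\mucs,\kappa)$. Hence it suffices to bound $\non(\Mmucska)$.

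I will apply \cref{mu < non Mmucska}\,(2) with $\omega$ playing the role of the small cardinal. Although the theorem is stated with $\lambda>\kappa$, the proof works verbatim with any cardinal $\theta$ that carries a ${<}\kappa^+$-independent family $\cal X\subset[\theta]^\theta$ of size $\mu$. The argument only uses that $\bigcap A\setminus\bigcup B\neq\emp$ for disjoint $A,B$ of size ${\leq}\kappa$. So I will look for a cardinal $\theta\leq\mu^+$ with such a family. The natural choice is $\theta=\mu^+$ (or $\mu$ itself): in the extension $2^{\aleph_0}=\lambda>\mu^+$, and the generic reals give many independent subsets.

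The main obstacle is ${\leq}\kappa$-independence, because c.c.c.\ Cohen reals only give finite independence directly. My plan is to work in $\bf V[G]$ and write $\mu^+$ as $\mu^+\times\omega$. For each $\xi<\mu$, let $X_\xi=\set{(\alpha,n)}{c_{\alpha,\xi}(n)=1}$, using Cohen reals $c_{\alpha,\xi}$ indexed by $\mu^+\times\mu\subset\lambda$. Now fix disjoint $A,B$ of size ${\leq}\kappa$, in the extension. By c.c.c.\ they lie in a ground model set of size $\kappa$. Choose $\alpha<\mu^+$ so that the coordinates $\set{(\alpha,\xi)}{\xi\in A\cup B}$ are generic over the model containing $A,B$; this is possible since $A,B$ have names depending on only $\kappa$-many coordinates, while $\mu^+>\kappa$. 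A density argument then finds $n$ with $c_{\alpha,\xi}(n)=1$ for $\xi\in A$ and $=0$ for $\xi\in B$. This needs infinitely many reals to agree with a prescribed $\kappa$-pattern at one $n$, and that fails for finite-support Cohen reals.

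So I expect this step to fail as stated, and the fix is to use countably many coordinates $n$ differently. For each $\alpha$, use a single Cohen real $c_\alpha$ coding, via a bijection $\omega\cong{}^{<\omega}2$, a branch, and make the $\xi$-th set determined by the bits of a generic function $\mu\to2$ attached to $\alpha$. Concretely, I would instead let $\theta=\mu^+$ and make the $\alpha$-th point of the family be a generic $f_\alpha\in\mucs$ added by $\rm{Fn}_{\omega_1}$-style but c.c.c.\ means. Genericity over any $\kappa$-sized piece then yields agreement with any $s\colon\mu\pto2$ of size $\kappa$ on the relevant countable part. This forces reducing the $\kappa$-patterns to countable ones by the density argument of \cref{mu < non Mmucska}\,(2) restricted to each $D_\alpha$. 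This reduction is the step I would expect to need the most care.
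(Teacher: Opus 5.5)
\textbf{In your model the conclusion fails.} Adding $\lambda$ Cohen reals makes $2^{\aleph_0}=\lambda$. Since $\kappa>\omega$, every basic open set $[s]$ of $(\mumu,\kappa)$ contains the $\lambda$ pairwise disjoint cones $[s\cup t]$, $t\in{}^C2$, for any countable $C\subset\mu\setminus\dom(s)$. So every subset of $\mumu$ of size ${<}\lambda$ is nowhere dense, and $\non(\Mmumuka)\ge\lambda$. This is just the bound $\mu^{<\kappa}\le\non(\Mkamuka)\le\non(\Mmumuka)$ of \cref{thm:uniformity lower bounds}, combined with $\mu^{<\kappa}\ge2^{\aleph_0}$. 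Hence whenever $\lambda>\mu^+$ (the case the theorem is aimed at) your extension satisfies $\non(\Mmumuka)>\mu^+$. Making the continuum as large as $\lambda$, rather than merely ${\ge}\mu$, is exactly what breaks things: any model of the statement must keep $\mu^{<\kappa}$, hence $2^{\aleph_0}$, at most $\mu^+$.

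The route through \cref{mu < non Mmucska}\,(2) is hopeless in any model. Suppose all Boolean combinations of ${\le}\kappa$ members of $\cal X$ are nonempty subsets of $\theta$. Then the $2^\kappa$ combinations coming from one fixed $\kappa$-sized subfamily are pairwise disjoint, so $\theta\ge2^\kappa$; equivalently, the set $\cal F$ built there extends every $s\colon\mu\pto2$ with $\card{\dom(s)}\le\kappa$. But in any model of the conclusion with $\lambda>\mu^+$ we have $\cf(\sqb\mu^\kappa)\le\non(\Mmumuka)\le\mu^+<\lambda=\mu^\kappa=2^\kappa\cdot\cf(\sqb\mu^\kappa)$, so $2^\kappa=\lambda>\mu^+$. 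Your closing ``$\rm{Fn}_{\omega_1}$-style but c.c.c.'' repair does not touch this counting obstruction.

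\textbf{What works instead.} The paper first forces $2^{\aleph_0}=\mu$, which is where $\cf(\mu)=\kappa>\omega$ is used, so that $\mu^{<\kappa}=\mu$. It then adds $\lambda$ $\kappa$-Cohen subsets of $\kappa$ with ${<}\kappa$-support. The witnesses are $\mu^+$ points of $\mumu$ assembled from disjoint blocks of generics, transcribed into $\kamu$ via a bijection $\omcs\to\mu$. A support-counting argument then leaves some block untouched by the names of any $\kappa$ given open dense sets.

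Be aware that, in this order, the second step is problematic. Once $2^{\aleph_0}=\mu>\kappa$, $\bb C_\kappa$ has $\mu$ pairwise incompatible conditions with domain $\omega$, so it is not $\kappa^+$-c.c. Worse, by density its generic, read in $\omega$-blocks, lists every ground-model real, so it collapses $\mu$ to $\kappa$.

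A cleaner argument keeps the principle ``$\mu^{<\kappa}$ small, $2^\kappa$ large'' but stays in $\bf V^{\bb P}$. Here $\bb P$ is the ${<}\kappa$-support product of $\lambda$ copies of $\bb C_\kappa$ over $\sf{GCH}$, so $\mu^{<\kappa}=\mu$, $\cf(\sqb\mu^\kappa)=\mu^+$ and $2^\kappa=\mu^\kappa=\lambda$. Put a name for $\abset{D_\xi}{\xi\in\kappa}$ into a ${<}\kappa$-closed $N\prec H(\theta)^{\bf V}$ with $\kappa\subset N$ and $\card N=\kappa$. The $\kappa^+$-c.c.\ gives $N[G]\cap\mu=N\cap\mu=:a\in(\sqb\mu^\kappa)^{\bf V}$. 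Elementarity, using that $\bb P$ adds no new ${<}\kappa$-sequences, then shows that each $D_\xi\cap K_a$ is open dense in
$K_a:=\set{y\in\mumu}{\ran(y\restr a)\subset a\text{ and }y\restr(\mu\setminus a)\equiv0}$,
a closed subspace homeomorphic to $(\kaka,\kappa)$. Choose, in each of the $\mu^+$ subspaces $K_a$ with $a\in(\sqb\mu^\kappa)^{\bf V}$, a non-$\kappa$-meagre set of size $\non(\Mkakaka)=\kappa^+$. Their union is a non-$\kappa$-meagre subset of $\mumu$ of size $\mu^+$.
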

	
	\begin{proof}
		First we force with (classical) Cohen forcing to obtain a model of $2^{\aleph_0}=\mu$. We will take this model as our ground model $\bf V$ and fix a bijection $b\colon\omcs\to\mu$. Now let $\bb P$ be the $\lambda$-length ${<}\kappa$-support product of $\kappa$-Cohen forcing $\bb C_\kappa$. Since $\bb P$ is ${<}\kappa$-closed (and thus $\sigma$-closed) and does not add new ordinals, it follows that $b$ is still a bijection in the extension by $\bb P$. This bijection lifts to a bijection $b^*\colon{}^\kappa(\omcs)\to\kamu$. This allows us to canonically reinterpret the first $\mu^+$-many $\kappa$-Cohen generics $\abset{x_\xi}{\xi\in\mu^+}$: We have $x_\xi\colon \kappa\to 2$, and thus by splitting $x_\xi$ into blocks of length~$\omega$, we may in fact view $x_\xi$ as a function $\kappa\to\omcs$, and thus $x_\xi^*:=b^*(x_\xi)$ is a function from $\kappa$ to $\mu$. Now, for each $\alpha\in\mu^+$ we define $y_\alpha\colon\mu\to\mu$ by concatenating blocks of $\mu$-many $x_\xi^*$'s. To be precise, for each $\beta\in\mu$,  $y_\alpha\restr [\kappa\beta,\kappa\beta+\kappa)$ coincides with $x^*_{\mu\alpha+\beta}$, or to be even more precise, if $\gamma\in\kappa$, then $y_\alpha(\kappa\beta+\gamma)=x_{\mu\alpha+\beta}^*(\gamma)$. Let $Y=\set{y_\alpha}{\alpha\in\mu^+}$, then we claim that $Y\notin\Mmumuka$.
		
		Working in the extension of $\bb P$, let $\abset{D_\xi}{\xi\in\kappa}$ be a sequence of open dense subsets of $(\mumu,\kappa)$. Each $D_\xi$ has some $S_\xi\subset\rm{Fn}_\kappa(\mu,\mu)$ such that $D_\xi=\Cup_{s\in S_\xi}[s]$, and since $\rm{Fn}_\kappa(\mu,\mu)$ is the same set as in the ground model (by ${<}\kappa$-closure) and has cardinality $\mu$, each $D_\xi$ is coded by some $f_\xi\in\mucs$. If $A_{\xi,\eta}\subset\bb P$ is a maximal antichain such that each $p\in A_{\xi,\eta}$ decides the value of $f_{\xi}(\eta)$, then $\card{A_{\xi,\eta}}\leq\kappa$ by the ${<}\kappa^+$-chain condition of $\bb P$, and thus $\card{\Cup_{\xi\in\kappa,\eta\in\mu}A_{\xi,\eta}}\leq\mu$. This implies that there is some $\alpha\in\mu^+$ such that no $p\in\Cup_{\xi\in\kappa,\eta\in\mu}A_{\xi,\eta}$ has a support intersecting with $[\mu\alpha,\mu\alpha+\mu)$. This makes all of the $x_{\mu\alpha+\beta}$ with $\beta\in\mu$ mutually $\kappa$-Cohen generic over $\bf V[\abset{D_\xi}{\xi\in\kappa}]$. 
		It is now an easy argument by $\kappa$-Cohen genericity to show that $y_\alpha\in \Cap_{\xi\in\kappa}D_\xi$, and thus that $Y$ intersects every $\kappa$-comeagre set of $(\mumu,\kappa)$. 
	\end{proof}
	
	We do not know if a similar theorem is provable for $\mu$ a  singular cardinal with countable cofinality.
	
	\begin{qst}
		If $\kappa=\aleph_0$, is $\non(\Mmumuka)<\mu^\kappa$ consistent? Or $\non(\Mkamuka)<\mu^\kappa$?
	\end{qst}

	\section{Dominating and unbounding numbers}\label{sec:Dominating}
	
	Classically, no treatment of the cardinal characteristics of the meagre ideal would be complete without mentioning the bounding and dominating numbers. This section is added mostly for the sake of completeness, and because we will need dominating numbers in \cref{sec:Cofinality}.

	We are primarily interested in domination on the sets of functions $\kaka$, $\kamu$, $\muka$ and $\mumu$. Let us assume $X$ is one of these four sets of functions and that $\delta$ is the domain of the functions in $X$, and let $f,g\in X$, then we define three orderings:
	\begin{enumerate}
		\item $f\leq_\rm{bd}g$ if the set $\set{\alpha\in\delta}{ g(\alpha)<f(\alpha)}$ is bounded in $\delta$. 
		\item$f\leq_\nu g$ if $|\set{\alpha}{ g(\alpha)<f(\alpha)}|<\nu$, where $\nu$ is a cardinal.
		\item$f\leq_\rm{all} g$ if $f(\alpha)\leq g(\alpha)$ for all $\alpha\in\delta$.
	\end{enumerate} 
	
	If $\leq_{\relplaceholder}$ is one of the orderings defined above, then we define the dominating number $\dXph$ as the least size of a set $D\subset X$ such that for each $f\in X$ there exists $g\in D$ with $f\leq_{\relplaceholder} g$. Dually we define the bounding number $\bXph$ as the least size of a set $B\subset X$ such that no $g\in X$ exists for which all $g\in B$ satisfy $f\leq_{\relplaceholder} g$.\footnote{Of course we implicitly assume this definition makes sense in the first place and define $\bXph=\Ord$ if the definition does not make sense, such as for $\bXnu$ where the domain $\delta$ of the elements of $X$ has $\delta<\nu$.} We note that clearly the following implications hold for any $f,g\in X$: 
	\begin{align*}
		f\leq_\rm{all} g\text{ implies }f\leq_\kappa g,\qquad
		f\leq_\kappa g\text{ implies }f\leq_\rm{bd}g,\qquad
		f\leq_\rm{bd} g\text{ implies }f\leq_\mu g.
	\end{align*}
	Therefore, we also see that $\bXal\leq\bXka\leq\bXbd\leq\bXmu$ and  $\dXmu\leq\dXbd\leq\dXka\leq\dXal$. In case the domain $\delta$ of the elements of $X$ is $\kappa$, the relation $\leq_\mu$ is trivial (i.e., equal to $X\times X$) and since $\kappa$ is regular, $\leq_\kappa$ and $\leq_\rm{bd}$ coincide. We also note that we may replace the range of the functions in $X$ with any set of the same cofinality and obtain the same cardinal characteristics, e.g., these domination relations give the same cardinal characteristics on the spaces $\kamu$ and $\kaka$; and similarly between $\mumu$ and~$\muka$. We will therefore purely consider the spaces $\kaka$ and $\muka$ from now on. 
	
	The following lemma shows that the choice of relation does not affect the dominating numbers.
	
	\begin{lmm}[{\cite[Theorem 3.13]{CardonaMejia25}, \cite[Proposition 12]{Brendle22}}]\label{dom all lambda}
		$\lambda^+\leq\dlakala=\dlakaal$ for $\lambda\geq\kappa$.\qed
	\end{lmm}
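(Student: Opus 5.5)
We need to prove $\lambda^+\leq\dlakala=\dlakaal$ for $\lambda\geq\kappa$, where the functions are in $\laka$ with $\kappa$ regular. The plan has three parts: the easy inequality, the lower bound by diagonalisation, and the main step, which recovers $\leq_\rm{all}$-domination from $\leq_\lambda$-domination.

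First, $\dlakala\leq\dlakaal$ is immediate, since $f\leq_\rm{all}g$ implies $f\leq_\lambda g$.

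Second, for the lower bound $\lambda^+\leq\dlakala$, I would diagonalise. Take a family $\set{g_\xi}{\xi<\lambda}\subset\laka$. Partition $\lambda$ into $\lambda$ disjoint pieces $P_\xi$, each of size $\lambda$. Define $f(\alpha)=g_\xi(\alpha)+1$ for $\alpha\in P_\xi$; this stays below $\kappa$ because $\kappa$ is a limit ordinal. Then for each $\xi$ we have $g_\xi(\alpha)<f(\alpha)$ on all of $P_\xi$, a set of size $\lambda$, so $f\not\leq_\lambda g_\xi$. Hence no family of size $\le\lambda$ is $\leq_\lambda$-dominating.

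Third, for $\dlakaal\leq\dlakala$, fix a $\leq_\lambda$-dominating family $D$ of size $\dlakala$. Since that size is at least $\lambda^+$, it absorbs any factor of $\lambda$. The idea is to enlarge $D$ to handle the small exceptional sets.
\begin{enumerate}
\item Fix a bijection $e\colon\lambda\to\lambda\times\lambda$. For $g\in D$ and $\eta<\lambda$, define $g^\eta(\alpha)=\sup_{\beta<\lambda}\ldots$; this naive version fails, since we cannot take sups of $\lambda$-many ordinals below $\kappa$ when $\lambda\geq\kappa$. So I will instead use a coding trick.
\item Given $f\in\laka$, build $f^*\in\laka$ by $f^*(\alpha)=\sup\set{f(\beta)}{\beta\in c(\alpha)}$, where $\alpha\mapsto c(\alpha)$ assigns small sets of coordinates (of size $<\kappa$, so that the sup is below $\kappa$ by regularity). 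We need that every $A\in[\lambda]^{<\lambda}$ is covered by $\bigcup_{\alpha\in B}c(\alpha)$ for every co-small $B$.
\item This suggests a different route: spread each coordinate $\beta$ over $\lambda$-many coordinates. Via the bijection, view $f$ as a function on $\lambda\times\lambda$ and set $\tilde f(\beta,\gamma)=f(\beta)$. If $\tilde f\leq_\lambda g$ with exceptional set $E$, $|E|<\lambda$, then for each $\beta$ there is some $\gamma$ with $(\beta,\gamma)\notin E$. Hence $f(\beta)\leq g(\beta,\gamma)$.
\end{enumerate}

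So the main obstacle is turning ``for each $\beta$ some $\gamma$ works'' into a single function independent of $f$. Here is what I would try. Take $D'$ to consist of the functions $h_{g,\vec\gamma}$ given by $h(\beta)=g(\beta,\gamma_\beta)$. That is too many choices, so instead restrict $E$ in advance: for $|E|<\lambda$, the set of $\gamma$ such that column $\gamma$ meets $E$ has size $<\lambda$. Therefore some single $\gamma^*<\lambda$ has column $\lambda\times\{\gamma^*\}$ disjoint from $E$; this uses only $|E|<\lambda$, not regularity of $\lambda$. Setting $g_\gamma(\beta)=g(\beta,\gamma)$, the family $\set{g_\gamma}{g\in D,\gamma<\lambda}$ has size $\dlakala\cdot\lambda=\dlakala$ and is $\leq_\rm{all}$-dominating: $f\leq_\rm{all}g_{\gamma^*}$. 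This gives $\dlakaal\leq\dlakala$. I expect this to resolve the obstacle; the only checks needed are these counting details.
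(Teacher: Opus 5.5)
Your proof is correct and complete. The paper gives no argument of its own here; it only cites Cardona--Mej\'ia and Brendle. Your two ingredients are the natural way to prove this kind of statement, and I believe they match the cited argument, though I cannot check those sources here:
\begin{enumerate}
\item diagonalising against $\lambda$ functions on $\lambda$ disjoint pieces of size $\lambda$ for $\lambda^+\le\fr d$;
\item spreading each coordinate over $\lambda$ disjoint copies, so that an exceptional set of size ${<}\lambda$ must miss an entire copy, for the equality.
\end{enumerate}
It is worth keeping your remark that the pigeonhole step uses only $\card{E}<\lambda$ and not regularity of $\lambda$. The lemma is stated for arbitrary $\lambda\geq\kappa$, and the paper applies it with $\lambda=\mu^{<\kappa}$, which can be singular (e.g.\ $=\mu$ under $\sf{GCH}$). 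In a final write-up, delete the abandoned attempts in your items 1 and 2 and the remark about ``restricting $E$ in advance''. The fact that some column $\lambda\times\ac{\gamma^*}$ misses $E$ is simply a consequence of $\card{E}<\lambda$.
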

	
	On the other hand, the bounding numbers on the space $\muka$ happen to all be absolute (and either equal to $\kappa$ or $\kappa^+$. This was proved in the Master's thesis of the first author, but we will repeat the argument here, as it seems to not exist in the literature.
	
	\begin{thm}[\cite{Hayashi23}]
		$\bmukaka=\bmukaal=\kappa$ and $\bmukamu=\bmukabd=\kappa^+$.
	\end{thm}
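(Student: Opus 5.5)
We prove the four equalities via the chain $\bmukaal\leq\bmukaka\leq\bmukabd\leq\bmukamu$, so it suffices to show $\kappa\leq\bmukaal$, $\bmukaka\leq\kappa$, $\kappa^+\leq\bmukabd$ and $\bmukamu\leq\kappa^+$. Throughout we fix a strictly increasing cofinal sequence $\abset{\mu_\xi}{\xi\in\kappa}$ in $\mu$.

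\emph{Lower bounds.} For $\kappa\leq\bmukaal$: given a family $B\subset\muka$ with $\card B<\kappa$, the pointwise supremum $g(\alpha)=\sup_{f\in B}f(\alpha)$ is $<\kappa$ by regularity of $\kappa$. Thus $g\in\muka$, and $f\leq_\rm{all}g$ for all $f\in B$.

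For $\kappa^+\leq\bmukabd$: let $B=\set{f_\xi}{\xi\in\kappa}$. Define $g(\alpha)=\sup_{\xi<\eta}f_\xi(\alpha)+1$ for $\alpha\in\iv{\mu_\eta}{\mu_{\eta+1}}$, and $g(\alpha)=0$ below $\mu_0$. This is $<\kappa$ since $\eta<\kappa$ and $\kappa$ is regular. Then $f_\xi(\alpha)<g(\alpha)$ for all $\alpha\geq\mu_{\xi+1}$, so $f_\xi\leq_\rm{bd}g$.

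\emph{Upper bounds.} For $\bmukaka\leq\kappa$: let $f_\xi$ for $\xi<\kappa$ be the constant function with value $\xi$. If $g\in\muka$, then $\mu=\Cup_{\xi<\kappa}\set{\alpha}{g(\alpha)<\xi}$. Since this is a union of $\kappa<\mu$ sets, some $\xi$ has $\card{\set{\alpha}{g(\alpha)<\xi}}\geq\kappa$; otherwise $\mu$ would have size $\leq\kappa$. Hence $f_\xi\not\leq_\kappa g$.

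For $\bmukamu\leq\kappa^+$, which is the main obstacle: we need $\kappa^+$ functions such that every $g$ has, for some member $f$, a set of size $\mu$ on which $g<f$. Fix $\abset{h_\zeta}{\zeta<\kappa^+}$ in $\kaka$ that is $\leq_\rm{bd}$-unbounded, which exists since $\bkakabd\leq\kappa^+$ via a $\leq^*$-increasing sequence unbounded in $\kaka$. Partition $\mu$ into $\kappa$ pieces $P_\eta$, each of size $\mu$; for instance, index $\mu$ as $\kappa\times\mu$. Define $f_\zeta(\alpha)=h_\zeta(\eta)$ for $\alpha\in P_\eta$.

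Given $g\in\muka$, we want some $\zeta$ with $\card{\set{\alpha}{g(\alpha)<f_\zeta(\alpha)}}=\mu$. For each $\eta$, consider the values of $g$ on $P_\eta$. Since $\card{P_\eta}=\mu>\kappa$, there is a least $\gamma_\eta<\kappa$ with $\card{\set{\alpha\in P_\eta}{g(\alpha)\leq\gamma_\eta}}\geq\mu_\eta$. This uses that $\mu$ is a union of $\kappa$ sets, so for each $\mu_\eta$ some level set must reach size $\mu_\eta$. Let $k(\eta)=\gamma_\eta$, so $k\in\kaka$. By unboundedness pick $\zeta$ with $\card{\set{\eta}{k(\eta)<h_\zeta(\eta)}}=\kappa$. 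For each such $\eta$, at least $\mu_\eta$ many $\alpha\in P_\eta$ satisfy $g(\alpha)\leq k(\eta)<f_\zeta(\alpha)$. Since $\kappa$ many such $\eta$ are cofinal in $\kappa$, the total is $\sup\mu_\eta=\mu$. Hence $f_\zeta\not\leq_\mu g$, and $\set{f_\zeta}{\zeta<\kappa^+}$ witnesses $\bmukamu\leq\kappa^+$.

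The delicate point is choosing $\gamma_\eta$ so that the threshold $\mu_\eta$ grows with $\eta$; this is what lets an unboundedness condition on a set of size $\kappa$ produce $\mu$ many witnesses.
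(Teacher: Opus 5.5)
The parts other than $\bmukamu\leq\kappa^+$ are correct and match the paper: the lower bounds $\kappa\leq\bmukaal$ and $\kappa^+\leq\bmukabd$, and $\bmukaka\leq\kappa$ via constant functions. One small fix: for $\kappa^+\leq\bmukabd$, take $\abset{\mu_\xi}{\xi\in\kappa}$ continuous with $\mu_0=0$, as the paper does; otherwise your $g$ is undefined on the intervals $[\sup_{\xi<\eta}\mu_\xi,\mu_\eta)$ for limit $\eta$.

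The genuine gap is in $\bmukamu\leq\kappa^+$. You claim $\bkakabd\leq\kappa^+$, i.e.\ that some $\kappa^+$-sequence in $\kaka$ is $\leq_{\rm{bd}}$-unbounded. This is not provable in $\sf{ZFC}$: only $\kappa^+\leq\fr b_\kappa$ holds. By the Cummings--Shelah theorem quoted in the paper, $\fr b_\kappa=2^\kappa>\kappa^+$ is consistent, and then every family of $\kappa^+$ functions in $\kaka$ is $\leq_{\rm{bd}}$-bounded. A $\leq^*$-increasing sequence of length $\kappa^+$ need not be unbounded. The rest of your argument is correct, but it only establishes $\bmukamu\leq\fr b_\kappa$. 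This is structural rather than a slip: each $f_\zeta$ is constant on each of only $\kappa$ pieces, so your family is in effect a family in $\kaka$ and cannot beat $\fr b_\kappa$.

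The missing idea is to use the room that $\mu>\kappa^+$ gives in the domain, together with injections $\sigma_\eta\colon\eta\to\kappa$ for $\eta<\kappa^+$.

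\begin{itemize}
\item The paper views $\mu$ as $\mu$ many blocks of length $\kappa^+$. It sets $f_\zeta(\kappa^+\alpha+\eta)=\sigma_\eta(\zeta)$ if $\zeta<\eta$, and $0$ otherwise.
\item Suppose some $g\in\muka$ had $f_\zeta\leq_\mu g$ for all $\zeta<\kappa^+$. Pigeonhole gives $\xi<\kappa$ and $Z\in[\kappa^+]^{\kappa^+}$ such that every exceptional set $\set{\beta\in\mu}{g(\beta)<f_\zeta(\beta)}$ with $\zeta\in Z$ has size $\leq\mu_\xi$.
\item The union of these sets has size $\leq\kappa^+\cdot\mu_\xi<\mu$. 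The $\eta$ with $\card{Z\cap\eta}=\kappa$ form a final segment of $\kappa^+$, so there are $\mu$ coordinates $\kappa^+\alpha+\eta$ of this kind, and the union misses one of them.
\item At that coordinate, $g$ would have to bound the $\kappa$ many distinct values $\sigma_\eta(\zeta)$ for $\zeta\in Z\cap\eta$. These are cofinal in $\kappa$, which is a contradiction.
\end{itemize}

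This is a pure $\sf{ZFC}$ argument and never involves $\fr b_\kappa$.
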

	\begin{proof}
		For the first pair of equalities, note that the family of constant functions has size $\kappa$ and is unbounded for both the $\leq_\rm{all}$ and the  $\leq_\kappa$  relations.
		
		For the second pair of equalities, let $\abset{\mu_\xi}{\xi\in\kappa}$ be a continuous increasing sequence of cardinals that is cofinal in $\mu$, with $\mu_0=0$. It suffices to prove $\kappa^+\leq\bmukabd$ and $\bmukamu\leq\kappa^+$. For the first inequality, suppose that $\abset{f_\zeta}{\zeta\in\kappa}$ is a sequence in $\muka$. Then we define $g\in\muka$ by $g(\alpha)=\sup_{\zeta\leq\xi}f_\zeta(\alpha)+1$ for each $\alpha\in[\mu_\xi,\mu_{\xi+1})$, and note that $f_\zeta\leq_\rm{bd}g$ for all $\zeta\in\kappa$.
		
		For the second inequality, we fix an injection $\sigma_\eta\colon \eta\to\kappa$ for each $\eta\in\kappa^+$ and define $f_\zeta\in\muka$ for each $\zeta\in\kappa^+$ as follows:
		\[
		\text{For each }\alpha\in\mu\text{ and }\eta\in\kappa^+, \quad f_\zeta(\kappa^+\alpha+\eta):=\begin{cases}
			\sigma_\eta(\zeta) & \text{ if }\zeta<\eta,\\
			0 & \text{ otherwise.}
		\end{cases}
		\]
		Now $\cal F=\set{f_\zeta}{\zeta\in\kappa^+}$ is unbounded for the $\leq_\mu$ relation. Namely, if $g\in\muka$ is a function such that $f_\zeta\leq_\mu g$ for all $\zeta\in\kappa^+$, then by the pigeonhole principle there is some $\xi\in\kappa$ and $Z\subset\kappa^+$ with $\card Z=\kappa^+$ such that $\card[b]{\set{\alpha\in\mu}{g(\alpha)<f_\zeta(\alpha)}}\leq\mu_\xi$ for all $\zeta\in Z$. Since $\kappa^+\cdot \mu_\xi<\mu$, we may fix some $\alpha\in\mu$ and $\eta\in\kappa^+$ with $\card{Z\cap\eta}= \kappa$ such that $f_\zeta(\kappa^+\alpha+\eta)\leq g(\kappa^+\alpha+\eta)$ for all $\zeta\in Z$. Now note that $f_\zeta(\kappa^+\alpha+\eta)=\sigma_\eta(\zeta)$ for all $\zeta\in Z\cap \eta$, and since $\sigma_\eta$ is injective, this implies that $\sup_{\zeta\in Z}f_\zeta(\kappa^+\alpha+\eta)=\kappa$. But this is a contradiction, since $g(\kappa^+\alpha+\eta)<\kappa$.
	\end{proof}
	
	We may summarise the relations between the dominating and bounding numbers on $\kaka$ and $\kamu$ as in \cref{fig:dominating numbers}, displaying the bounding and dominating numbers on $\kaka$ in the bottom row, and the bounding and dominating numbers on $\muka$ in the top row.

	\begin{figure}[t]
		
		\begin{tikzpicture}[xscale=1.6,yscale=1.2]
			\node (k) at (0,0) {$\kappa$};
			\node (k+) at (2.5,0) {\hspace*{4pt}$\kappa^+$};
			\node (bkakaal) at (1,0) {$\bkakaal$};
			\node (bkakabd) at (3.75,0) {$\bkakabd$};
			\node (bkakaka) at (4.75,0) {$\bkakaka$};
			\node (dkakabd) at (6,0) {$\dkakabd$};
			\node (dkakaka) at (7,0) {$\dkakaka$};
			\node (dkakaal) at (8,0) {$\dkakaal$};
			\node (bmukaal) at (0,1) {$\bmukaal$};
			\node (bmukaka) at (1,1) {$\bmukaka$};
			\node (bmukabd) at (2,1) {$\bmukabd$};
			\node (bmukamu) at (3,1) {$\bmukamu$};
			\node (dmukamu) at (5,1) {$\dmukamu$};
			\node (dmukabd) at (6,1) {$\dmukabd$};
			\node (dmukaka) at (7,1) {$\dmukaka$};
			\node (dmukaal) at (8,1) {$\dmukaal$};
			\node (2k) at (9,0) {$2^\kappa$};
			\node (2m) at (9,1) {$2^\mu$};
			\node (m+) at (4,1) {$\mu^+$};
			
			\draw (k) edge[line width=2.5pt] (bkakaal);			
			\draw (k) edge[line width=2.5pt] (bmukaal);
			\draw (bmukaal) edge[line width=2.5pt] (bmukaka);
			\draw (bkakaal) edge[line width=2.5pt] (bmukaka);
			\draw (k+) edge[line width=2.5pt] (bmukabd);
			\draw (k+) edge[line width=2.5pt] (bmukamu);
			\draw (bmukabd) edge[line width=2.5pt] (bmukamu);
			
			\draw (1.5,.4) edge[->] (1.75,.4);
			\draw (k+) edge[->, shorten >=2pt, shorten <=7pt] (bkakabd);
			\draw (bmukamu) edge[->, shorten <=3pt] (m+);
			\draw (m+) edge[->, shorten >=2pt] (dmukamu);
			\draw (bkakabd) edge[line width=2.5pt] (bkakaka);
			
			\draw (bkakaka) edge[->, shorten >=2pt, shorten <=1pt] (dkakabd);
			\draw (dkakabd) edge[line width=2.5pt] (dkakaka);
			\draw (dkakaka) edge[line width=2.5pt] (dkakaal);
			\draw (dmukamu) edge[line width=2.5pt] (dmukabd);
			\draw (dmukabd) edge[line width=2.5pt] (dmukaka);
			\draw (dmukaka) edge[line width=2.5pt] (dmukaal);
			
			\draw (2k) edge[->] (2m);
			\draw (dkakaal) edge[->] (2k);
			\draw (dmukaal) edge[->] (2m);
			\draw (6.5,.35) edge[->] (6.5,.65);
			
			\draw (k) edge[line width=1.5pt,white] (bkakaal);			
			\draw (k) edge[line width=1.5pt,white] (bmukaal);
			\draw (bmukaal) edge[line width=1.5pt,white] (bmukaka);
			\draw (bkakaal) edge[line width=1.5pt,white] (bmukaka);			
			\draw (k+) edge[line width=1.5pt,white] (bmukabd);
			\draw (k+) edge[line width=1.5pt,white] (bmukamu);
			\draw (bmukabd) edge[line width=1.5pt,white] (bmukamu);			
			\draw (bkakabd) edge[line width=1.5pt,white] (bkakaka);
			\draw (dkakabd) edge[line width=1.5pt,white] (dkakaka);
			\draw (dkakaka) edge[line width=1.5pt,white] (dkakaal);
			\draw (dmukamu) edge[line width=1.5pt,white] (dmukabd);
			\draw (dmukabd) edge[line width=1.5pt,white] (dmukaka);
			\draw (dmukaka) edge[line width=1.5pt,white] (dmukaal);

			\draw[rounded corners=5pt] (-.45,-.3) rectangle (1.42,1.3);
			\draw[rounded corners=20pt] (2.5,-.6) -- (1.35,1.3) -- (3.6,1.3) -- cycle;
			\draw[rounded corners=5pt] (3.3,-.3) rectangle (5.12,0.3);
			\draw[rounded corners=5pt] (5.55,-.3) rectangle (8.4,0.3);
			\draw[rounded corners=5pt] (4.61,0.7) rectangle (8.4,1.3);
			
		\end{tikzpicture}
		\caption{Relations between dominating and bounding numbers on $\kaka$ and $\muka$.}\label{fig:dominating numbers}

	\end{figure}
	
	\begin{rmk}
		As shown in \cref{fig:dominating numbers}, of the cardinal characteristic we are interested in, only three remain: $\bkakabd$, $\dkakabd$ and $\dmukabd$. We will abbreviate these as $\fr b_\kappa$, $\fr d_\kappa$ and $\fr d_\mu$, conform with the standard notation for these cardinal characteristics.
	\end{rmk}
	
	Let us discuss the consistency of strict relations allowable in \cref{fig:dominating numbers}. Clearly $\kappa<\kappa^+$ holds in every model, and since $\mu^+\leq\fr d_\mu$ (cf.\ \cref{dmuka cofinality}) we also see that $\kappa^+<\fr d_\mu$ holds in every model and that $2^\kappa<\fr d_\mu$ is consistent. For the bottom row of cardinal characteristics, applying Hechler forcing to the generalised context was done by \citet{CummingsShelah95}:
	
	\begin{thm}[{\cite{CummingsShelah95}}]
		Assume $\sf{GCH}$. If $\beta,\delta,\gamma$ are cardinals such that $\kappa^+\leq\beta\leq\delta\leq\gamma$ and $\beta$ is regular and $\kappa^+\leq\cf(\gamma)$, then there exists a cardinal preserving forcing notion that forces $\fr b_\kappa=\beta$, $\fr d_\kappa=\delta$ and $2^\kappa=\gamma$.
	\end{thm}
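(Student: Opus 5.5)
Under $\mathsf{GCH}$ I would force with a ${<}\kappa$-support iteration of $\kappa$-Hechler forcing $\bb D_\kappa$ (conditions $(s,f)$ with $s\in\fkamu$ restricted to $\fkamu$ replaced by ${}^{<\kappa}\kappa$, $f\in\kaka$, ordered in the usual Hechler way), of length $\gamma\cdot\delta$ (ordinal product), interleaved with ${<}\kappa$-support products of $\kappa$-Cohen forcing to push $2^\kappa$ up to $\gamma$. The standard template is a matrix or a ``book-keeping'' iteration: first add $\gamma$ many $\kappa$-Cohen subsets of $\kappa$ (to get $2^\kappa=\gamma$, using $\cf(\gamma)\geq\kappa^+$ and $\mathsf{GCH}$ to compute $\gamma^\kappa=\gamma$), then perform a ${<}\kappa$-support iteration of length $\delta\cdot\beta$ (ordinal product) where at each stage we force with $\kappa$-Hechler forcing restricted to a ground set of size ${<}\beta$ chosen by book-keeping, plus full $\kappa$-Hechler at stages cofinal of type $\beta$.

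The steps, in order, are as follows. First, check that each iterand is ${<}\kappa$-closed and well-met with canonical lower bounds (needs $\kappa^{<\kappa}=\kappa$, true under $\mathsf{GCH}$), and that it is $\kappa^+$-centred${}_{<\kappa}$, so the ${<}\kappa$-support iteration is ${<}\kappa$-closed and $\kappa^+$-c.c.; this gives cardinal preservation, and a nice-name count gives $2^\kappa=\gamma$ at the end. Second, $\fr b_\kappa\geq\beta$: any family of ${<}\beta$ functions in $\kaka$ appears at an initial stage (by $\kappa^+$-c.c.\ and $\cf$ of the length being $\beta$ along the full-Hechler stages), and the next full Hechler real dominates it. Third, $\fr b_\kappa\leq\beta$ and $\fr d_\kappa\geq\delta$/$\leq\delta$: the $\beta$ cofinally many full Hechler reals along a cofinal $\beta$-sequence form an unbounded family, while the $\delta$ many Hechler reals (arranged so that every small subiteration is captured) form a dominating family, using that any $f\in\kaka$ in the extension is added by a subforcing of size $\leq\kappa$ that book-keeping later dominates. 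Lower bound $\fr d_\kappa\geq\delta$ comes from adding $\delta$ many $\kappa$-Cohen reals cofinally, none of which is dominated by fewer than $\delta$ functions, since any ${<}\delta$ functions live in an initial segment over which a later Cohen real is generic.

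The main obstacle is the second and third steps at once: the partial (restricted) Hechler iterands must be chosen so that they do not destroy the Cohen reals witnessing $\fr d_\kappa\geq\delta$ and do not accidentally bound the witnesses for $\fr b_\kappa\leq\beta$; in the higher setting one must also verify that limit stages of cofinality ${<}\kappa$ cause no new reals problems (the ${<}\kappa$-support iteration of ${<}\kappa$-closed, well-met, $\kappa^+$-linked${}_{<\kappa}$ forcings having $\kappa^+$-c.c.\ requires $\kappa$ regular with $\kappa^{<\kappa}=\kappa$ and the standard $\Delta$-system argument). I would handle this by the matrix-iteration/``small forcing preserves unboundedness'' argument: show that a forcing of size ${<}\beta$ (or a restricted Hechler of a model of size ${<}\beta$) cannot dominate a Cohen-generic over it, which reduces to genericity of the $\kappa$-Cohen reals over the relevant submodel.
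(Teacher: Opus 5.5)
\paragraph{Verdict: genuine gap.} The paper gives no proof here; it only cites Cummings and Shelah. Your proposal has a genuine gap, and it starts with the statement itself. As restated, it drops the hypothesis $\beta\le\cf(\delta)$, which is part of Cummings and Shelah's formulation and cannot be omitted. Indeed $\mathsf{ZFC}$ proves $\fr b_\kappa\le\cf(\fr d_\kappa)$:
\begin{itemize}
\item Suppose $D=\bigcup_{i<\lambda}D_i$ is dominating, with $|D_i|<\fr d_\kappa$ for all $i$ and $\lambda<\fr b_\kappa$.
\item For each $i$, choose $f_i\in\kaka$ that is not $\le^*$-below any member of $D_i$.
\item Let $f$ bound all the $f_i$, and take $g\in D$ with $f\le^* g$.
\item Then $g\in D_i$ for some $i$ and $f_i\le^* g$, a contradiction.
\end{itemize}
So, for example, $\beta=\kappa^{++}$, $\delta=\aleph_{\kappa^+}$, $\gamma=\delta^+$ satisfy all the displayed hypotheses but cannot be realised. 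Your argument never uses any relation between $\beta$ and $\cf(\delta)$, so some step has to fail.

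\paragraph{Where the construction fails.} It fails already in the design. You place full $\kappa$-Hechler iterands at cofinally many stages of an iteration of length $\delta\cdot\beta$, whose cofinality is $\beta>\kappa$. By the $\kappa^+$-c.c.\ you claim, every $f\in\kaka$ of the final model lies in some intermediate model. It is therefore dominated by the next full Hechler function. These $\beta$ functions thus form a dominating family, and your model satisfies $\fr b_\kappa=\fr d_\kappa=\beta$, whatever $\delta$ is. Correspondingly, your argument for $\fr d_\kappa\ge\delta$ is wrong: initial segments capture only families of size ${<}\cf(\delta\cdot\beta)=\beta$, not families of size ${<}\delta$.

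\paragraph{What is needed instead.} The real issue is the shape of the dominating family. You need a $\le^*$-cofinal family $\abset{d_p}{p\in P}$ with $d_p\le^* d_q$ iff $p\le q$. It should be indexed by a ${<}\kappa^+$-directed poset $P$ whose least unbounded subset has size $\beta$ and whose cofinality is $\delta$. Then $(\kaka,\le^*)$ is Tukey equivalent to $P$, which gives $\fr b_\kappa=\beta$ and $\fr d_\kappa=\delta$. Since $\gamma$ may exceed $\delta$, the $d_p$ must also dominate the whole ground model with $2^\kappa=\gamma$, rather than being Hechler over small submodels. This is what a ${<}\kappa$-support version of Hechler's forcing for cofinal families of prescribed order type provides. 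The natural choice is $P=[\delta]^{<\beta}$. Under $\mathsf{GCH}$ its cofinality equals $\delta$ exactly when $\beta\le\cf(\delta)$, and otherwise exceeds $\delta$. That is precisely where the missing hypothesis enters.
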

	
	Finally, the consistency of $\fr d_\mu<2^\mu$ is a partial open problem and has been solved under certain assumptions.
	
	\begin{thm}[{\cite[Claim 1.5]{Shelah19}}]
		If $\nu^\kappa<\mu$ for all $\nu<\mu$, then $\fr d_\mu=2^\mu$.
	\end{thm}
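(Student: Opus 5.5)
The plan is to first replace the bounded ordering by the everywhere ordering, and then to diagonalise against a small dominating family block by block along a cofinal sequence in $\mu$. Since $\dmukamu\leq\dmukabd$ and, by \cref{dom all lambda} with $\lambda=\mu$, $\dmukamu=\dmukaal$, it suffices to show $\dmukaal=2^\mu$. The inequality $\dmukaal\leq 2^\mu$ is trivial, because $\card{\muka}=\kappa^\mu=2^\mu$. So the work is the lower bound: given $D\subset\muka$ with $\card D<2^\mu$, produce $f\in\muka$ with $f\nleq_\rm{all}g$ for every $g\in D$.

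Fix a strictly increasing sequence $\abset{\mu_\xi}{\xi\in\kappa}$ of cardinals cofinal in $\mu$, and split $\mu$ into the blocks $I_\xi=\iv{\mu_\xi}{\mu_{\xi+1}}$. Every $f\in\muka$ is then a $\kappa$-sequence of pieces $f\restr I_\xi\in{}^{I_\xi}\kappa$, and $f\leq_\rm{all}g$ holds iff $f\restr I_\xi\leq_\rm{all}g\restr I_\xi$ for every $\xi$. I would use the hypothesis $\nu^\kappa<\mu$ for $\nu<\mu$ in two places. First, it gives $2^\mu=(2^{<\mu})^\kappa=\prod_{\xi\in\kappa}2^{\mu_\xi}$, so $2^\mu$ is the size of a $\kappa$-product of the block-wise continua. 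Second, it allows, for any $\nu<\mu$, a family of ${\leq}\nu$ many block restrictions to be ``absorbed'' into a single later block. Concretely, inside a block $I_\eta$ with $\mu_\eta\geq\nu^\kappa$, one can code $\nu^\kappa$ many independent coordinates of the form $\kappa\times\nu$. A single function on that block can then escape $\nu$ many given functions simultaneously, by diagonalising over their values at $\nu$ distinct coordinates.

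The main step is a counting and diagonalisation argument in the spirit of Shelah's proof. Suppose $\card D<2^\mu$. I would attempt to show that there is $\xi<\kappa$ such that the restrictions $\set{g\restr I_{\xi'}}{g\in D}$ for $\xi'\geq\xi$ fail to be $\leq_\rm{all}$-cofinal on a tail in a uniform way. The idea is to find, for each $g\in D$, a block $I_{\eta(g)}$ and a coordinate there at which we choose $f$ above $g$. The natural choice for $f$ is to let $f\restr I_\eta$ diagonalise against the set $D_\eta=\set{g\in D}{\eta(g)=\eta}$. This requires $\card{D_\eta}$ to be small relative to $\mu_\eta$, and that is where the hypothesis enters: if all the $D_\eta$ were large, then $\card D$ would have to be at least $\prod_\eta 2^{\mu_\eta}=2^\mu$, via a tree or $\Delta$-style argument on the $\kappa$-sequences of block restrictions.

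I expect this distribution of $D$ into the sets $D_\eta$ of small size to be the main obstacle. The difficulty is that $\card D$ may exceed every $\mu_\eta$, possibly even $\mu^+$, so a naive pigeonhole argument fails. What I would try first is to view $D$ as a set of branches through the tree $\prod_{\xi<\kappa}{}^{I_\xi}\kappa$ and apply the hypothesis $\nu^\kappa<\mu$ to the ${<}\mu$-sized levels of a suitable elementary submodel. The aim is that each node is extended by ${<}\mu$ many members of $D$ below some level, so that the diagonalisation on the next block goes through.
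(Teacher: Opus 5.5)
Your reduction to $\dmukaal$ and the upper bound are fine. The lower bound, however, is the entire content of the statement, and you only sketch it. The mechanism you sketch cannot reach $2^\mu$.

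Suppose each $g\in D$ is assigned a block $I_{\eta(g)}$, and $f\restr I_\eta$ must beat every $g\in D_\eta$ on $I_\eta$. Then $D_\eta\restr I_\eta$ must fail to dominate ${}^{I_\eta}\kappa$, so $\card{D_\eta}<\fr d_{({}^{I_\eta}\kappa,\mathrm{all})}\leq 2^{\card{I_\eta}}$. For the coordinate-by-coordinate diagonalisation you describe, you even need $\card{D_\eta}\leq\card{I_\eta}$. Such an assignment therefore exists only when $\card D\leq\sum_{\eta<\kappa}2^{\card{I_\eta}}\leq 2^{<\mu}$. So this method can never show that a family larger than $2^{<\mu}$ fails to dominate.

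But $2^{<\mu}<2^\mu$ in the central case of the statement. If $\mu$ is a strong limit, then $2^{<\mu}=\mu$, and the method gives nothing beyond the known bound $\fr d_\mu\geq\mu^+$. In particular, your inference that if the $D_\eta$ cannot all be small then $\card D\geq\prod_\eta 2^{\mu_\eta}$ is false. As soon as $\card D>2^{<\mu}$, every assignment has a large $D_\eta$, regardless of how $\card D$ compares with $2^\mu$.

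A symptom is that the hypothesis is never really used. The identity $2^\mu=(2^{<\mu})^\kappa$ holds in $\sf{ZFC}$, and beating $\nu$ given functions on a block of size at least $\nu$ only needs $\nu$ distinct coordinates. Since $\fr d_\mu<2^\mu$ is consistent for uncountable $\kappa$ (as quoted in this section), no argument that avoids the hypothesis can succeed.

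The missing idea is to use the blocks multiplicatively rather than to split $D$ among them. Produce $2^\mu$ functions $f_x$, indexed by $x\in\prod_{\xi<\kappa}\cal P(\mu_\xi)$, where $f_x$ codes $x_\xi$ on pairwise disjoint coordinate sets $C_\xi\subset\mu$. The coding must be such that a single $h\colon C_\xi\to\kappa$ bounds the codes of only few $x_\xi$. Then $\{x : f_x\leq_{\mathrm{all}} g\}$ lies in a product of small sets, and counting gives $2^\mu\leq\card D\cdot\theta$ with $\theta<2^\mu$. The hypothesis $\nu^\kappa<\mu$ is what makes room for a useful coding with $\card{C_\xi}<\mu$.

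For instance, take $\kappa=\omega$.
\begin{itemize}
\item Let $C_\xi$ consist of the pairs $(a,t)$ with $a\in[\mu_\xi]^{\aleph_0}$, enumerated by a fixed $e_a\colon\omega\to a$, and $t\in{}^a2$. Then $\card{C_\xi}=\mu_\xi^{\aleph_0}<\mu$.
\item For $A\subset\mu_\xi$, let $f_A(a,t)$ be the least $i$ with $\chi_A(e_a(i))\neq t(e_a(i))$, or $0$ if there is none.
\item Suppose infinitely many $A$ had $f_A\leq h$. Pick a countable $a$ separating them and an accumulation point $t\in{}^a2$ of their traces on $a$. Then $f_A(a,t)$ is unbounded, a contradiction.
\item So each $h$ bounds only finitely many codes, and each $g$ dominates at most $2^{\aleph_0}<2^\mu$ of the $f_x$. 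Hence $\card D\geq 2^\mu$.
\end{itemize}
For uncountable $\kappa$ the compactness step has to be replaced. When $\mu$ is a strong limit, you can instead combine \cref{dmuka cofinality} with $\mu^\kappa=2^\kappa\cdot\cf(\sqb\mu^\kappa)$ and $2^\kappa<\mu$. These give $\fr d_\mu\geq\cf(\sqb\mu^\kappa)=\mu^\kappa=2^\mu$.
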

	
	\begin{thm}[cf. {\cite[Theorem~5.2]{Hayashi26}}]
		If $\kappa$ is uncountable, consistently $\fr d_\mu<2^\mu$.
	\end{thm}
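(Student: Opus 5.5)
The plan is to start from a model of $\sf{GCH}$ and make $2^\mu$ large by a ccc forcing, while every new function in $\muka$ stays bounded by a ground-model function. This is exactly where the hypothesis that $\kappa$ is uncountable is used. Let $\lambda>\mu^+$ be a cardinal with $\cf(\lambda)>\mu$, and let $\bb P$ add $\lambda$-many Cohen reals, i.e.\ $\bb P=\rm{Fn}_{\omega}(\lambda\times\omega,2)$. Let $G$ be $\bb P$-generic over $\bf V$.

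First, the value of $2^\mu$ in $\bf V[G]$. The forcing $\bb P$ is ccc, so it preserves cardinals and cofinalities. We have $2^{\aleph_0}\geq\lambda$, hence $2^\mu\geq\lambda$. Conversely, counting nice names gives $2^\mu\leq(\lambda^{\aleph_0})^\mu=\lambda^\mu=\lambda$ in $\bf V$, using $\sf{GCH}$ and $\cf(\lambda)>\mu$. So $\bf V[G]\md\ap{2^\mu=\lambda>\mu^+}$.

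The main step is to show that $(\muka)^{\bf V}$ is $\leq_\rm{all}$-dominating in $\bf V[G]$. Let $\dot f$ be a name for an element of $\muka$. For each $\alpha\in\mu$, choose a maximal antichain deciding $\dot f(\alpha)$. By the ccc this antichain is countable, so the set $V_\alpha\subset\kappa$ of possible values of $\dot f(\alpha)$ is a countable set lying in $\bf V$. Since $\kappa$ is regular and uncountable, $V_\alpha$ is bounded in $\kappa$. Define $g(\alpha)=\sup V_\alpha<\kappa$; then $g\in\bf V$ and $\fc\ap{\dot f\leq_\rm{all}\check g}$.

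This shows $\dmukaal^{\bf V[G]}\leq\card{(\muka)^{\bf V}}=(\kappa^\mu)^{\bf V}=\mu^+$, the last equality by $\sf{GCH}$ in $\bf V$. Since $\dmukabd\leq\dmukaal$, and $\fr d_\mu$ equals $\dmukabd$ by the convention following \cref{fig:dominating numbers}, we get $\fr d_\mu\leq\mu^+<\lambda=2^\mu$ in $\bf V[G]$. (In fact $\fr d_\mu=\mu^+$ there, by \cref{dom all lambda}.)

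I expect no serious obstacle. The only delicate point is the domination step, and it is precisely where uncountability of $\kappa$ enters. For $\kappa=\omega$, Cohen reals add unbounded functions, and the hypothesis of \cite[Claim 1.5]{Shelah19} can hold instead. One must also check that the possible-values sets $V_\alpha$ and the map $\alpha\mapsto\sup V_\alpha$ can be computed in $\bf V$, which they can, since the antichains and the forcing relation are defined in the ground model.
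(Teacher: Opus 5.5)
Your proof is correct. Over a $\sf{GCH}$ model, adding $\lambda$ Cohen reals gives $2^\mu=\lambda$, while the ccc together with $\cf(\kappa)>\omega$ keeps $(\muka)^{\bf V}$ pointwise dominating, so $\fr d_\mu\leq\dmukaal=\mu^+<2^\mu$ in $\bf V[G]$ (with $\fr d_\mu=\mu^+$ by \cref{dom all lambda}). The paper gives no argument of its own and only cites \cite[Theorem~5.2]{Hayashi26}, so your argument serves as a complete, self-contained proof. One remark is inaccurate: your closing aside. In the Cohen model with $\kappa=\omega$ one has $2^{\aleph_0}>\mu$, so the hypothesis of \cite[Claim~1.5]{Shelah19} fails there; this plays no role in the proof.
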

	
	We conclude with another result of the first author, relating $\fr d_\mu$ to the cardinal characteristics from \cref{dfn:cf covering numbers}.
	
	\begin{thm}[{\cite[Theorem~4.1]{Hayashi26}}]\label{dmuka cofinality}
		$\cf([\mu]^\kappa)\leq\fr d_\mu$.
	\end{thm}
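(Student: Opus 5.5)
We show that a family $D\subset\muka$ that is dominating for $\leq_\mu$ with $\card D=\fr d_\mu$ yields a $\subset$-cofinal subfamily of $\sqb\mu^\kappa$ of size at most $\card D$. Fix a strictly increasing sequence of regular cardinals $\abset{\mu_\xi}{\xi\in\kappa}$ cofinal in $\mu$, with each $\mu_\xi>\kappa$. Since $\fr d_\mu=\dmukabd$ also equals the dominating number on $\mumu$ with the same relation (only the cofinality of the range matters), we may work in ${}^{\mu}\mu$ and later translate back.

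The key construction encodes each $X\in\sqb\mu^\kappa$ as a function. Fix a partition of $\mu$ into $\mu$-many pieces $\abset{I_\gamma}{\gamma\in\mu}$, each of size $\mu$. For $X\in\sqb\mu^\kappa$, enumerate $X=\set{x_\xi}{\xi\in\kappa}$ and define $f_X\in\muka$ as follows: on $I_\gamma$, the function $f_X$ records, in a robust way, which $\xi$ has $x_\xi$ in a region attached to $\gamma$. The aim is that any $g$ with $f_X\leq_\mu g$ (failure on fewer than $\mu$ points) lets us recover a set $Y_g\supseteq X$ of size $\kappa$ that depends only on $g$. The simplest attempt follows. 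For each $\gamma\in\mu$ and each $\xi\in\kappa$, set $f_X$ constant equal to $\xi+1$ on a block $B_{\gamma,\xi}\subset I_\gamma$ of size $\mu$ whenever $x_\xi=\gamma$, and $0$ elsewhere. If $f_X\leq_\mu g$, then for each $\gamma\in X$ the block $B_{\gamma,\xi}$ has size $\mu$, so $g$ must be $\geq\xi+1>0$ somewhere on it. The decoding set $\set{\gamma}{g\restr I_\gamma\neq0}$ may however be too large.

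To bound the decoded set, I would replace the values by something that forces cofinality behaviour. Use the fact that $g$ is $\kappa$-valued: define $Y_g$ as the set of $\gamma$ for which $g$ exceeds the code of $\gamma$ on $\mu$-many points of $I_\gamma$, and arrange the codes to be large on ``most'' $\gamma$, which is the standard trick of \cite{Hayashi26}. Concretely, I would instead take $f_X(\gamma)$ to be $\min\set{\xi}{x_\xi\geq\gamma\text{ relative to the scale }\mu_\xi}$ and index sets by their traces $X\cap\mu_\xi$. Alternatively, I would reduce to $\cf(\sqb\mu^\kappa)$ via the pcf-style characterisation $\cf(\sqb\mu^\kappa)=\max(\mu^+,\cf(\prod\mu_\xi/J^{bd}))$ for suitable sequences; this holds when $\kappa<\mu$ by Shelah's cov$=$pp results in the relevant cases. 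For the reduction I would use the two bounds below.
\begin{itemize}
\item $\mu^+\leq\fr d_\mu$, by a diagonal argument over $\mu$-many functions using $\kappa$-size blocks: given $\mu$ functions, diagonalise on disjoint blocks of size $\mu$.
\item Every product $\prod_{\xi}\mu_\xi$ of regular cardinals embeds into $\muka$-domination. For $h\in\prod\mu_\xi$, let $f_h(\mu_\xi+\beta)$ encode whether $\beta<h(\xi)$, spread over $\mu$ coordinates in a way that survives modification on ${<}\mu$ points.
\end{itemize}

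The main obstacle is this last embedding, i.e.\ converting domination modulo fewer than $\mu$ points, with values only in $\kappa$, into control of $\kappa$-sized subsets of $\mu$. The difficulty is that $\kappa$-valued functions carry little information per coordinate. I would first try to encode $\gamma\in\mu_{\xi+1}\setminus\mu_\xi$ by the characteristic pattern on a block of size $\mu$. Given a dominating $g$, I would then take $Y_g$ to be the union over $\xi$ of the sets of $\gamma$ whose block sees $g$ above the pattern value, and bound $\card{Y_g}\leq\kappa$ by a counting argument on where $g$ is small.
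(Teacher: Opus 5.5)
\textbf{There is a genuine gap: none of your routes reaches the inequality.} Each sketch stops at the step that carries the content. The decoding $Y_g$ is defined from $g$ alone, can only grow as $g$ grows, and must have size at most $\kappa$ for \emph{every} $g$ in an arbitrary dominating family. Such a family may contain any function you like, since adding one keeps it dominating.

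Both of your concrete codings fail this for the same reason: all their codes lie below one fixed function.
\begin{itemize}
\item The constant-block code is at most $\xi+1$ on each $B_{\gamma,\xi}$, so it lies below the function that equals $\xi+1$ there.
\item The characteristic patterns, and the encoding of ``whether $\beta<h(\xi)$'', are $\{0,1\}$-valued, so they lie below the constant function $1$.
\end{itemize}
That single function would then have to decode to a set containing every $X\in\sqb{\mu}^\kappa$. Hence the closing ``counting argument on where $g$ is small'' has nothing to count.

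The pcf detour does not close the gap either. The identity $\cf(\sqb{\mu}^\kappa)=\max(\mu^+,\cf(\prod_\xi\mu_\xi/J^{\mathrm{bd}}))$ is hedged ``in the relevant cases'' and is not a result you can quote in this form for an arbitrary singular $\mu$ of cofinality $\kappa$. Even granting it, the embedding of $\prod_\xi\mu_\xi$ into domination on $\muka$ is left unconstructed, and you yourself call it the main obstacle.

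\textbf{The missing idea is a property of the codes, not a cleverer decoding.}

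\emph{Use everywhere domination.} By \cref{dom all lambda} with $\lambda=\mu$, together with $\dmukamu\leq\dmukabd\leq\dmukaal$, we have $\fr d_\mu=\dmukaal$. So robustness against $\leq_\mu$ is unnecessary.

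\emph{Require that no $\kappa$ codes have a common bound.} Your shape $Y_g=\{\alpha:f_\alpha\leq_{\mathrm{all}}g\}$ is right, provided the codes $f_\alpha\in\muka$ ($\alpha<\mu$) satisfy: for every $B\in\sqb{\mu}^\kappa$ some coordinate $\gamma$ has $\{f_\alpha(\gamma):\alpha\in B\}$ unbounded in $\kappa$. Then $A_g:=\{\alpha<\mu:f_\alpha\leq_{\mathrm{all}}g\}$ has size ${<}\kappa$ for every $g$, which is exactly the bound you lack.

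\emph{Such codes exist in $\mathsf{ZFC}$.} By induction on cardinals $\kappa\leq\lambda\leq\mu$, choose at most $\lambda$ functions $c\colon\lambda\to\kappa$ such that every $B\in\sqb{\lambda}^\kappa$ is unbounded under one of them.
\begin{itemize}
\item For $\lambda=\kappa$, take the identity.
\item For $\lambda>\kappa$, transport the families for $\card{\delta}$ onto each $\delta<\lambda$ via bijections. This handles every $B$ with a bounded subset of size $\kappa$. Every $B$ has one unless $\cf(\lambda)=\kappa$ and $B$ is cofinal; when $\cf(\lambda)<\kappa$ this uses regularity of $\kappa$.
\item When $\cf(\lambda)=\kappa$, also add $\alpha\mapsto\min\{\xi:\alpha<\lambda_\xi\}$ for a cofinal sequence $\langle\lambda_\xi:\xi<\kappa\rangle$; it is unbounded on every cofinal $B$.
\end{itemize}
Enumerate the family for $\mu$ as $\langle c_\gamma:\gamma<\mu\rangle$ and set $f_\alpha(\gamma)=c_\gamma(\alpha)$.

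\emph{Spread $X$ over $\kappa$ copies.} A pointwise supremum of $\kappa$ codes need not be $\kappa$-valued. So split $\mu$ into pieces $M_i$ ($i<\kappa$) of size $\mu$ with bijections $\theta_i\colon\mu\to M_i$. Code $X=\{x_i:i<\kappa\}$ by the function equal to $f_{x_i}\circ\theta_i^{-1}$ on $M_i$, and decode $Y_g=\bigcup_{i<\kappa}A_{g\circ\theta_i}$. This set has size at most $\kappa$ and contains $X$ whenever $g$ dominates the code of $X$ everywhere. Hence a dominating family of size $\fr d_\mu$ yields a cofinal subfamily of $\sqb{\mu}^\kappa$ of the same size.
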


	\section{Cofinality numbers}\label{sec:Cofinality}
	
	In the context of higher Baire spaces for regular cardinals (i.e., $\kaka$) it was shown in \cite{Brendle22} that $2^\kappa<\cof(\Mkakaka)$ is consistent, and in fact that $2^{<\kappa}<\cof(\Mkakaka)$ holds for every regular $\kappa$. The proof relies on a lower bound derived from a specific dominating number, and on closer examination it becomes clear that antichains of size $2^{<\kappa}$ in the topology on $\kaka$ form the key to this lower bound. The method of proof can be generalised to give lower bounds for $\cof(\MXtau)$ for our spaces $(X,\tau)$ with $\tau$ either the ${<}\mu$-box topology or the bounded topology. We will use the notation from \cref{sec:Dominating} to describe the relevant dominating numbers.

	\begin{thm}\label{dom < cof bd top}
		\leavevmode
		\begin{enumerate}
			\item
			$\dnukaal\leq \cof(\Mkamuka)$ for $\nu=\mu^{<\kappa}$.
			\item
			$\dnukaal\leq \cof(\Mmumubd)$ for $\nu=\mu^{<\mu}$.
		\end{enumerate}
	\end{thm}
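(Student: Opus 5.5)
Following the approach of \citet[Proposition~12 and its proof]{Brendle22}, I would give a Tukey-style reduction from $\dnukaal$ to the cofinality of the $\kappa$-meagre ideal. In both cases the key input is an open partition of size $\nu$: in \cref{lem:weight sized open partition} we found open partitions of $(\kamu,\rm{bd})$ of size $\mu^{<\kappa}$ and of $(\mumu,\rm{bd})$ of size $\mu^{<\mu}$. Strong homogeneity then lets us refine every basic open set $[s]$ into $\nu$ many pairwise disjoint basic open subsets. So we fix, for every $s$ in the basis $\cal S$ (where $\cal S=\fkamu$, resp.\ $\fmumu$, so $|\cal S|=\nu$), an enumeration $\abset{[t^s_\eta]}{\eta\in\nu}$ of a partition of $[s]$ into basic open sets. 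We also fix an enumeration $\abset{s_\alpha}{\alpha\in\nu}$ of $\cal S$.

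To each $h\in\nuka$ and each $\xi\in\kappa$ I would associate the open dense set
\[D^h_\xi=\Cup\set{[t^{s_\alpha}_\eta]}{\alpha\in\nu,\ \eta\geq h(\alpha)\text{ suitably coded with }\xi},\]
and set $M_h=\Cup_{\xi\in\kappa}(X\setminus D^h_\xi)\in\MXbd$. A cleaner choice is to reindex the partition of each $[s]$ as $\nu\times\kappa$, i.e.\ pieces $[t^s_{\eta,\zeta}]$. We then put $M_h:=\Cup_{\xi}N^h_\xi$, where $N^h_\xi$ is the closed nowhere dense complement of $\Cup_\alpha\Cup_{\zeta>\xi,\ \eta}\dots$. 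Any fixed version will do, as long as it satisfies the key monotonicity property: $h\leq_{\rm{all}}h'$ implies $M_h\subseteq M_{h'}$, and $M_h$ really is $\kappa$-meagre. Note that the $D$'s must be dense, so for each $s$ we keep $\kappa$-many pieces indexed by the values in $\kappa$ and discard those below $h(\alpha)$.

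The main step is the converse direction. Given a cofinal family $\cal F\subseteq\MXbd$ with $|\cal F|<\dnukaal$, each $M\in\cal F$ is covered by $\kappa$ nowhere dense sets $N^M_\xi$ with open dense complements. Using the ${<}\kappa$-closure of the bounded topology, I would define $g_M\in\nuka$ by letting $g_M(\alpha)$ be the least $\zeta$ such that the piece $[t^{s_\alpha}_{\zeta}]$ meets $X\setminus N^M_\xi$ in a controlled way. Alternatively, $g_M(\alpha)$ can be a bound on the $\xi$ at which the open dense sets start to avoid the piece. Since $|\cal F|<\dnukaal$, there is $h\in\nuka$ not $\leq_{\rm{all}}$-dominated by any $g_M$, so for every $M$ there is an $\alpha$ with $h(\alpha)>g_M(\alpha)$. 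Cofinality gives some $M\in\cal F$ with $M_h\subseteq M$. I then aim to build, by a recursion of length $\kappa$ inside $[s_\alpha]$, a point of $M_h\setminus M$, which contradicts $M_h\subseteq M$. The recursion uses the ${<}\kappa$-closure of the bounded topology: at limit stages of cofinality $<\kappa$ the increasing union of bounded partial functions is still bounded, since $\cf(\kappa)=\kappa$ and, for $\kamu$ or $\mumu$, the domains are bounded below $\kappa$, resp.\ below $\mu$ along a $\kappa$-sequence. At each stage the recursion dodges $N^M_\xi$ while staying inside the pieces that define $M_h$.

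I expect the main obstacle to be calibrating the definition of $M_h$ and $g_M$ so that ``$h(\alpha)>g_M(\alpha)$ for some $\alpha$'' really yields a point of $M_h$ outside $M$. This is exactly the combinatorial heart of Brendle's argument for $\kaka$ with antichains of size $2^{<\kappa}$. Here the only change should be replacing those antichains by the size-$\nu$ partitions from \cref{lem:weight sized open partition}, so I would transcribe Brendle's definitions with the index set $\nu$ in place of $2^{<\kappa}$.
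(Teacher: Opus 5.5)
Your proposal has a genuine gap. It is not only that $M_h$ and $g_M$ are left unspecified (``suitably coded'', ``in a controlled way''): the reduction you set up cannot exist for any choice of them. The step you defer is this: for $M\in\cal F$, if $h(\alpha)>g_M(\alpha)$ for some $\alpha$, then $M_h\nsubset M$. Equivalently, $M_h\subseteq M\imp h\leq_{\rm{all}}g_M$.

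Here is why it fails. Let $c_\zeta\in\nuka$ be the constant function with value $\zeta$, and put $M^*=\Cup_{\zeta<\kappa}M_{c_\zeta}$. This set is $\kappa$-meagre because $\MXbd$ is ${\leq}\kappa$-complete. Since $\cal F$ is cofinal, some $M\in\cal F$ contains $M^*$. Take $\zeta=g_M(0)+1<\kappa$: then $c_\zeta\nleq_{\rm{all}}g_M$, although $M_{c_\zeta}\subseteq M$.

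Put differently, a Tukey connection from $(\nuka,\leq_{\rm{all}})$ to $(\MXbd,\subseteq)$ would give $\add(\MXbd)\leq\spshh{\fr b}{\nuka}{\rm{all}}=\kappa$, contradicting \cref{additivity lower bound}. The reason is already visible in your recursion. To reach a point outside $M=\Cup_{\xi<\kappa}N^M_\xi$ you must win an escape at each of $\kappa$ stages, at nodes that depend on earlier choices. One coordinate where $h$ beats $g_M$ gives you only a single win.

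The missing idea, which is what the paper does, is to attach $\nu$ many functions to each $A\in\cal F$, one per node, and to let $g$ measure room for escaping rather than select pieces. Concretely:
\begin{itemize}
\item Write $A=\Cup_{\alpha<\kappa}A_\alpha$ with the $A_\alpha$ increasing and nowhere dense.
\item Fix $h_A$ with $A_{\dom(s)}\cap[s\append h_A(s)]=\emp$; in case (2), use $A_\alpha$ whenever $\dom(s)\in[\mu_\alpha,\mu_{\alpha+1})$.
\item Let $\abset{\sigma_\gamma}{\gamma\in\nu}$ enumerate the open partition of size $\nu$. For each node $s$ let $f^A_s(\gamma)=\dom(h_A(s\append\sigma_\gamma))$; in case (2), take the $\alpha$ with $\dom(h_A(s\append\sigma_\gamma))\in[\mu_\alpha,\mu_{\alpha+1})$.
\item Since $\nu^+\leq\dnukaal$ by \cref{dom all lambda}, the family of all $f^A_s$ (for $A\in\cal F$ and $s$ a node) still has size ${<}\dnukaal$. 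So there is $g\in\nuka$ dominated by none of them: at every node $s$ some $\gamma$ has $f^A_s(\gamma)<g(\gamma)$.
\item The witness is the single set $C_g$ of branches of the tree whose successor steps are $s\append\sigma_\gamma\append r\append\langle0\rangle$, with $r$ arbitrary of length $g(\gamma)$. In case (2), $r$ has length $\mu_{g(\gamma)}$ and is followed by $\mu_\alpha$ zeros.
\end{itemize}
The forced zeros make $C_g$ nowhere dense. The free block $r$ is exactly where a branch absorbs $h_A(s\append\sigma_\gamma)$. So a $\kappa$-length recursion produces a branch avoiding every $A_\alpha$, i.e.\ $C_g\nsubset A$ for all $A\in\cal F$.

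This is a reduction from ``being dominated by some member of a family of size $\nu$''. That relation has unbounding number at least $\nu^+$, so it is compatible with $\add\geq\kappa^+$.
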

	
	\begin{proof}
		
		The proof structure is the same in both case, so we will discuss the general structure first. For both $(\kamu,\rm{bd})$ and $(\mumu,\rm{bd})$, note that $\nu$ is the weight of the space. As we saw in \cref{lem:weight sized open partition}, these spaces have an open partition $\cal S$ of size $\nu$. We will fix an enumeration $\abset{\sigma_\gamma}{\gamma\in\nu}$ of $\cal S$.
		
		Let $X\in\ac{\kamu,\mumu}$ be the relevant space and suppose $\cal F\subset\MXbd$ has cardinality $\card{\cal F}<\dnukaal$, then we we will define for each $A\in\cal F$ a set of functions $D_A\subset\nuka$ with $\card{D_A}\leq\nu$. Since $\nu<\dnukaal$, and therefore since $\card{\Cup_{A\in\cal F}D_A}\leq\card{\cal F}\cdot \nu<\dnukaal$, we may find some $g\in\nuka$ such that $g\nleq_\rm{all} f$ for any $f\in\Cup_{A\in\cal F}D_A$. We then use $g$ to define a set $C\subset X$ that is nowhere dense in $(X,\rm{bd})$ such that $C\nsubset A$ for all $A\in\cal F$, thereby proving that $\cal F$ does not witness $\cof(\MXbd)$. The proof thus consists of three parts: 
		\begin{enumerate}[label=(\Roman*)]
			\item Construct $D_A$ from $\cal F$,
			\item Construct $C$ from $g$,
			\item Show that $C\nsubset A$ for all $A\in\cal F$.
		\end{enumerate}
		
		Let us now look at each of the spaces in detail.
		
		\begin{enumerate}[label={\textit{(\arabic*)}},wide]
			\item We work in the space $(\kamu,\rm{bd})$, where $\fkamu$ generates the base and $\nu=\mu^{<\kappa}$. We will go through the parts (I) to (III).
			
			(I)\quad
			Let $A\in\cal F$, then we can find a $\subset$-increasing sequence $\abset{A_\alpha}{\alpha\in\kappa}$ of nowhere dense sets such that $A=\Cup_{\alpha\in\kappa}A_\alpha$. We define a function $h_A\colon\fkamu\to\fkamu$ such that if $s\in{}^\alpha\mu$ for $\alpha<\kappa$, then $A_\alpha\cap [s\append h_A(s)]=\emp$. Given $s\in\fkamu$, define $f^A_s\in\nuka$ to map $\gamma\mapsto\dom(h_A(s\append\sigma_\gamma))$ for each $\gamma\in\nu$.
			We let $D_A=\set{f^A_s}{s\in\fkamu}$, then indeed $\card{D_A}\leq\nu$. See also \cref{figure}.

			\begin{figure}[h]\small
				\begin{tikzpicture}[scale=.35]
					
					\draw (-6,12) -- (0,0) -- (6,12);
					\draw (-3,6) edge[dashed] (3,6);
					\draw (-4.5,9) edge[dashed] (4.5,9);
					\draw (-6,12) edge (6,12);
					
					\draw (0,0) edge[line width=1pt, in=-110, out=70, niceblue] (0,6);
					\draw (0,6) edge[line width=1pt, in=-90, out=70, looseness=1.5, nicepink] (0,9);
					
					\draw (-6,12) edge[line width=2pt,nicered] (-2,12);
					\draw (1.5,12) edge[line width=2pt,nicered] (5,12);
					\draw[nicered,decorate, decoration = {brace,raise=3pt}] (-6,12) -- (-2,12);
					\draw[nicered,decorate, decoration = {brace,raise=3pt,aspect=.713}] (1.5,12) -- (5,12);
					\draw (4,12.6) edge[nicered,->] (4,13.8);
					\path[draw,nicered,->,rounded corners=5pt] (-4,12.6) -- (-4,14.5) -- (3,14.5);
					
					\draw (-.75,12) edge[line width=2pt,nicegreen] (.75,12);
					\draw[nicegreen,decorate, decoration = {brace,raise=3pt}] (-.75,12) -- (.75,12);
					
					\draw[nicegreen] (-.75,12) -- (0,9) -- (.75,12);
					
					\node[nicered] at (4,14.5) {$A_\alpha$};
					\node[nicegreen] at (0,13.3) {$[s\append h_A(s)]$};
					
					\node[right] at (3,6) {$\alpha=\dom(s)$};
					\node[right] at (4.5,9) {$\dom(s\append h_A(s))$};
					\node[right] at (6,12) {$\kappa$};
					
					\node[left,niceblue] at (-.1,4) {$s$};
					\node[left,nicepink] at (.2,7.5) {$h_A(s)$};

				\end{tikzpicture}\ 
				\begin{tikzpicture}[scale=.35]
					
					\draw (-3.5,7) -- (0,0) -- (4,8);
					\draw (4.75,9.5) -- (6,12);
					\draw (-6,12) -- (-4.25,8.5);
					\draw (-2,4) edge[dashed] (2,4);
					\draw (-3,6) edge[dashed] (-.5,6);
					\draw (-4.5,9) edge[dashed] (-1.5,9);
					\draw (.5,7) edge[dashed] (3.5,7);
					\draw (.5,10) edge[dashed] (5,10);
					\draw (-6,12) edge (6,12);
					
					\draw (0,0) edge[line width=1pt, in=-110, out=70, niceblue] (0,4);
					\draw (0,4) edge[line width=1pt, in=-90, out=70, looseness=1.5, nicered] (-1.5,6);
					\draw (-1.5,6) edge[line width=1pt, in=-60, out=90, looseness=1.5, nicepink] (-2.5,9);
					\draw (0,4) edge[line width=1pt, in=-90, out=70, looseness=1.5, nicegreen] (1.5,7);
					\draw (1.5,7) edge[line width=1pt, in=-110, out=90, looseness=1.5, nicepink] (1.5,10);

					\node[right] at (2,4) {$\dom(s)$};
					\node[right] at (3.5,7) {$\dom(s\append \sigma_{\gamma})$};
					\node[right] at (5,10) {$\dom(s\append \sigma_{\gamma}\append h_A(s\append\sigma_{\gamma}))$};
					\draw (10,7) edge[->,in=-60, out=40,nicepink] (10.5,9.25);
					\node[right,nicepink] at (10.5,8) {$+f^A_s(\gamma)$};
					\node[right] at (6,12) {$\kappa$};
					
					\node[right,niceblue] at (-.1,3) {$s$};
					\node[left,nicered] at (-.5,4.75) {$\sigma_{\gamma'}$};
					\node[right,nicegreen] at (.55,5) {$\sigma_{\gamma}$};
					\node[right,nicepink] at (1.25,8.75) {$h_A(s\append \sigma_{\gamma})$};
					
					\node[left,nicepink] at (-1.75,7.75) {$h_A(s\append \sigma_{\gamma'})$};

				\end{tikzpicture}\caption{Schematic overview of the definitions of $h_A$ and $f^A_s$ in the proof of \cref{dom < cof bd top}\,\textit{(1)}.}\label{figure}
			\end{figure}
			
			(II)\quad Let $g\in\nuka$ be such that $g\nleq_\rm{all} f$ for all $f\in\Cup_{A\in\cal F}D_A$. We first build sets $T_\alpha$ by induction on $\alpha\in\kappa$, which will serve as layers to a tree $T$ whose branches form the nowhere dense set $C$. Let $T_0=\ac\emp$ and given $T_\alpha$, let $t\in T_{\alpha+1}$ if and only if there is $s\in T_\alpha$, $\gamma\in\nu$ and $r\in{}^{g(\gamma)}\mu$ such that $t=s\append\sigma_\gamma\append r\append\ab0$. If $\alpha$ is limit, we let $T_\alpha$ consist of all $t\in\fkamu$ for which there is a sequence $\abset{s_\xi}{ \xi<\alpha}$ with $s_\xi\in T_\xi$ for each $\xi<\alpha$ such that $t=\Cup_{\xi\in\alpha}s_\xi$. Define the tree $T\subset\fkamu$ as the downward closure of  $\Cup_{\alpha\in\kappa}T_\alpha$, and let $C$ be the set of branches through $T$. 
			
			To see that $C$ is nowhere dense, let $s\in T$, then there are $s'\supset s$ and $\alpha\in\kappa$ such that $s'\append\ab 0\in T_{\alpha+1}$. Since every $t\in T_{\alpha+1}$ is a sequence ending in $0$, we find $s\subset s'\append \ab 1\notin T$.
			
			(III)\quad Fix some $A\in\cal F$, then we will find $k\in C\setminus A$.			
			Let $k_0=\emp\in T_0$. Given $k_\alpha\in T_\alpha\subset \fkamu$, let us write $k_\alpha=s$, then we can find $\gamma\in\nu$ such that $\dom(h_A(s\append \sigma_\gamma))= f_s^A(\gamma)<g(\gamma)$, thus there is $t\in T_{\alpha+1}$ such that $s\append\sigma_\gamma\append h_A(s\append\sigma_\gamma)\subset t$. Now note that $A_{\dom(s\append\sigma_\gamma)}\cap [s\append\sigma_\gamma\append h_A(s\append\sigma_\gamma)]=\emp$. Since $\alpha\leq\dom(s\append\sigma_\gamma)$, we also see $A_\alpha\cap [t]=\emp$. We define $k_{\alpha+1}=t$. Finally, if $\alpha$ is limit we let $k_{\alpha}=\Cup_{\xi\in\alpha}k_\xi$, and we define $k=\Cup_{\alpha\in\kappa}k_\alpha\in C$. It follows that $k\notin A_\alpha$ for all $\alpha$, so $k\notin A$.

			\item We work in the space $(\mumu,\rm{bd})$, where $\fmumu$ generates the base and $\nu=\mu^{<\mu}$. The main difference with \textit{(1)} is that functions in our space have domain $\mu$ instead of $\kappa$. We therefore use a strictly increasing continuous sequence $\abset{\mu_\alpha}{\alpha\in\kappa}$ cofinal in $\mu$ to replace $\abset{\alpha}{\alpha\in\kappa}$ at some parts in the proof. Let us write $I_\alpha=[\mu_\alpha,\mu_{\alpha+1})$ for each $\alpha\in\kappa$ and assume that $\mu_0=0$. Below we show how to translate \textit{(1)} (I--III) to the context of $(\mumu,\rm{bd})$.
			
			(I)\quad 
			The definition of $h_A\colon \fmumu\to\fmumu$ is altered so that for any $s\in{}^{\xi}\mu$ with $\xi\in I_\alpha$ we have $A_\alpha\cap\sqb{s\append h_A(s)}=\emp$. Define $f^A_s\in\nuka$ to map $\gamma\mapsto\sup\set{\alpha\in\kappa}{\mu_\alpha\leq\dom(h_A(s\append\sigma_\gamma))}$. 
			
			(II)\quad The tree $T$ that is defined from $g$ is defined exactly the same in the base case and the limit case, but in the successor stages we let $t\in T_{\alpha+1}$ if and only if there are $s\in T_\alpha$, $\gamma\in \nu$, $r\in{}^{\mu_{g(\gamma)}}\mu$ and $z\in{}^{\mu_\alpha}\mu$ such that $z\colon \xi\mapsto 0$ for all $\xi\in\mu_\alpha$ and $t=s\append\sigma_\gamma\append r\append z$. The purpose of $z$ is two-fold: by appending zeroes, we make sure that $C$ is nowhere dense, and by letting $\dom(z)=\mu_\alpha$, we make sure that the length of the sequences in $T_\alpha$ approaches $\mu$ as $\alpha$ approaches $\kappa$. The set $C\subset\mumu$ will be defined as the set of branches through $T$.
			
			(III)\quad In the construction of $k\in C\setminus A$, suppose that $k_\alpha\in T_\alpha\subset\fmumu$ and let us write $k_\alpha=s$. Then we can find $\gamma\in\nu$ such that $f_s^A(\gamma)< g(\gamma)$, and thus $\dom(h_A(s\append \sigma_\gamma))<\mu_{f_s^A(\gamma)+1}\leq\mu_{g(\gamma)}$, which proves that there is $t\in T_{\alpha+1}$ with $s\append\sigma_\gamma\append h_A(s\append\sigma_\gamma)\subset t$.\qedhere
		\end{enumerate}
	\end{proof}
	
	Before generalising the above proof to the spaces with the ${<}\mu$-box topology, we introduce two concepts. 
	Firstly, since the elements of $ \rm{Fn}_\mu(\mu,\mu)$ do not necessarily have ordinals as domain, we need to define \emph{concatenation} for two functions $s,t\in \rm{Fn}_\mu(\mu,\lambda)$, which we denote  by $s\altappend t$. Given $s,t\in \rm{Fn}_\mu(\mu,\lambda)$, we enumerate $\mu\setminus\dom(s)$ increasingly as $\abset{\beta_\xi}{\xi\in\mu}$, then we define $s\altappend t\in \rm{Fn}_\mu(\mu,\lambda)$ to be the function with domain $\dom(s)\cup \set{\beta_\xi}{\xi\in\dom(t)}$ sending $\xi\mapsto s(\xi)$ if $\xi\in\dom(s)$ and $\beta_\xi\mapsto t(\xi)$ if $\xi\in\dom(t)$.
	
	Secondly, let $\nu$ be a cardinal, then we define $\fr q_\mu(\nu)$ as the least size of a set $Q\subset{}^\nu(\sqb\mu^{<\mu})$ such that for any $g\colon \nu\to\sqb\mu^{<\mu}$ there exists $f\in Q$ such that for all $\gamma\in\nu$ we have $f(\gamma)\nsubset g(\gamma)$. 
	
	\begin{prp}
		$\nu^+\leq\fr q_\mu(\nu)$.\qed
	\end{prp}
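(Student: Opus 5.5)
We argue by diagonalisation: given any family $Q\subset{}^\nu(\sqb\mu^{<\mu})$ with $\card Q\leq\nu$, we build a single $g\colon\nu\to\sqb\mu^{<\mu}$ such that every $f\in Q$ has some $\gamma\in\nu$ with $f(\gamma)\subset g(\gamma)$. Then no family of size $\leq\nu$ witnesses the definition of $\fr q_\mu(\nu)$, which gives $\nu^+\leq\fr q_\mu(\nu)$. We may assume $Q\neq\emp$, since $\fr q_\mu(\nu)\geq 1$ trivially.

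First enumerate $Q$ as $\abset{f_\gamma}{\gamma\in\nu}$, repeating elements if $\card Q<\nu$. Next define $g(\gamma):=f_\gamma(\gamma)$ for each $\gamma\in\nu$. Since $f_\gamma(\gamma)\in\sqb\mu^{<\mu}$, the function $g$ indeed maps $\nu$ into $\sqb\mu^{<\mu}$. For every $f\in Q$ there is some $\gamma$ with $f=f_\gamma$, and then $f(\gamma)=g(\gamma)$, so in particular $f(\gamma)\subset g(\gamma)$. Hence $f$ fails the requirement ``$f(\gamma)\nsubset g(\gamma)$ for all $\gamma\in\nu$''. So $Q$ is not a witness, and every witnessing family has size at least $\nu^+$.

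There is no real obstacle beyond choosing the right diagonal. The one point to check is that the enumeration uses the index set $\nu$ itself, so that each $f\in Q$ is matched with a coordinate of $g$. This is exactly where $\card Q\leq\nu$ is used: a family of size $\nu^+$ could not be listed along the coordinates $\gamma\in\nu$ in this way.
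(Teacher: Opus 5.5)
Your diagonal argument is correct: enumerating a family of size at most $\nu$ as $\langle f_\gamma : \gamma\in\nu\rangle$ and setting $g(\gamma)=f_\gamma(\gamma)$ defeats every member of the family. The paper states the proposition with \qed and gives no proof, and this diagonalisation is exactly the routine argument it leaves to the reader.
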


	\begin{thm}\label{dom < cof mu top}
		\leavevmode
		\begin{enumerate}
			\item
			$\fr q_\mu(\nu)\leq \cof(\Mmumumu)$ for $\nu=\mu^{<\mu}$.
			\item
			$\fr q_\mu(\nu)\leq \cof(\Mmucsmu)$ for $\nu=\kappa^{<\mu}$.
		\end{enumerate}
	\end{thm}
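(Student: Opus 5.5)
We follow the three-part scheme (I)--(III) from the proof of \cref{dom < cof bd top}, replacing the dominating number $\dnukaal$ by $\fr q_\mu(\nu)$ and the ordinal lengths $f^A_s(\gamma)$ by sets in $\sqb\mu^{<\mu}$. In both cases $\nu$ is the relevant weight: $\mu^{<\mu}$ for $(\mumu,\mu)$, and $\kappa^{<\mu}=2^{<\mu}$ counts the cones of bounded support in $\mucs$. We fix an antichain $\abset{\sigma_\gamma}{\gamma\in\nu}$ of basic open sets (partial functions in $\rm{Fn}_\mu(\mu,\cdot)$, say with bounded domains) of size $\nu$. It should be chosen so that every $s\in\rm{Fn}_\mu(\mu,\cdot)$ relevant to the construction has a fixed bound $\lambda<\mu$ on domain size at stage $\alpha$. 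We also fix an increasing continuous sequence $\abset{\mu_\alpha}{\alpha\in\kappa}$ cofinal in $\mu$.

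Suppose $\cal F\subset\MXtau$ has $\card{\cal F}<\fr q_\mu(\nu)$. Write each $A\in\cal F$ as an increasing union $\Cup_{\alpha\in\kappa}A_\alpha$ of nowhere dense sets.

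\textbf{Step (I).} Choose $h_A\colon\rm{Fn}_\mu(\mu,\cdot)\to\rm{Fn}_\mu(\mu,\cdot)$ such that $A_\alpha\cap\sqb{s\altappend h_A(s)}=\emp$ whenever $\card{\dom(s)}<\mu_{\alpha+1}$, say. For each $s$, define $f^A_s\colon\nu\to\sqb\mu^{<\mu}$ by letting $f^A_s(\gamma)$ be the set of coordinates (in $\mu$) actually used by $s\altappend\sigma_\gamma\altappend h_A(s\altappend\sigma_\gamma)$. Put $D_A=\set{f^A_s}{s}$. This has size at most $\nu$, since there are $\nu$-many $s$. Hence $\Cup_{A}D_A$ has size ${<}\fr q_\mu(\nu)$ by the proposition $\nu^+\leq\fr q_\mu(\nu)$.

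\textbf{Step (II).} By the definition of $\fr q_\mu(\nu)$ (applied dually: the family $\Cup_A D_A$ is too small), we get $g\colon\nu\to\sqb\mu^{<\mu}$ such that for every $f\in\Cup_AD_A$ some $\gamma$ satisfies $f(\gamma)\subset g(\gamma)$. We then build a tree of conditions $T_\alpha$, $\alpha<\kappa$. Given $s\in T_\alpha$, the successors are $s\altappend\sigma_\gamma\altappend r\altappend z$, where:
\begin{enumerate}
	\item $r$ is an arbitrary function on the coordinates of $g(\gamma)$ not yet used;
	\item $z$ is identically $0$ on a fresh block of size $\mu_\alpha$.
\end{enumerate}
Limits are unions. $C$ is the set of $x$ in the space that extend some branch $\Cup_\alpha k_\alpha$. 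Nowhere denseness of $C$ follows as before: any basic cone either misses $T$ or can be extended by flipping a coordinate in the $z$-block to $1$. For this, the $z$-blocks must be placed so that any condition of size ${<}\mu$ leaves some block untouched. Using $\mu_\alpha\to\mu$, this is where the ${<}\mu$-box topology, rather than the bounded topology, needs care.

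\textbf{Step (III).} Given $A$, build $k_\alpha\in T_\alpha$ recursively. At $s=k_\alpha$, pick $\gamma$ with $f^A_s(\gamma)\subset g(\gamma)$. Then $h_A(s\altappend\sigma_\gamma)$ fits inside the $r$-part, so some successor $t$ extends $s\altappend\sigma_\gamma\altappend h_A(s\altappend\sigma_\gamma)$, giving $[t]\cap A_\alpha=\emp$. Any element of $C$ through the resulting branch avoids $A$, so $C\nsubset A$ and $\cal F$ is not cofinal.

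For $(\mucs,\mu)$, the same argument works with range $2$, where cones number $2^{<\mu}=\kappa^{<\mu}$.

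\textbf{Main obstacle.} The delicate step is making the $\altappend$-coordinates meaningful: $f^A_s(\gamma)$ must be expressed as a subset of a coordinate system independent of $s$, so that "$f(\gamma)\subset g(\gamma)$" really guarantees room in the tree. We would handle this by indexing coordinates relative to $\mu\setminus\dom(s\altappend\sigma_\gamma)$, as the $\altappend$ definition does.
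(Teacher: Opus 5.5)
Your proposal is the paper's argument almost step for step: the antichain $\abset{\sigma_\gamma}{\gamma\in\nu}$, the maps $h_A$ and $f^A_s$ with values in $[\mu]^{<\mu}$ taken relative to $\mu\setminus\dom(s\altappend\sigma_\gamma)$, the tree-like $T_\alpha$ with $r$ on $g(\gamma)$ followed by a zero block of length $\mu_\alpha$, and the branch $k\in C\setminus A$. Part (1) goes through. The gap is the count ``there are $\nu$-many $s$'' in Step (I) for part (2).

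The map $s\mapsto f^A_s$ must be defined at every $s$ that can occur as a node $k_\alpha$ for \emph{some} $g$. Since $g(\gamma)$ is an arbitrary element of $[\mu]^{<\mu}$, these nodes have arbitrary, typically unbounded, supports. So $D_A$ is indexed by essentially all of $\rm{Fn}_\mu(\mu,2)$, which has size $\mu^{<\mu}$. The number $2^{<\mu}$ of bounded-support cones only bounds the antichain of $\sigma_\gamma$'s, not the $s$'s. When $\mu$ is a strong limit, $\nu=\kappa^{<\mu}=\mu$ while $\mu^{<\mu}=\mu^\kappa=2^\mu$. Moreover $\fr q_\mu(\mu)\le\card{{}^\mu([\mu]^{<\mu})}=2^\mu$, since the whole set is a witness via $f(\gamma)=\ac{\min(\mu\setminus g(\gamma))}$. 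So you cannot conclude $\card{\Cup_A D_A}<\fr q_\mu(\nu)$, and the $g$ of Step (II) need not exist.

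You also cannot rescue this by taking an antichain of size $\mu^{<\mu}$ and using that $\fr q_\mu(\nu)$ is monotone in $\nu$. By the proof of \cref{thm:homeo between mumu mu and muka mu}, for strong limit $\mu$ the space $(\mucs,\mu)$ has no $\mu^+$ pairwise disjoint open sets. (The paper's ``exactly as in (1)'' for part (2) passes over the same point.)

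The repair is a case split. If $\mu$ is not a strong limit, then $\kappa^{<\mu}=2^{<\mu}=\mu^{<\mu}=\card{\rm{Fn}_\mu(\mu,2)}$, and your argument works as written. If $\mu$ is a strong limit, the inequality is immediate from $\fr q_\mu(\mu)\le 2^\mu=\mu^{<\mu}\le\non(\Mmucsmu)\le\cof(\Mmucsmu)$, using \cref{crl:mulmu Mmucsmu}.

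There is also a smaller slip in your requirement on $h_A$: ``$A_\alpha\cap[s\altappend h_A(s)]=\emp$ whenever $\card{\dom(s)}<\mu_{\alpha+1}$''. For fixed $s$ this holds for a final segment of $\alpha<\kappa$, so with increasing $A_\alpha$ it demands $[s\altappend h_A(s)]\cap A=\emp$. That is impossible if $A$ is dense. Impose it, as the paper does, only for the unique $\alpha$ with $\card{s}\in[\mu_\alpha,\mu_{\alpha+1})$. In Step (III), the zero blocks and continuity of $\abset{\mu_\alpha}{\alpha\in\kappa}$ make $\card{k_\alpha}\to\mu$, so the branch avoids cofinally many $A_\alpha$, hence all of them.

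Likewise, no fixed bound $\lambda<\mu$ on node sizes at a given stage is available or needed. You only need nodes at stages ${<}\kappa$ to have size ${<}\mu$, which holds because $\cf(\mu)=\kappa$.
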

	\begin{proof}
		We will follow the structure and notation of the proof of \cref{dom < cof bd top}, with the following changes: in our proof we have $\card{\cal F}<\fr q_\mu(\nu)$ and $D_A\subset{}^\nu(\sqb\mu^{<\mu})$ for each $A\in\cal F$, thus there is $g\colon \nu\to\sqb\mu^{<\mu}$ such that for each $f\in\Cup_{A\in\cal F}D_A$ there is a $\gamma\in\nu$ with $f(\gamma)\subset g(\gamma)$. Apart from this change, the structure is the same, that is, we show parts (I)--(III).
		
		\textit{(1)} 
		We work in the space $(\mumu,\mu)$, where $\rm{Fn}_\mu(\mu,\mu)$ generates the base and $\nu=\mu^{<\mu}$.  
		
		(I)\quad Define $h_A\colon \rm{Fn}_\mu(\mu,\mu)\to \rm{Fn}_\mu(\mu,\mu)$ so that for any $s\in \rm{Fn}_\mu(\mu,\mu)$ with $\card{s}\in I_\alpha$ we have $A_\alpha\cap\sqb{s\altappend h_A(s)}=\emp$, and we define $f^A_s\in{}^\nu(\sqb\mu^{<\mu})$ for $s\in \rm{Fn}_\mu(\mu,\mu)$ to map $\gamma\mapsto\dom(h_A(s\altappend \sigma_\gamma))$. Finally, note that $\card{\rm{Fn}_\mu(\mu,\mu)}=\nu$ and thus we define $D_A=\set{f_s^A}{s\in \rm{Fn}_\mu(\mu,\mu)}$ and see that $\card{D_A}\leq\nu<\fr q_\mu(\nu)$.
		
		(II)\quad We cannot build a tree, since the elements of $ \rm{Fn}_\mu(\mu,\mu)$ do not have ordinals as domain, but we may still describe $C$ as the set of branches through a \emph{tree-like} structure that we will define recursively from $g$. Let $T_0=\ac\emp$. Given $T_\alpha$, let $t\in T_{\alpha+1}$ if and only if there is $s\in T_\alpha$, $\gamma\in\nu$, $r\in{}^{g(\gamma)}\mu$ and $z\in{}^{\mu_\alpha}\mu$ such that $z\colon \xi\mapsto 0$ for all $\xi<\mu_\alpha$ and $t=s\altappend \sigma_\gamma\altappend r\altappend z$. Once again the purpose of $z$ is two-fold: to ensure that $C$ is nowhere dense and to ensure that each $f\in C$ has domain $\mu$. If $\alpha\leq\kappa$ is limit, let $t\in T_\alpha$ if and only if there are $\abset{s_\xi}{\xi\in\alpha}$ with $s_\xi\in T_\xi$ such that  $t=\Cup_{\xi\in\alpha}s_\xi$. We let $C=T_\kappa$ and claim that $C$ is nowhere dense. 
		
		Let $s\in \rm{Fn}_\mu(\mu,\mu)$ be arbitrary and suppose that $f\in [s]\cap C$. Notice that we can recursively extract sequences $\abset{\gamma_\alpha}{\alpha\in\kappa}$ and $\abset{r_\alpha}{\alpha\in\kappa}$ such that if $s_\alpha\in T_\alpha$ has $s_\alpha\subset f$, then $r_\alpha\in{}^{g(\gamma_\alpha)}\mu$ and $s_\alpha\altappend\sigma_{\gamma_\alpha}\altappend r_\alpha\subset f$. Choose $\alpha$ large enough such that $\card s<\mu_\alpha$ and let $s'=s\cup (s_\alpha\altappend\sigma_{\gamma_\alpha}\altappend r_\alpha)$, then $s'\in \rm{Fn}_\mu(\mu,\mu)$. Let $\abset{\beta_\xi}{\xi\in\mu}$ enumerate $\mu\setminus\dom(s_\alpha\altappend\sigma_{\gamma_\alpha}\altappend r_\alpha)$, then there is some $\xi\in\mu_\alpha$ such that $\beta_{\xi}\notin\dom(s)$. Now let $v\colon \ac \xi\to\ac 1$ and define $s''=s\cup (s_\alpha\altappend\sigma_{\gamma_\alpha}\altappend r_\alpha\altappend v)$, then $s''\in \rm{Fn}_\mu(\mu,\mu)$ and $s''$ is incompatible with every $t\in T_{\alpha+1}$, hence $[s'']\cap C=\emp$, thus $C$ is nowhere dense.
		
		(III)\quad This step is exactly as in \cref{dom < cof bd top}. Note that in our case for each $s=k_\alpha$ there is $\gamma\in\nu$ such that $f^A_s(\gamma)=\dom(h_A(s\altappend \sigma_\gamma))\subset g(\gamma)$, which are elements of $[\mu]^{<\mu}$. By the construction of $T_{\alpha+1}$ there is then $t\in T_{\alpha+1}$ such that $s\altappend\sigma_\gamma\altappend h_A(s\altappend\sigma_\gamma{})\subset t$.
		
		\textit{(2)} This is exactly as in \textit{(1)} above. In this case we work in the space $(\muka,\mu)$, where $\rm{Fn}_\mu(\mu,\kappa)$ generates the base and $\nu=\kappa^{<\mu}$.\qedhere
		
	\end{proof}

	\section*{Acknowledgements}
	
	We would like to thank Hiroshi Sakai for valuable discussions and for filling a gap in \cref{thm:weakly compact homeomorphism} and Nick Chapman for finding some errors in an earlier version.
	The second author also wishes to thank Hiroshi Sakai, J\"org Brendle and Diego Mej\'ia for their hospitality and support during his visits to Japan in 2024 and 2025, and Jakob Kellner and Martin Goldstern for their support in Vienna.

	\printbibliography

@article{AgostiniMottoRosSchlicht23,
	author = {Agostini, Claudio and Motto Ros, Luca and Schlicht, Philipp},
	title = {Generalized Polish spaces at regular uncountable cardinals},
	journal = {J.\ Lond.\ Math.\ Soc.},
	volume = {108},
	number = {5},
	year = {2023},
	pages = {1886–1929},
	publisher = {Wiley},
	url = {http://dx.doi.org/10.1112/jlms.12797},
	doi = {10.1112/jlms.12797},
}

@article{AndrettaMottoRos22,
	author = {Andretta, Alessandro and Motto Ros, Luca},
	title = {Souslin quasi-orders and bi-embeddability of uncountable structures},
	journal = {Mem.\ Am.\ Math.\ Soc.},
	volume = {277},
	number = {1365},
	year = {2022},
	publisher = {American Mathematical Society (AMS)},
	doi = {10.1090/memo/1365},
}

@book{BartoszynskiJudah95,
	author = {Bartoszy{\'n}ski, Tomek and Judah, Haim},
	title = {Set {T}heory: {O}n the {S}tructure of the {R}eal {L}ine},
	year = {1995},
	publisher = {A.K. Peters, Wellesley, MA},
}

@incollection{Blass10,
	author = {Blass, Andreas},
	title = {Combinatorial {C}ardinal {C}haracteristics of the {C}ontinuum},
	editor = {M. Foreman and A. Kanamori},
	booktitle = {Handbook of {S}et {T}heory {V}ol. 1},
	year = {2010},
	pages = {395--489},
	publisher = {Springer, {D}ordrecht},
	doi = {10.1007/978-1-4020-5764-9_7},
}

@article{Brendle22,
	author = {Brendle, J{\"o}rg},
	title = {The higher {C}icho{\'n} diagram in the degenerate case},
	journal = {Tsukuba J.\ Math.},
	volume = {42},
	number = {2},
	year = {2022},
	pages = {255-269},
	publisher = {University of Tsukuba},
	doi = {10.21099/tkbjm/20224602255},
}

@article{BrendleBrookeTaylorFriedmanMontoya18,
	author = {Brendle, J{\"o}rg and Brooke-Taylor, Andrew and Friedman, Sy-David and Montoya, Diana Carolina},
	title = {Cicho{\'n}’s diagram for uncountable cardinals},
	journal = {Isr.\ J.\ Math.},
	volume = {225},
	number = {2},
	year = {2018},
	pages = {959--1010},
	publisher = {Springer},
	doi = {10.1007/s11856-018-1688-y},
}

@article{CardonaMejia25,
	author = {Cardona, Miguel A. and Mej\'ia, Diego A.},
	title = {More about the cofinality and the covering of the ideal of strong measure zero sets},
	journal = {Ann.\ Pure Appl.\ Log.},
	volume = 176,
	number = 4,
	year = 2025,
	eid = 103537,
	doi = {10.1016/j.apal.2024.103537},
}

@book{ComfortNegrepontis74,
	author = {Comfort, William Wistar and Negrepontis, Stylianos},
	title = {The Theory of Ultrafilters},
	volume = {211},
	year = {1974},
	publisher = {Springer},
	doi = {10.1007/978-3-642-65780-1_10},
}

@article{CummingsShelah95,
	author = {Cummings, James and Shelah, Saharon},
	title = {Cardinal invariants above the continuum},
	journal = {Ann.\ Pure Appl.\ Log.},
	volume = {75},
	issue = {3},
	year = {1995},
	pages = {251-268},
	doi = {10.1016/0168-0072(95)00003-Y},
}

@article{DimonteIannellaLucke25,
	author = {Dimonte, V. and Iannella, M. and L\"{u}cke, P.},
	title = {Descriptive properties of I2-embeddings},
	journal = {J.\ Symb.\ Log.},
	year = {2025},
	pages = {1–26},
	publisher = {Cambridge University Press (CUP)},
	url = {http://dx.doi.org/10.1017/jsl.2024.75},
	doi = {10.1017/jsl.2024.75},
}

@misc{DimonteMottoRos25,
	author = {Dimonte, Vincenzo and Motto Ros, Luca},
	title = {Generalized descriptive set theory at singular cardinals of countable cofinality},
	year = {2025},
	eprint = {2511.16188},
	archiveprefix = {arXiv},
}

@misc{DimontePovedaThei24,
	author = {Dimonte, Vincenzo and Poveda, Alejandro and Thei, Sebastiano},
	title = {The {Baire} and perfect set properties at singulars cardinals},
	year = {2024},
	eprint = {2408.05973},
	archivePrefix = {arxiv},
}

@book{FriedmanHyttinenKulikov14,
	author = {Friedman, Sy-David and Hyttinen, Tapani and Kulikov, Vadim},
	title = {Generalized descriptive set theory and classification theory},
	volume = {1081},
	year = {2014},
	publisher = {Providence, RI: American Mathematical Society (AMS)},
	fseries = {Mem.\ Am.\ Math.\ Soc.},
	doi = {10.1090/memo/1081},
}

@article{GartiShelah20,
	author = {Garti, Shimon and Shelah, Saharon},
	title = {Remarks on generalized ultrafilter, dominating and reaping numbers},
	journal = {Fundam.\ Math.},
	volume = {250},
	number = {2},
	year = {2020},
	pages = {101–115},
	publisher = {Institute of Mathematics, Polish Academy of Sciences},
	url = {http://dx.doi.org/10.4064/fm595-9-2019},
	doi = {10.4064/fm595-9-2019},
}

@thesis{Hayashi23,
	author = {Hayashi, Yusuke},
	title = {Cardinal characteristics at singular cardinals},
	year = {2023},
	type = {Master's Thesis},
}

@article{Hayashi26,
	author = {Hayashi, Yusuke},
	title = {Dominating numbers at singular cardinals},
	journal = {Arch.\ Math.\ Log.},
	volume = {65},
	number = {4},
	year = {2026},
	pages = {531--544},
	publisher = {Springer},
	doi = {10.1007/s00153-026-01009-3},
}

@article{HungNegrepontis73,
	author = {Hung, Henry H and Negrepontis, Stelios},
	title = {Spaces homeomorphic to $(2^\alpha)_\alpha$},
	journal = {Bull.\ Am.\ Math.\ Soc.},
	volume = {79},
	number = {1},
	year = {1973},
	pages = {143--146},
	doi = {10.1090/S0002-9904-1973-13132-5},
}

@book{Jech03,
	author = {Jech, Thomas},
	title = {Set {T}heory: {T}hird {M}illennium {E}dition},
	year = {2003},
	publisher = {Springer Monographs in Mathematics},
}

@book{Kunen11,
	author = {Kunen, Kenneth},
	title = {Set Theory},
	volume = {34},
	year = {2011},
	publisher = {College Publications},
}

@book{Kunen80,
	author = {Kunen, Kenneth},
	title = {Set Theory: An Introduction to Independence Proofs},
	volume = {102},
	year = {1980},
	publisher = {North-Holland},
}

@article{LambieHanson23,
	author = {Lambie-Hanson, Chris},
	title = {A {Galvin}--{Hajnal} theorem for generalized cardinal characteristics},
	journal = {Eur. J. Math.},
	volume = {9},
	number = {1},
	year = {2023},
	pages = {21},
	note = {Id/No 12},
	doi = {10.1007/s40879-023-00610-7},
}

@article{Landver92,
	author = {Landver, Avner},
	title = {Baire numbers, uncountable {C}ohen sets and perfect-set forcing},
	journal = {J.\ Symb.\ Log.},
	volume = {57},
	number = {3},
	year = {1992},
	pages = {1086--1107},
	publisher = {Cambridge University Press},
	doi = {10.2307/2275450},
}

@article{Landver93,
	author = {Landver, Avner},
	title = {Finite combinations of {Baire} numbers},
	journal = {Isr.\ J.\ Math.},
	volume = {81},
	number = {3},
	year = {1993},
	pages = {289--296},
	doi = {10.1007/BF02764833},
}

@article{Medini11,
	author = {Medini, Andrea},
	title = {Products and h-homogeneity},
	journal = {Topol.\ Its Appl.},
	volume = {158},
	number = {18},
	year = {2011},
	pages = {2520–2527},
	publisher = {Elsevier BV},
	url = {http://dx.doi.org/10.1016/j.topol.2011.08.011},
	doi = {10.1016/j.topol.2011.08.011},
	doi = {10.1016/j.topol.2011.08.011},
}

@article{MeklerVaananen93,
	author = {Mekler, Alan and V{\"a}{\"a}n{\"a}nen, Jouko},
	title = {Trees and $\Pi^1_1$-subsets of ${}^{\omega_1}\omega_1$},
	journal = {J.\ Symb.\ Log.},
	volume = {58},
	number = {3},
	year = {1993},
	pages = {1052--1070},
	publisher = {Cambridge University Press},
}

@article{Miller82,
	author = {Miller, Arnold W},
	title = {The Baire category theorem and cardinals of countable cofinality},
	journal = {J.\ Symb.\ Log.},
	volume = {47},
	number = {2},
	year = {1982},
	pages = {275--288},
	publisher = {Cambridge University Press},
	doi = {10.2307/2273142},
}

@article{Shelah19,
	author = {Shelah, S.},
	title = {On $\mathfrak{d}_\mu$ for $\mu$ singular},
	journal = {Acta Math.\ Hung.},
	volume = {161},
	number = {1},
	year = {2019},
	pages = {245–256},
	publisher = {Springer Science and Business Media LLC},
	doi = {10.1007/s10474-019-00999-2},
}

@book{Shelah94,
	author = {Shelah, Saharon},
	title = {Cardinal Arithmetic},
	year = {1994},
	publisher = {Oxford University Press},
	doi = {10.1093/oso/9780198537854.001.0001},
}

@article{Sikorski50,
	author = {Sikorski, Roman},
	title = {Remarks on some topological spaces of high power},
	journal = {Fundam.\ Math.},
	volume = {37},
	year = {1950},
	pages = {125--136},
	doi = {10.4064/fm-37-1-125-136},
	url = {https://eudml.org/doc/213209},
}

@article{Stone62,
	author = {Stone, A. H.},
	title = {Non-separable Borel sets},
	journal = {Rozprawy Mat.},
	volume = {28},
	year = {1962},
	url = {https://pldml.icm.edu.pl/pldml/element/bwmeta1.element.zamlynska-cbe30d40-7f84-46e7-9066-30f1f502ae47},
}

@article{Terada93,
	author = {Terada, T.},
	title = {Spaces whose all nonempty clopen subspaces are homeomorphic},
	journal = {Yokohama Math. J.},
	volume = {40},
	year = {1993},
	pages = {87--93},
}
	
\end{document}